\numberwithin{equation}{section}
\let\oldmarginpar\marginpar
\renewcommand\marginpar[1]{\-\oldmarginpar[\raggedleft\footnotesize #1]%
	{\raggedright\footnotesize\color{red} #1}} 
\newtheorem{theorem}{Theorem}[section]
\newtheorem{corollary}[theorem]{Corollary}
\newtheorem{lemma}[theorem]{Lemma}
\newtheorem{proposition}[theorem]{Proposition}
\newtheorem{remark}[theorem]{Remark}
\newenvironment{proof}[1][Proof]{\textbf{#1.} }{\hfill\rule{0.5em}{0.5em}}
{\catcode`\@=11\global\let\AddToReset=\@addtoreset
	\AddToReset{equation}{section}
	
	\AddToReset{theorem}{section}

		\title{The Peskin problem with $\dot B^1_{\infty,\infty}$ initial data}
\author{
	{\bf Ke Chen,\thanks{E-mail address: kchen18@fudan.edu.cn, Fudan University, 220 Handan Road, Yangpu, Shanghai, 200433, China.}~~Quoc-Hung Nguyen\thanks{E-mail address: qhnguyen@amss.ac.cn, Academy of Mathematics and Systems Science, Chinese Academy of Sciences, Beijing, 100190, China.}}}}
\begin{document}
		\maketitle
		\begin{abstract}
			In this paper we study the Peskin problem in 2D, which describes the dynamics of a 1D closed elastic structure immersed in a steady Stokes flow. We prove the local well-posedness for arbitrary initial configuration in $(C^2)^{\dot B^1_{\infty,\infty}}$ satisfying the well-stretched condition, and the global well-posedness when the initial configuration is sufficiently close to an equilibrium in $\dot B^1_{\infty,\infty}$. Here $(C^2)^{\dot B^1_{\infty,\infty}}$ is the closure of $C^2$ in  the Besov space $\dot B^1_{\infty,\infty}$. The global-in-time solution will converge to an equilibrium exponentially as $t\rightarrow+\infty$. This is the first well-posedness result for the Peskin problem with non-Lipschitz initial data. 
		\end{abstract}
		\section{Introduction and main results}
		Fluid structure interaction (FSI) problems in which a deformable structure interacts with a surrounding fluid are found in many areas of science and engineering. In this paper, we consider the problem of an elastic filament immersed in a two dimensional Stokes fluid. It is inspired by the numerical immersed boundary method introduced by Peskin \cite{PeskinFlow1972,Peskin1972Thesis} to study the flow patterens around heart valves.  The numerical study for such FSI problems has attracted a lot of interests, which gives birth to wide applications in physics, biology and medical sciences\cite{PeskinImmersed2002,MittalImmersed2005,HouNumerical2012}. The Peskin problem is named after Peskin in honor of his seminal contributions.

		Let $\Gamma$ be a simple closed curve which partitions $\mathbb{R}^2$ into two regions, the interior of the curve, $\Omega^+$ and the exterior $\Omega^-=\mathbb{R}^2\backslash\Omega^+$.
		Let $\Gamma$ be parameterized by vector valued function $X(t,s)=(X_1(t,s),X_2(t,s))\in\mathbb{R}^2$. Here $s\in\mathbb{T}:=\mathbb{R}/(2\pi\mathbb{Z})$ is the material coordinate and $t\geq 0$ denotes time. For fixed $s$, $X(t,s)$ moves with the local fluid velocity. Suppose further that the elastic structure has force density $F(X(t,s))$ with the form 
		\[
		F(X)=\partial_s(T(|\partial_s X|)\tau(X)),
		\]
		where $T$ is the tension and $\tau(X)=\frac{\partial_s X}{|\partial_s X|}$ is the unit tangent of the boundary $\Gamma$. Denote $u$ the fluid velocity and $p$ the pressure. The Peskin problem reads
		\begin{align*}
			\begin{cases}
				-\Delta u =-\nabla p~~~~~&\text{in} ~\mathbb{R}^2\backslash\Gamma(t),  \\
				\nabla \cdot u =0 ~~~~\quad\quad&\text{in} ~\mathbb{R}^2\backslash\Gamma(t),\\
				\llbracket u \rrbracket =0 ~~~~\quad\quad&\text{on} ~\Gamma(t),\\
				\llbracket\left(\nabla u+\nabla u^{T}-p \mathrm{Id}\right) n\rrbracket=\frac{F(X)}{|\partial_s X|}~~~&\text{on}~\Gamma(t),\\
				\partial_tX=u~~~~\quad\quad&\text{on}~ \Gamma(t).
			\end{cases}	
		\end{align*}
		Here $n$ is the outward unit normal to the free boundary $\Gamma(t)$ and $\llbracket\cdot\rrbracket$ denotes the jump across $\Gamma$:
		\[\llbracket U\rrbracket=U^+-U^-,\]
		where $U^{\pm}$ denotes the limiting value of $U$ evaluated on $\Gamma$ from the $\Omega^\pm$ side.
		
		Consider the particular case where each infinitesimal segment of the filament behaves like a Hookean spring with elasticity coefficient equal to 1, we have $T(x)=x$ and the force density can be written as $F(X)=\partial_s^2 X$.
		In this case the Peskin problem can be equivalently written as the following contour equations\cite{LinTongSolvability2019,MoriWell2019}
		\begin{align}
			\partial_{t} X(t,s) &=\int_{\mathbb{T}} \mathbf{G}(X(t,s)-X(t,\sigma)) \partial_s^2X(t,\sigma) \mathrm{d} \sigma, \label{peskin}\\
			\mathbf{G}(x)&=\frac{1}{4\pi}\left(-\log |x|         \mathrm{Id}+\frac{x\otimes x}{|x|^2}\right),\quad \quad x=(x_1,x_2)\in \mathbb{R}^2\backslash\{0\},\nonumber
		\end{align}
		where $\mathbf{G}$ is the fundamental solution of the $2D$ Stokes problem. It is easy to check that if $X(t,s)$ is a solution, then for any $\lambda>0$, $X_\lambda(t,s)=\lambda^{-1}X(\lambda t,\lambda s)$ is also a solution. Under this scaling, $\dot W^{1,\infty}, BMO^1$ and $\dot H^\frac{3}{2}$ are  critial spaces.
		
		The analytical study of the Peskin problem was initiated in \cite{LinTongSolvability2019,MoriWell2019}. 
		Lin and Tong \cite{LinTongSolvability2019} proved the local well-posedness for arbitrary $H^\frac{5}{2}$ data. Their proof relies on energy arguments and an application of the Schauder fixed point theorem. They also proved the global existence result and exponential decay towards equilibrium when the initial configuration is sufficiently close to the equilibrium. Tong \cite{TongRegularized2021} also established global well-posedness of a regularized Peskin problem and proved the convergence as the regularization parameter diminishes. Mori, Rodenberg and Spirn \cite{MoriWell2019} extended the results in \cite{LinTongSolvability2019}, they established a local well-posedness result for initial data in $C^{1,\gamma}$ with $\gamma>0$ (see also \cite{Rodenberg2018}).
		These spaces are subcritical under the scaling of the Peskin equation. For the well-posedness in critical spaces, Garcia-Juarez, Mori and Strain \cite{GarciaViscosityContrast2020} proved the global well-posedness result with initial data in the Wiener space $\mathcal{F}^{1,1}$ and sufficiently close to the stationary states. Their result holds for two fluids with different viscosity. More recently, Gancedo, Belinch\'{o}n and Scrobogna \cite{GancedoGlobal2020} considered a toy model of the Peskin problem and proved a global existence result in the critical Lipschitz space.
		
		There are also a lot of analytical studies on FSI problems considering an elastic structure interacts with a fluid (see \cite{Cheng2007Navier,Cheng2010The,AmSie,AmLiu,Li2020Stability}). The Peskin problem is essentially simpler than other FSI models mentioned above. It is interesting to study the behavior of the Peskin problem and to consider whether the results can be extended to more complicated models.
		
		The Peskin problem has many similarities with the Muskat problem.  The Muskat equation models the evolution of the interface between two different fluids in porous media whose dynamics are governed by Darcy's law.
		The Muskat equation in 2 dimension reads 
		\begin{align*}
			\partial_tz(s)=\frac{1}{\pi}\int_\mathbb{R}\frac{( z_1(s)-z_1(s-\alpha))}{|z(s)-z(s-\alpha)|^2}(\partial_sz(s)-\partial_s z(s-\alpha))d\alpha,
		\end{align*}
		where $z(s)=(z_1(s),z_2(s))$ is the interface. 
		The analysis of the Muskat equation can be traced back to the work of C\'{o}rdoba, C\'{o}rdoba and Gancedo\cite{Cordoba2011}, which proved the local existence in $H^k$ ($k\geq 3$) under the Rayleigh-Taylor condition and the arc-chord condition. See also \cite{1Constantin2010,1Gancedo2020Global,Alazard-Lazar,1HuyNguyen2020,1Matioc2018,HuyNguyen2021} for further developments on this problem. There are a large amount of studies (see \cite{ThomasHfirst,TH4,KeC1,DengLei2017,Cameron2019,Cameron2020,Cordoba-Lazar-H3/2} and references therein) considering the Muskat equation in the graph case (i.e. $z(s)=(s,f(s))$), which can be written as  
		\begin{align*}
			\partial_tf(s)=\frac{1}{\pi}\int_\mathbb{R}\frac{\alpha\partial_s\delta_\alpha f(s)}{\alpha^2+(\delta_\alpha f(s))^2}{d\alpha},
		\end{align*}
		where we denote $\delta_\alpha f(s)=f(s)-f(s-\alpha)$.
		We can rewrite the equation as 
		\begin{align*}
			\partial_t f(s)+\frac{\Lambda f(s)}{1+(f'(s))^2}=F(f(s)),
		\end{align*}
		where $F(f)$ is the remainder nonlinear term and $\Lambda=(-\Delta)^\frac{1}{2}$ is the fractional Laplacian in $\mathbb{R}$. The main part of the Muskat equation is nonlinear and degenerate when the initial data is not Lipschitz, which makes the problem more difficult (see \cite{Alazard2020endpoint,Alazard2020} for more discussion). Up to now, the well-posedness of the Muskat equation in $BMO^1$ is still open.
		%
		On the other hand, the Peskin problem reads 
		\begin{align*}
			\partial_t X+\frac{1}{4}\Lambda X+\frac{1}{4}\left(\begin{array}{cc}
				0 & -\mathcal{H} \\
				\mathcal{H} & 0
			\end{array}\right) X=N(X),
		\end{align*}
		where $\mathcal{H}$ is the Hilbert transform and $N(X)$ denotes remainder term. Fortunately,
		the main part is linear and non-degenerate, which makes it possible to establish the well-posedness in $\dot B^1_{\infty,\infty}$. We introduce the main ideas of this paper in the following.
		
		The main difficulty is to choose a function space to work in.	To solve this problem, we consider the following toy model:
		\begin{align}\label{toymodel}
			\partial_t f(t,s)+\frac{1}{4}\Lambda f(t,s)=|\Lambda^\sigma f(t,s)|^\frac{1}{\sigma}, \quad0<\sigma<1,
		\end{align}
	 Here we denote $\Lambda^\sigma=(-\Delta)^\frac{\sigma}{2}$. Note that $f$ will be like $\partial_s X$ in the Peskin problem. The solution of the above model has the formula $$
		f(t,s)=\int K(t,s-s')f_0(s')ds'+\int_0^t\int K(t-\tau,s-s')|\Lambda^\sigma f(\tau,s')|^\frac{1}{\sigma}ds'd\tau,$$
		where $K$ is the kernel associate to $\partial_t+\frac{1}{4}\Lambda$ (see Section 2 for more discussion). 
		By classical regularity argument, to control the nonlinear part of the solution, one needs $\|\Lambda^b f\|_{L_T^\frac{1}{b}L^\infty}<\infty$ for some $b\in [\sigma,1)$. However, for any $m\in\mathbb{Z}^+,~b\in(0,1),$ there holds
		$$
		\|\partial_s^mK(t,\cdot)\|_{L^1}\sim t^{-m}\quad \text{and}\quad\|\Lambda^b K(t,\cdot)\|_{L^1}\sim t^{-b}.$$
		Generally speaking, $\|\Lambda^b(K\ast f_0)\|_{L_T^\frac{1}{b}L^\infty}$ is not finite for $f_0\in L^\infty$ (even for  $f_0\in C^0$). To fix this, we observe that 
		$$
		\|\delta_\alpha \Lambda^{b-\varepsilon'}K(t,\cdot)\|_{L^1}\lesssim\min\left\{1,|\alpha|t^{-1}\right\}t^{-(b-\varepsilon')},
		$$
		for $0<\varepsilon'<\frac{b}{2}$. Moreover, there holds
		$$
		\left\|\min\{1,|\alpha|t^{-1}\}t^{-(b-\varepsilon')}\right\|_{L_T^\frac{1}{b}}\lesssim |\alpha|^{\varepsilon'},$$
		which implies
		$$
		\sup_\alpha \frac{\left\|\delta_\alpha \Lambda^{b-\varepsilon'} (K\ast f_0) \right\|_{L_T^\frac{1}{b}L^\infty}}{|\alpha|^{\varepsilon'}}\lesssim ||f_0||_{\dot B_{\infty,\infty}^0}.
		$$
Here $\dot B^0_{\infty,\infty}$ is the Besov space with index $(0, \infty,\infty)$.	We will explain more details of this estimate in Lemma \ref{lemequiv}. This motivates us to define a new norm in which we move the derivative in space outside the integration in time.  More precisely, we introduce a space $\mathcal{G}_T$ of functions in $[0,T]\times\mathbb{R}$ with norm
		\begin{equation*}
			\|h\|_{\mathcal{G}_T}= \sup_{0\leq \mu\leq \frac{2}{3}\atop 2\varepsilon'\leq b\leq\theta-\mu-\varepsilon' }\sup_{\alpha\in\mathbb{R}}\frac{\|t^\mu\delta_\alpha \Lambda^{b-\varepsilon'} h\|_{L_T^\frac{1}{b}L^\infty}}{|\alpha|^{\mu+\varepsilon'}},
		\end{equation*}
		where $\theta$ is a constant close to $1$ and $\varepsilon'\ll 1-\theta$. For any $T>0$, we also define a space $\tilde{\mathcal{G}}_T$ of functions in $\mathbb{R}$ with norm 
		$$
		\|g\|_{\tilde {\mathcal{G}}_T}=\|K(t,\cdot)\ast g\|_{\mathcal{G}_{T}}.
		$$
		We denote $\tilde {\mathcal{G}}=\tilde {\mathcal{G}}_{+\infty}$ for simplicity. We prove that 
	\begin{equation*}
		\tilde {\mathcal{G}}=\dot B^{0}_{\infty,\infty},
	\end{equation*}
in Lemma \ref{lemequiv}. We say $h\in \mathcal{G}_T^1$, $g\in \tilde{\mathcal{G}}_T^1$ if $h'\in \mathcal{G}_T$, $g'\in \tilde{\mathcal{G}}_T$
		respectively.
	We note that $\dot B^{0}_{\infty,\infty}$ is a critical space of the toy model \eqref{toymodel}, and 
		$\dot B^{1}_{\infty,\infty}$ is a critical space of the Peskin problem \eqref{peskin}.
		
		The known results of the Peskin problem are established under the so-called well-stretched assumption, which means  that 
		\begin{equation}\label{kap}
			\kappa(X_0)=\sup_{s_1\neq s_2}\frac{|s_1-s_2|}{|X_0(s_1)-X_0(s_2)|}<+\infty,
		\end{equation}
		where $|s_1-s_2|=\inf_{k\in \mathbb{Z}}|s_1-s_2-2k\pi|$ is the distance between $s_1$ and $s_2$ on the torus.
		In critical spaces, it is most difficult to prove the propogation of the well-stretched condition. To overcome this, we introduce a quantity 
		\begin{align}\label{defQ}	Q_h(T)=\sup_{t\in[0,T]}\sup_{\alpha,s\in(-\pi,\pi)}\left(\frac{|\alpha|^{\varepsilon'}}{|t|^{\varepsilon'}}\left|\frac{1}{|\Delta_\alpha h(t,s)|}-\frac{1}{|\Delta_\alpha h(0,s)|}\right|\right),
		\end{align}
		where  $\Delta_\alpha h$ is a slope defined in \eqref{Deltadef} and $\varepsilon'$ is a small positive constant. In fact, if we have $Q_X(T)$ finite and $X(T)\in C^{1+\varepsilon_0}$ at time $T$ for $\varepsilon_0>\varepsilon'$, then $X$ satisfies the well-stretched condition at time $T$ (see Lemma \ref{lemkappa}).\medskip\\
		
		We organize the paper as follows: In the remaining part of this section, we reformulate the problem and state the main results of the paper. In Section 2 we introduce some preliminary lemmas. We establish the regularity theory for the nonlinear parabolic equation in Section 3. Applying the results in Section 3, we estimate the nonlinear terms in Section 4. Finally, we finish the proof of the main theorems in Section 5.
		\subsection{Formulation}
		To simplify the notation, we suppress the time variable and denote 
		\begin{align}\label{Deltadef}
			& \Delta_\alpha X(s)=\frac{\delta_\alpha X(s)}{\alpha}, \quad\quad	\tilde\Delta_\alpha X(s)=\frac{\delta_\alpha X(s)}{\tilde\alpha}
			,\\&
			E^\alpha X(s)=X'(s-\alpha)-\tilde\Delta_\alpha X(s),\nonumber
		\end{align}
		where  $\delta_\alpha X(s)=X(s)-X(s-\alpha)$ and  $\tilde\alpha=\left(\frac{1}{2}\cot\left(\frac{\alpha}{2}\right)\right)^{-1}$.  Note that 
		$$
		\frac{1}{2}\cot\left(\frac{\alpha}{2}\right)=\frac{1}{\alpha}+\sum_{n=1}^\infty \left(\frac{1}{\alpha+2n\pi}+\frac{1}{\alpha-2n\pi}\right).
		$$
		Hence for any periodic function $f:\mathbb{T}\rightarrow\mathbb{R}$, there holds
		\begin{align}\label{TR}
			\int_{\mathbb{T}}f(\alpha)\frac{d\alpha}{\tilde \alpha}=\int _\mathbb{R} f(\alpha) \frac{d\alpha}{\alpha}.
		\end{align}
		The Hilbert transform of $f$ is defined as 
		\begin{align*}
			\mathcal{H} f (s)=\frac{1}{2\pi}\int_{\mathbb{T}} \cot \left(\frac{\alpha}{2}\right)f(s-\alpha)d\alpha= \frac{1}{\pi}\int_{\mathbb{R}} f(s-\alpha)\frac{d\alpha}{\alpha}.
		\end{align*}
		We  introduce the fractional Laplacian operator $\Lambda$ defined by $$
		\Lambda f(s)=\mathcal{H} f'(s)=\frac{1}{\pi}\int_{\mathbb{T}}\frac{\delta_\alpha f(s)}{4\sin^2(\alpha/2)}d\alpha.$$
		It is easy to check that $\widehat{\Lambda f}(\xi)=|\xi|\hat{f}(\xi)$.
		For any $\sigma\in(0,1)$, we also define the operator $\Lambda^\sigma$ by
		\begin{equation}\label{deffraclap}
			\begin{aligned}
				\widehat{\Lambda^\sigma f}(\xi)=|\xi|^\sigma\hat{f}(\xi).
			\end{aligned}
		\end{equation}
		There holds $\Lambda^\sigma f=C_\sigma \int_\mathbb{R}\frac{\delta_\alpha f}{|\alpha|^{1+\sigma}}d\alpha$. 
		By a change of variable and integration by parts in \eqref{peskin} we get 
		$$
		\partial_t X(s)=\int_{\mathbb{T}} \partial_\alpha \mathbf{G}(\delta_\alpha X(s))X'(s-\alpha )d\alpha=-\int_{\mathbb{T}} \partial_\alpha \mathbf{G}(\delta_\alpha X(s))\delta_\alpha X'(s)d\alpha,
		$$
		where we used the fact that $\int \partial_\alpha \mathbf{G}(\delta_\alpha X(s))d\alpha=0$. Further computation leads to
		\begin{align*}
			\partial_t X(s)
			&=\frac{1}{4\pi}\int_{\mathbb{T}}  \frac{\delta_\alpha X(s)\cdot X'(s-\alpha)}{|\delta_\alpha X(s)|^2}\delta_\alpha X'(s)d\alpha\\
			&\quad\quad\quad-\frac{1}{4\pi}\int_{\mathbb{T}}\frac{ X'(s-\alpha)\otimes \delta_\alpha X(s)+\delta_\alpha X(s)\otimes X'(s-\alpha)}{|\delta_\alpha X(s)|^2} \delta_\alpha X'(s)d\alpha\\
			&\quad\quad\quad+\frac{1}{2\pi}\int_{\mathbb{T}}\frac{\delta_\alpha X(s)\otimes \delta_\alpha X(s)}{|\delta_\alpha X(s)|^4}(\delta_\alpha X(s)\cdot X'(s-\alpha)) \delta_\alpha X'(s)d\alpha.
		\end{align*}
		Note that when $|\alpha|\ll 1$, one has 
		$$
		\frac{\delta_\alpha X(s)\cdot X'(s-\alpha)}{|\delta_\alpha X(s)|^2}\sim \frac{1}{2}\cot\left(\frac{\alpha}{2}\right).$$
		This motivates us to extract a Hilbert transform from the first term and use cancellations between the second and the last term. More precisely, one has the formula
		\begin{align}\label{eqpeskin}
			\partial_t X(s)+\frac{1}{4}\Lambda X(s)=N(X(s)),
		\end{align}
		where
		\begin{align*}
			N(X)
			&=\frac{1}{4\pi}\int_{\mathbb{R}}  \frac{\tilde\Delta_\alpha X\cdot E^\alpha X}{|\tilde\Delta_\alpha X|^2}\delta_\alpha X'\frac{d\alpha}{\alpha}-\frac{1}{4\pi}\int_{\mathbb{R}}\frac{ E^\alpha X\otimes \tilde\Delta_\alpha X+\tilde\Delta_\alpha X\otimes E^\alpha X}{|\tilde\Delta_\alpha X|^2} \delta_\alpha X'\frac{d\alpha}{\alpha}\\
			&\quad\quad\quad\quad+\frac{1}{2\pi}\int_{\mathbb{R}} \frac{\tilde\Delta_\alpha X\otimes \tilde\Delta_\alpha X}{|\tilde\Delta_\alpha X|^4}\left(\tilde\Delta_\alpha X\cdot E^\alpha X\right) \delta_\alpha X'\frac{d\alpha}{\alpha}.
		\end{align*}
		We also used \eqref{TR} to transfer the integral on $\mathbb{T}$ to $\mathbb{R}$.
		Note that without specified, all the integrals in the rest of the paper should be understood as  principal value integrals
		over $\mathbb{R}$. 	For simplicity in later estimates, we write the nonlinear terms as
		\begin{equation}\label{Ne}
			N(X(s))=\sum\int H(\tilde\Delta_\alpha X(s)) E^\alpha X_i(s)  \delta_\alpha X_j'(s)\frac{d\alpha}{\alpha}.
		\end{equation}
		where the sum is for some $i,j=1,2$ and $H(x)=\frac{x_{i_1}x_{i_2}x_{i_3}}{|x|^4}$, $i_1,i_2,i_3=1,2$. Moreover, it is easy to check that 
		\begin{equation}\label{integral0}
			\sum\int_{\mathbb{R}}H(\tilde\Delta_\alpha X(s)) E^\alpha X_i(s)\frac{d\alpha}{\alpha}=\int_{-\pi}^\pi \left(\partial_\alpha \mathbf{G}(\delta_\alpha X(s))-\frac{1}{2}\cot \frac{\alpha }{2}\right)d\alpha=0.
		\end{equation}
		We fix two constants in our proof 
		\begin{align*}
			\theta=1-10^{-10^{10}} ,\quad\quad\quad\varepsilon'=10^{-10}(1-\theta).
		\end{align*}
		We also introduce some notations that will be used throughout the paper. We use the notation $a\lesssim b$, which means that there exists an absolute constant $C>0$ such that $a\leq Cb$. With a slight abuse of notation, the value of the absolute constant $C$ may be different from line to line. The mixed norm $\|\cdot\|_{L^p_TL^q}$ means first take $L^q$ norm in space variable $x\in\mathbb{R}$ and then take $L^p$ norm in time variable $t\in[0,T].$
		\subsection{Formulation near the steady state}\label{Formulation near the steady state}
		%
		It is easy to see that the Peskin problem has translation, rotation and dilation invariance. Moreover,  the only stationary mild solutions of the Peskin problem are circles in which the material points are evenly spaced\cite{LinTongSolvability2019,MoriWell2019}:
		$$
		Z(s)=A e_r+Be_t+C_1e_x+C_2e_y,~~~A^2+B^2>0,
		$$
		where 
		$$
		e_{\mathrm{r}}=\left(\begin{array}{c}
			\cos (s) \\
			\sin (s)
		\end{array}\right),  {e}_{\mathrm{t}}=\left(\begin{array}{c}
			-\sin (s) \\
			\cos (s)
		\end{array}\right),  {e}_{x}=\left(\begin{array}{l}
			1 \\
			0
		\end{array}\right),  {e}_{y}=\left(\begin{array}{l}
			0 \\
			1
		\end{array}\right).
		$$
		For later reference, we denote $ \tilde{\mathcal{V}}$ the above set of circular equilibria and $\mathcal{V}$ the linear space spanned by the above 4 basis vectors.
		To state our results, we first introduce some notations. For $U,W\in L^2(\mathbb{T},\mathbb{R}^2)$, we define the standard $L^2$ inner product as: 
		$$
		\langle U,W\rangle:=\int_{\mathbb{T}} U(s)\cdot W(s)ds.$$
		Let $\mathcal{P}$ be the $L^2$ projection on to the space $\mathcal{V}$ and $\Pi$ its complementary projection:
		$$
		\mathcal{P}  {X}=\frac{1}{2 \pi} \sum_{\ell=\mathrm{r}, \mathrm{t}, x, y}\left\langle {X},  {e}_{\ell}\right\rangle  {e}_{\ell},\quad\quad \Pi  {X}= {X}-\mathcal{P}  {X}.$$
		We linearize the equation around stationary solutions. The linearized operator of the equation \eqref{eqpeskin} at $Z\in\tilde{\mathcal{V}}$ is given by
		\begin{align}\label{deflinearize}
			\mathcal{L}_ZX=\left.\frac{d}{d\epsilon}\left(\frac{1}{4}\Lambda(Z+\epsilon X)-N(Z+\epsilon X)\right)\right|_{\epsilon=0}=\frac{1}{4}\Lambda X-\mathfrak{D} N (Z)X,
		\end{align}where we denote $\mathfrak{D} N (Z)X=\left.\frac{d}{d\epsilon}N(Z+\epsilon X)\right|_{\epsilon=0}$. 
		It is easy to check that the linearized operator has translation and dilation invariance. Moreover, denote $\mathcal{O}_s=\left(\begin{array}{cc}
			\cos (s) & \sin (s) \\
			-\sin (s) & \cos (s)
		\end{array}\right)$. Let $\bar e_r= \mathcal{O}_{s_0}e_r$, one has 
		$$
		\mathcal{L}_{\bar e_r}=\mathcal{O}_{s_0}	\mathcal{L}_{e_r}\mathcal{O}_{s_0}^T.
		$$
		For simplicity, denote $\mathcal{L}=\mathcal{L}_{e_r}$.
		We can check that 
		$$
		\mathcal{L}  {w}=\frac{1}{4}\Lambda  {w}+\frac{1}{4}\left(\begin{array}{cc}
			0 & -\mathcal{H} \\
			\mathcal{H} & 0
		\end{array}\right)  {w}.$$
		Note that $\mathcal{O}_{s_0}	\mathcal{L}\mathcal{O}_{s_0}^T=\mathcal{L}$. Hence the linearized operator has rotation, translation and dilation invariance. More precisely, there holds
		$$
		\mathcal{L}_Z=\mathcal{L}_{e_r}=\mathcal{L}, ~~~~\text{for any} ~~Z\in \tilde{\mathcal{V}}.$$
		Consider the equation
		$$
		\partial_{t}w+	\frac{1}{4}\Lambda  {w}+\frac{1}{4}\left(\begin{array}{cc}
			0 & -\mathcal{H} \\
			\mathcal{H} & 0
		\end{array}\right)  {w} =F\in \mathbb{R}^2.$$
		We can write the equation in terms of Fourier series
		\begin{align*}
			&\partial_{t}\hat w_{1,n}+\frac{1}{4}|n| \hat w_{1,n}-\frac{1}{4}i\operatorname{sgn}(n)\hat w_{2,n}=\hat F_{1,n}, \\	&\partial_{t}\hat w_{2,n}+\frac{1}{4}|n| \hat w_{2,n}+\frac{1}{4}i\operatorname{sgn}(n)\hat w_{1,n}=\hat F_{2,n}.
		\end{align*}
		Let $v(s)= \mathcal{O}_s\Pi w(s)$. Then for any $n\in\mathbb{Z}\backslash\{0\}$ one has
		\begin{align*}
			&\hat v_{1,n}=\frac{\hat w_{1,n-1}+\hat w_{1,n+1}}{2}+\frac{\hat w_{2,n-1}-\hat w_{2,n+1}}{2i},\\
			&\hat v_{2,n}=\frac{\hat w_{1,n+1}-\hat w_{1,n-1}}{2i}+\frac{\hat w_{2,n-1}+\hat w_{2,n+1}}{2}.
		\end{align*}
		It is easy to check that 
		\begin{align*}
			\partial_t \hat v_{1,n}+\frac{1}{4}|n|\hat v_{1,n}=\frac{1}{2}(\hat F_{1,n-1}+\hat F_{1,n+1})+\frac{1}{2i}(\hat F_{2,n-1}-\hat F_{2,n+1}),\\
			\partial_t \hat v_{2,n}+\frac{1}{4}|n|\hat v_{2,n}=\frac{1}{2i}(\hat F_{1,n+1}-\hat F_{1,n-1})+\frac{1}{2}(\hat F_{2,n-1}+\hat F_{2,n+1}),
		\end{align*}
		which is equivalent to 
		\begin{align}\label{kkkkkernel}
			\partial_t v+\frac{1}{4}\Lambda v=\mathcal{O}_s \Pi F.
		\end{align} 
		Hence we obtain
		$$
		\mathcal{L}=\frac{1}{4}\mathcal{O}_s^{-1}\Lambda \mathcal{O}_s\Pi=\frac{1}{4}\Pi \mathcal{O}_s^{-1}\Lambda \mathcal{O}_s.
		$$
		From above we directly obtain
		\begin{align}\label{proptyL}
			\mathcal{P}\mathcal{L}=0,\quad\text{and}\quad\Pi \mathcal{L}=\mathcal{L}.
		\end{align}
		We can rewrite the Peskin equation \eqref{eqpeskin} as 
		\begin{align}\label{eqlinearize}
			\partial_t X +\mathcal{L} X= \mathfrak{N}(X),\quad  \mathfrak{N}(X) =N(X)+\mathcal{L} X- \frac{1}{4}\Lambda X.
		\end{align}
		For any stationary solution $W\in \tilde{\mathcal{V}}$, one has 
		\begin{align}\label{haha1}
			0=-\mathcal{L}W+\mathfrak{N}(W)=\mathfrak{N}(W).
		\end{align}
		Moreover, 
		by the definition \eqref{deflinearize} and \eqref{eqlinearize} we have for any $W\in \tilde{\mathcal{V}}$ and any $U$
		\begin{align}\label{haha2}
			\mathfrak{DN}[ {W}]  {U}=\mathfrak{D} N[ {W}]  {U}+\mathcal{L}  {U}-\frac{1}{4}\Lambda  {U}=0.
		\end{align}
		Let $X$ be a solution of \eqref{eqlinearize}.	Denote $Y=\Pi X$ and $Z=\mathcal{P}X$, then \eqref{proptyL} leads to 
		\begin{equation}\label{eqglo}
			\begin{aligned}
				\partial_t Y+\mathcal{L}Y&= \Pi\mathfrak{N}(Y+Z),\\
				\partial_t Z&=\mathcal{P} \mathfrak{N}(Y+Z).
			\end{aligned}
		\end{equation}
		
		\subsection{Main results}
		For any vector valued function $f(t,s)$, denote $$\kappa_f(t)=\sup_{\tau\in(0,t)}\kappa(f(\tau,\cdot)),$$ where $\kappa$ is defined in \eqref{kap}.  For simplicity, let  $
		\kappa_0=\liminf_{\vartheta\rightarrow 0}\kappa(X_0\ast\rho_\vartheta)
		$, where $\rho_\vartheta$ is the standard mollifier. We also denote $\kappa(t)=\kappa_X(t)$.
		We state the main results as follows.
	\begin{theorem}\label{thmlocal}(Local existence) For any $r>0$, there exists $\xi_0=\xi_0(r)>0$ such that for
	 any initial data $X_0\in L^\infty$ with $\kappa_0\leq r$, if $\|X_0'\|_{\tilde {\mathcal{G}}_{T^*}}\leq \xi_0$ for some $T^*\in(0,1)$, then  the Cauchy problem of \eqref{peskin} has a solution $X\in C([0,T^*];L^\infty)$ satisfying 
		\begin{align*}
			\|X'\|_{\mathcal{G}_{T^*}}\leq2 \xi_0,\quad\quad \kappa(T^*)\leq 2\kappa_0.
		\end{align*}
		Moreover, we have $\sup_{0<t\leq T^*} t^k\|X'(t)\|_{\dot C^k}\leq C_k\xi_0$ for any $k\in\mathbb{Z}^+$.
	\end{theorem}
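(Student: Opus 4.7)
I would run a Picard--contraction argument on the mild formulation
\begin{align*}
X(t,s) = \bigl(K(t,\cdot)\ast X_0\bigr)(s) + \int_0^t \bigl(K(t-\tau,\cdot)\ast N(X(\tau,\cdot))\bigr)(s)\,d\tau
\end{align*}
arising from \eqref{eqpeskin}, where $K$ is the kernel of $\partial_t + \tfrac{1}{4}\Lambda$. The ball to work in is
\begin{align*}
B_{T^*} = \bigl\{\,X \in C([0,T^*];L^\infty)\,:\, \|X'\|_{\mathcal{G}_{T^*}} \leq 2\xi_0,\ Q_X(T^*) \leq C_\ast,\ \kappa_X(T^*) \leq 2r\,\bigr\},
\end{align*}
for a large constant $C_\ast=C_\ast(r)$ and a small $\xi_0=\xi_0(r)$ chosen at the end. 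The linear contribution is free: $\|(K\ast X_0)'\|_{\mathcal{G}_{T^*}} = \|X_0'\|_{\tilde{\mathcal{G}}_{T^*}} \leq \xi_0$ by the very definition of $\tilde{\mathcal{G}}_{T^*}$.

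For the nonlinear part I would exploit the trilinear decomposition \eqref{Ne}. Inside $B_{T^*}$ the coefficient $H(\tilde\Delta_\alpha X)$ is pointwise bounded by a power of $\kappa_X(T^*)\leq 2r$, the factor $\delta_\alpha X'$ is a difference of $X'$, and the factor $E^\alpha X_i$ carries the key cancellation \eqref{integral0}, which supplies an extra derivative in $\alpha$ and lets us absorb the singular kernel $d\alpha/\alpha$. Substituting these into the regularity theory for $\partial_t + \tfrac{1}{4}\Lambda$ to be developed in Section~3, and using the finite-difference characterization of $\mathcal{G}_{T^*}$ (moving the $\Lambda^{b-\varepsilon'}$ outside the time integral as in the estimate on $K\ast f_0$ described in the introduction), I expect to obtain a bound of the form
\begin{align*}
\bigl\|(K\ast N(X))'\bigr\|_{\mathcal{G}_{T^*}} \leq C(r)\,\|X'\|_{\mathcal{G}_{T^*}}^{\,2},
\end{align*}
which closes the a priori bound $\|X'\|_{\mathcal{G}_{T^*}}\leq 2\xi_0$ once $\xi_0\leq (4C(r))^{-1}$.

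The main obstacle is the propagation of the well-stretched condition in a critical space, where $X'$ does not lie in $L^\infty$ uniformly. The trick is to control the auxiliary quantity $Q_X$ rather than $\kappa_X$ directly. Differentiating in time and using \eqref{eqpeskin},
\begin{align*}
\frac{d}{dt}\frac{1}{|\Delta_\alpha X|} = -\frac{\Delta_\alpha X\cdot \Delta_\alpha\partial_t X}{|\Delta_\alpha X|^3} = \frac{\Delta_\alpha X\cdot \bigl(\tfrac{1}{4}\Lambda \Delta_\alpha X - \Delta_\alpha N(X)\bigr)}{|\Delta_\alpha X|^3},
\end{align*}
I would integrate in $\tau\in(0,t)$ and distribute the weight $|\alpha|^{\varepsilon'}|t|^{-\varepsilon'}$ onto the difference quotients via the $\mathcal{G}_{T^*}$ norm. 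The H\"older-$\varepsilon'$ control of $X'$ provided by $\|X'\|_{\mathcal{G}_{T^*}}\leq 2\xi_0$ then produces $Q_X(T^*)\leq C(r)\xi_0$, after which Lemma~\ref{lemkappa} upgrades this to $\kappa_X(T^*)\leq 2r$ provided $\xi_0$ is small enough.

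Contraction on $B_{T^*}$ follows by the same scheme applied to $N(X_1)-N(X_2)$, whose trilinear structure is Lipschitz with respect to $X'$ on $B_{T^*}$ because $\kappa_{X_i}$ stays bounded. Finally, the pointwise higher-regularity bound $\sup_{0<t\leq T^*} t^k\|X'(t)\|_{\dot C^k}\leq C_k\xi_0$ comes from bootstrapping Duhamel: applying $\partial_s^k$ and using $\|\partial_s^m K(t,\cdot)\|_{L^1}\lesssim t^{-m}$ together with the already established $\mathcal{G}_{T^*}$ bound on $X'$ gives the $k=1$ estimate, and iterating on the level $k-1\mapsto k$ yields the general case.
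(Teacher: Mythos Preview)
Your overall inventory of ingredients matches the paper --- the Duhamel bound $\|X'\|_{\mathcal{G}_T}\leq \|X_0'\|_{\tilde{\mathcal{G}}_T}+C(1+\kappa)^2\|X'\|_{\mathcal{G}_T}^2(1+\|X'\|_{\mathcal{G}_T})^2$ (Lemma~\ref{estMf22} plus Proposition~\ref{propdeltaN}), the control of $Q_X$ (Proposition~\ref{propQT}), the upgrade $Q_X\Rightarrow\kappa_X$ via Lemma~\ref{lemkappa}, and the higher-regularity bootstrap (Lemma~\ref{lemhighre}) --- but the \emph{architecture} you propose is different and has a real gap.

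The paper does \emph{not} run a Picard/contraction scheme on the rough data. It mollifies $X_{0,m}=X_0\ast\rho_{\vartheta_m}$, invokes the known smooth local existence (Theorem~\ref{localexistence}) to produce genuine solutions $X_m$, runs a \emph{continuity/bootstrap} argument on the solutions $X_m$ to get uniform-in-$m$ bounds $\|X_m'\|_{\mathcal{G}_{T^*}}\leq 2\xi_0$ and $\kappa_m(T^*)\leq 2\kappa_0$, and finally passes to the limit by compactness. Two points force this detour. First, the hypothesis is $\kappa_0=\liminf_{\vartheta\to0}\kappa(X_0\ast\rho_\vartheta)\leq r$, which does \emph{not} ensure $\kappa(X_0)<\infty$; for data merely in $\dot B^1_{\infty,\infty}$ the slope $|\Delta_\alpha X_0|^{-1}$ can blow up, so your ball $B_{T^*}$ (which contains $Q_X(T^*)$, a quantity built from $|\Delta_\alpha X_0|^{-1}$) is not well-defined, and $N(X)$ is not controllable near $t=0$. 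Second, the well-stretched propagation is intrinsically circular: bounding $Q_X$ uses a factor $\kappa_X(t)^2$ on the right (see the proof of Proposition~\ref{propQT}), and Lemma~\ref{lemkappa} in turn needs both $Q_X$ small \emph{and} the smoothing bound $\|X(t)\|_{\dot C^{3/2}}\leq\varepsilon t^{-1/2}$. On a Picard iterate $X^{(n+1)}$ this circularity cannot be closed in one shot; you would need a continuity argument \emph{inside each step}, which in effect forces you back to working with solutions of a regularized problem.

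In short: your nonlinear estimates are the right ones, but you need to replace the fixed-point scheme by mollification of the data, use of Theorem~\ref{localexistence} to obtain smooth approximate solutions, a bootstrap on the pair $(\|X_m'\|_{\mathcal{G}_t},\kappa_m(t))$ to show the bounds persist up to $T^*$, and then a compactness limit $m\to\infty$.
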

Thanks to the above theorems and  Lemma \ref{propparabolic}, we deduce immediately the following local well-posedness results with  $(C^2)^{\dot B^1_{\infty,\infty}}$ initial data. Here  we denote  $(C^2)^{\dot B^1_{\infty,\infty}}$ as the closure of $C^2$ in $\dot B^1_{\infty,\infty}$. 
\begin{corollary}\label{corvmo}
	For any initial data $X_0\in  (C^2)^{\dot B^1_{\infty,\infty}}\cap L^\infty$ satisfying $\kappa_0<\infty$, and any $\xi_0\ll 1$, there exists $T^*>0$ such that the Cauchy problem of \eqref{peskin} has a solution $X\in C([0,T^*];L^\infty)$ satisfying 
	\begin{align*}
		\|X'\|_{\mathcal{G}_{T^*}}\leq2 \xi_0,\quad\quad \kappa(T^*)\leq 2\kappa_0.
	\end{align*}
	Moreover, we have $\sup_{0<t\leq T^*} t^k\|X'(t)\|_{\dot C^k}\leq C_k\xi_0$ for any $k\in\mathbb{Z}^+$.
\end{corollary}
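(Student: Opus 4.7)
\textbf{Proof proposal for Corollary \ref{corvmo}.} The strategy is to reduce the corollary to Theorem \ref{thmlocal} by checking, for arbitrarily small $\xi_0$, that the smallness hypothesis $\|X_0'\|_{\tilde{\mathcal{G}}_{T^*}} \le \xi_0$ can be arranged by choosing $T^*$ small enough. The essential inputs are the identification $\tilde{\mathcal{G}} = \dot B^0_{\infty,\infty}$ from Lemma \ref{lemequiv} and the kernel bounds for $K$ underlying Lemma \ref{propparabolic}, together with the density of $C^2$ in $(C^2)^{\dot B^1_{\infty,\infty}}$.

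First I fix $r:=\kappa_0<\infty$ and, given the prescribed $\xi_0\ll 1$, shrink $\xi_0$ if necessary so that $\xi_0 \le \xi_0(r)$, the small constant furnished by Theorem \ref{thmlocal}. By hypothesis there is a sequence $X_0^{(n)}\in C^2$ with $\|X_0 - X_0^{(n)}\|_{\dot B^1_{\infty,\infty}}\to 0$. Differentiating in $s$ and applying Lemma \ref{lemequiv}, we get
\[
\|X_0' - (X_0^{(n)})'\|_{\tilde{\mathcal{G}}_T}\ \le\ \|X_0' - (X_0^{(n)})'\|_{\tilde{\mathcal{G}}_\infty}\ \simeq\ \|X_0 - X_0^{(n)}\|_{\dot B^1_{\infty,\infty}}\ \xrightarrow{n\to\infty}\ 0,
\]
uniformly in $T>0$, because reducing the time window only reduces the $\mathcal{G}_T$ norm. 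Choose $n$ once and for all so that this difference is at most $\xi_0/2$.

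The next step, and the main technical point, is to show that for the fixed smooth function $g:=(X_0^{(n)})'\in C^1$, one has $\|g\|_{\tilde{\mathcal{G}}_{T^*}}\to 0$ as $T^*\to 0$. By definition this is $\|K*g\|_{\mathcal{G}_{T^*}}$, i.e. a supremum over $\alpha\in\mathbb{R}$ and over the bounded parameter region $\{0\le\mu\le 2/3,\ 2\varepsilon'\le b\le \theta-\mu-\varepsilon'\}$ of the quantity $|\alpha|^{-(\mu+\varepsilon')}\|t^\mu \delta_\alpha \Lambda^{b-\varepsilon'}(K*g)\|_{L^{1/b}_{T^*}L^\infty}$. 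For smooth $g$, the kernel estimates recalled in the introduction give $\|\delta_\alpha \Lambda^{b-\varepsilon'}K(t,\cdot)\|_{L^1}\lesssim \min(1, |\alpha| t^{-1})\, t^{-(b-\varepsilon')}$, so by Young's inequality
\[
\|\delta_\alpha \Lambda^{b-\varepsilon'}(K*g)(t,\cdot)\|_{L^\infty}\ \lesssim\ \min\!\big(|\alpha|^{\mu+\varepsilon'}t^{-(\mu+\varepsilon')},\ 1\big)\,t^{-(b-\varepsilon')}\|g\|_{\dot C^{\mu+\varepsilon'}},
\]
after interpolating the two bounds from the kernel estimate. Taking the $L^{1/b}_{T^*}$ norm of $t^\mu$ times this quantity, dividing by $|\alpha|^{\mu+\varepsilon'}$, and using that $\mu+b$ stays strictly positive and bounded on the parameter region, one obtains
\[
\|g\|_{\tilde{\mathcal{G}}_{T^*}}\ \lesssim\ (T^*)^{\eta}\,\|g\|_{C^1}
\]
for some $\eta>0$ independent of $(\mu,b,\alpha)$. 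Hence $\|g\|_{\tilde{\mathcal{G}}_{T^*}}\le \xi_0/2$ for $T^*$ small enough.

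Combining the two steps, $\|X_0'\|_{\tilde{\mathcal{G}}_{T^*}}\le \xi_0$, and Theorem \ref{thmlocal} produces the desired solution $X\in C([0,T^*];L^\infty)$ with the bounds $\|X'\|_{\mathcal{G}_{T^*}}\le 2\xi_0$, $\kappa(T^*)\le 2\kappa_0$ and the higher-order smoothing estimates $t^k\|X'(t)\|_{\dot C^k}\le C_k\xi_0$. The main obstacle is the second step: verifying the uniform smallness of $\|g\|_{\tilde{\mathcal{G}}_{T^*}}$ across the two-parameter family defining the $\mathcal{G}_{T^*}$ norm and across $\alpha\in\mathbb{R}$, which requires the interpolation between the two regimes $|\alpha|\le t$ and $|\alpha|>t$ in the kernel bound and a compactness argument on the range of $(\mu,b)$. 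Once that is in hand, the rest of the proof is a clean density-plus-smallness argument using Theorem \ref{thmlocal} as a black box.
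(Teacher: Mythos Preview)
Your approach is essentially the paper's: split $X_0'$ into a piece that is small in $\tilde{\mathcal{G}}$ by density of $C^2$ in $(C^2)^{\dot B^1_{\infty,\infty}}$ together with Lemma~\ref{lemequiv}, plus a smooth piece whose $\tilde{\mathcal{G}}_{T^*}$-norm vanishes as $T^*\to 0$, and then invoke Theorem~\ref{thmlocal}. The paper packages the second step as Lemma~\ref{propparabolic}, using a mollifier in place of your generic $C^2$ approximant.

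One correction to your second step: the displayed inequality does not follow from the kernel bound via Young's inequality as you wrote it. Putting $\delta_\alpha$ on the kernel only produces $\|g\|_{L^\infty}$ on the right, and after interpolating the $\min$ you are left with $t^{-b}$, which is not in $L^{1/b}$ near $t=0$. The fix is to put $\delta_\alpha$ on $g$ instead (or, as the paper does, put \emph{everything} on $g$): using $\|\Lambda^{b-\varepsilon'}K(t,\cdot)\|_{L^1}\lesssim t^{-(b-\varepsilon')}$ one gets
\[
\|\delta_\alpha\Lambda^{b-\varepsilon'}(K*g)(t,\cdot)\|_{L^\infty}\ \lesssim\ t^{-(b-\varepsilon')}\,|\alpha|^{\mu+\varepsilon'}\,\|g\|_{\dot C^{\mu+\varepsilon'}},
\]
and then $\|t^{\mu-b+\varepsilon'}\|_{L^{1/b}_{T^*}}\sim (T^*)^{\mu+\varepsilon'}$ gives $\|g\|_{\tilde{\mathcal{G}}_{T^*}}\lesssim (T^*)^{\varepsilon'}\|g\|_{C^1}$, uniformly over the parameter region. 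With this adjustment your argument is complete and matches the paper's.
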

Following is global existence of \eqref{peskin} in $\dot B^1_{\infty,\infty}=\tilde {\mathcal{G}}^1$.
		\begin{theorem}\label{thmglobal}(Global existence)
			For any $c_0\in(0,1)$, there exists $\xi_1>0$ such that if the initial data $X_0=Y_0+Z_0\in L^\infty$ satisfies $Z_0\in \mathcal{V}$,  $\|Z_0'\|_{L^\infty}\in [ c_0,c_0^{-1}]$; $\liminf_{\vartheta\rightarrow 0}\kappa(Y_0\ast\rho_\vartheta+Z_0)\leq c_0^{-1}$ and $\|Y_0'\|_{\tilde{\mathcal{G}}}\leq \xi_1$, then
			the Cauchy problem of \eqref{peskin} has a solution $X=Y+Z\in C([0,+\infty);L^\infty)$ satisfying for some $T_0>0$\\
			\textbf{1)}	
			$$
			\|Y'\|_{\mathcal{G}_{T_0}}\leq 2\xi_1,\quad\quad \sup_{\tau\in[0,T_0]}\left|\|Z' (\tau)\|_{L^\infty}-\|Z_0'\|_{L^\infty}\right|\leq \xi_1,\quad\quad \kappa(T_0)\leq 4c_0^{-1}.
			$$
			\textbf{2)}$$
			\sup_{0<t\leq T_0} t^k\|X'(t)\|_{\dot C^{k}}\leq C_k,\ \forall k\in\mathbb{Z}^+,~~~~\quad\quad\quad\quad\quad~~~~	\|Y(T_0)\|_{\dot C^\frac{3}{2}}\leq \xi_1^\frac{1}{4}.
			$$
			\textbf{3)} There exists a circle $Z_\infty\in \tilde{\mathcal{V}}$ such that for $t\geq T_0$, there holds
			$$
			\|Y(t)\|_{\dot C^\frac{3}{2}}\leq C\xi_1^\frac{1}{4}e^{-\frac{t}{4}}, \quad\quad\quad
			\|X(t)-Z_\infty\|_{\dot C^\frac{3}{2}}\leq C\xi_1^\frac{1}{4}e^{-\frac{t}{4}}.
			$$
		\end{theorem}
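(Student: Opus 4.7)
The plan is to decompose $X = Y + Z$ with $Y = \Pi X$ and $Z = \mathcal{P} X$, so that $(Y,Z)$ satisfies the coupled system \eqref{eqglo}, and then argue in three stages matching the three conclusions. Since $Z_0 \in \mathcal{V}$ and $\|Z_0'\|_{L^\infty}\in[c_0,c_0^{-1}]>0$, an explicit computation shows $Z_0\in \tilde{\mathcal{V}}$, so by \eqref{haha1}--\eqref{haha2} the nonlinearity $\mathfrak{N}(Y+Z)$ vanishes quadratically in $Y$ whenever $Z$ stays in $\tilde{\mathcal{V}}$. This quadratic structure, combined with the spectral gap of $\mathcal{L}$ on $\Pi\dot C^{3/2}$ (the $|n|\le1$ Fourier modes, on which $\mathcal{L}$ has no positivity, are precisely the ones killed by $\Pi$, so $e^{-s\mathcal{L}}$ acts on $\Pi\dot C^{3/2}$ with rate at most $e^{-s/4}$), is the engine of the exponential decay.

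For Stage 1 I would apply Theorem \ref{thmlocal} with a slightly modified initial datum, noting that $Z_0'$ is a trigonometric polynomial of degree $\le 1$ and hence contributes only $\|Z_0'\|_{L^\infty}\le c_0^{-1}$ to $\tilde{\mathcal{G}}$ (its Besov content is trivial). Choosing $T_0$ as a small power of $\xi_1$, say $T_0\sim \xi_1^{3/2}$, this yields a solution on $[0,T_0]$ with $\|Y'\|_{\mathcal{G}_{T_0}}\le 2\xi_1$; the bound on $|\|Z'(\tau)\|_{L^\infty}-\|Z_0'\|_{L^\infty}|$ follows by integrating the second equation of \eqref{eqglo} and using that $\mathfrak{N}(X)$ is $O(\xi_1)$ in $L^\infty$ on this short interval; and propagation of the $Q_X$-quantity in \eqref{defQ} together with Lemma \ref{lemkappa} gives $\kappa(T_0)\le 2\kappa_0\le 2c_0^{-1}$ (or $4c_0^{-1}$ with room). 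The smoothing bound $t^k\|X'(t)\|_{\dot C^k}\le C_k$ comes directly from Theorem \ref{thmlocal}; interpolating $\|Y'(T_0)\|_{L^\infty}\lesssim \xi_1$ against $\|Y'(T_0)\|_{\dot C^1}\lesssim T_0^{-1}\xi_1$ gives $\|Y(T_0)\|_{\dot C^{3/2}}\lesssim T_0^{-1/2}\xi_1\lesssim \xi_1^{1/4}$, which is conclusion (2).

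For Stage 3 I would set up a continuity argument on
\[
M(t)=\sup_{T_0\le \tau\le t}e^{\tau/4}\|Y(\tau)\|_{\dot C^{3/2}}+\sup_{T_0\le \tau\le t}e^{\tau/2}\|Z(\tau)-Z(T_0)-\mathcal{P}\mathfrak{N}(\cdot)\text{-tail}\|_{L^\infty},
\]
using Duhamel
\[
Y(t)=e^{-(t-T_0)\mathcal{L}}Y(T_0)+\int_{T_0}^t e^{-(t-\tau)\mathcal{L}}\Pi \mathfrak{N}(Y+Z)(\tau)\,d\tau,
\]
together with the Section 4 nonlinear estimates, whose quadratic structure (from \eqref{haha1}--\eqref{haha2}) gives $\|\Pi\mathfrak{N}(Y+Z)(\tau)\|_{\dot C^{3/2}}\lesssim \|Y(\tau)\|_{\dot C^{3/2}}^2\lesssim M(t)^2 e^{-\tau/2}$ as long as $Z(\tau)$ remains in a compact subset of $\tilde{\mathcal{V}}$ and $\kappa(\tau)$ stays bounded. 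The convolution of $e^{-(t-\tau)/4}$ with $e^{-\tau/2}$ gives $O(e^{-t/4})$, closing $M(t)\le 2C\xi_1^{1/4}$. Exponential decay of $Y$ then feeds into $\|\partial_t Z\|_{L^\infty}\lesssim \|Y\|_{\dot C^{3/2}}^2\lesssim \xi_1^{1/2}e^{-t/2}$, which is integrable; hence $Z(t)$ is Cauchy in the finite-dimensional space $\mathcal{V}$, converges to some $Z_\infty$, and $\|Z_\infty'\|_{L^\infty}\ge c_0/2>0$ ensures $Z_\infty\in \tilde{\mathcal{V}}$. The second estimate of conclusion (3) follows from the triangle inequality with $\|Z(t)-Z_\infty\|_{\dot C^{3/2}}\lesssim \|Z(t)-Z_\infty\|_{L^\infty}\lesssim \xi_1^{1/2}e^{-t/2}$.

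The main obstacle is the coupled nature of this bootstrap: the nonlinear estimates require propagation of the well-stretched bound $\kappa(t)\lesssim c_0^{-1}$ for all $t\ge 0$, but $\kappa$ is a nonlocal, geometric quantity of $X=Y+Z$ that depends sensitively on both the decaying perturbation $Y$ and the slowly drifting circle $Z(t)\in \tilde{\mathcal{V}}$. One must therefore run the continuity argument for $M(t)$ simultaneously with a continuity argument for $\kappa(t)$, using the smallness of $\|Y(t)\|_{\dot C^{3/2}}$ to compare $\kappa(X(t))$ with $\kappa(Z(t))$ and the non-degeneracy $\|Z'(t)\|_{L^\infty}\ge c_0/2$ to bound the latter uniformly. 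A secondary technical difficulty is the transition between the short-time space $\mathcal{G}_{T_0}$ (which captures critical $\dot B^1_{\infty,\infty}$ regularity) and the long-time space $\dot C^{3/2}$ (subcritical); this is bridged precisely by the parabolic smoothing estimate in Stage 2 and is why conclusion (2) appears as a separate intermediate step.
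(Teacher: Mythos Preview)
Your overall architecture is right, but Stage~1 as written has a genuine gap, and Stage~3 diverges from the paper's route.

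\textbf{Stage 1.} You propose to invoke Theorem~\ref{thmlocal} on $X$ and read off $\|Y'\|_{\mathcal{G}_{T_0}}\le 2\xi_1$. This does not follow. Theorem~\ref{thmlocal} requires $\|X_0'\|_{\tilde{\mathcal{G}}_{T^*}}\le\xi_0(r)$ and returns $\|X'\|_{\mathcal{G}_{T^*}}\le 2\xi_0$, not $2\xi_1$. Even accepting that $\|Z_0'\|_{\tilde{\mathcal{G}}_{T_0}}\lesssim T_0^{\varepsilon'}c_0^{-1}$ is small for $T_0\sim\xi_1^{3/2}$, the best you get from Theorem~\ref{thmlocal} (even reading its proof to extract $\|X'\|_{\mathcal{G}_{T^*}}\lesssim\|X_0'\|_{\tilde{\mathcal{G}}_{T^*}}$) is $\|X'\|_{\mathcal{G}_{T_0}}\lesssim T_0^{\varepsilon'}c_0^{-1}\sim\xi_1^{3\varepsilon'/2}c_0^{-1}$, and hence $\|Y'\|_{\mathcal{G}_{T_0}}\le\|X'\|_{\mathcal{G}_{T_0}}+\|Z'\|_{\mathcal{G}_{T_0}}\lesssim\xi_1^{3\varepsilon'/2}c_0^{-1}$, which is far larger than $2\xi_1$ since $\varepsilon'\ll 1$. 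The $Z$-contribution pollutes the bound. What the paper does instead is run a \emph{separate} bootstrap for $Y'$ using the $Y$-equation in \eqref{eqglo} together with Lemma~\ref{propdeltaNY} and the refined estimate Proposition~\ref{propparabolicY}, whose right-hand side is genuinely quadratic in $\|Y'\|_{\mathcal{G}_t}$ (this is where \eqref{haha1}--\eqref{haha2} are actually used at the level of the $\mathcal{G}_T$ norms, not just morally). That quadratic structure is what closes $\|Y'\|_{\mathcal{G}_t}\le 2\xi_1$ from $\|Y_{0}'\|_{\tilde{\mathcal{G}}}\le\xi_1$. The paper also needs to mollify $Y_0$ and pass to the limit by compactness, since the a~priori bootstrap requires a solution to exist; you do not mention this step.

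\textbf{Stage 3.} Here your approach and the paper's are genuinely different. You propose a self-contained continuity argument in $\dot C^{3/2}$ using Duhamel, the spectral gap of $e^{-t\mathcal{L}}$ on $\operatorname{ran}\Pi$, and the quadratic structure of $\mathfrak{N}$. This is a reasonable strategy, but it amounts to reproving the known subcritical global result (Theorem~\ref{globallemma}) from scratch, and would require nonlinear estimates for $\mathfrak{N}$ in $\dot C^{3/2}$ that are not among the Section~4 estimates (those live in the $\mathcal{G}_T$ framework). The paper instead simply observes that the smoothing estimate \eqref{smoresultY} gives $\|Y(T_0)\|_{\dot C^{3/2+\varepsilon'}}\le\xi_1^{1/4}$ at the handoff time $T_0=\xi_1^{2/3}$, checks $\xi_1^{1/4}\le\rho_0\|Z_0'\|_{L^\infty}$, and then invokes Theorem~\ref{globallemma} as a black box for all of conclusion~(3), including the control of $\kappa(t)$ for $t\ge T_0$. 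Your approach would be more self-contained; the paper's is shorter and delegates the long-time analysis entirely to prior work.

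A minor point: you write $\mathfrak{N}(X)=O(\xi_1)$; in fact Proposition~\ref{propparabolicY} gives $O(\xi_1^2)$, which is what makes the $Z$-drift bound in conclusion~(1) close.
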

		\begin{proposition}(Uniqueness)\label{Uniqueness}
			Let $X,\bar X\in \mathcal{G}_T^1$ be solutions of equation \eqref{eqpeskin} on $[0,T]$ with $\kappa_X(T)+\kappa_{\bar X}(T)<+\infty$. If 
			$$
			\left(1+\kappa_X(T)+\kappa_{\bar X}(T)\right)^{2}(\|X'\|_{\mathcal{G}_T}+\|\bar X'\|_{\mathcal{G}_T})\ll 1 ,
			$$
			then there holds 
			$$
			\|X'-\bar X'\|_{\mathcal{G}_T}\lesssim\|X_0'-\bar X'_0\|_{\tilde{\mathcal{G}}_T}.$$
		\end{proposition}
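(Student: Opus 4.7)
The plan is to linearize by setting $W=X-\bar X$, which solves
\begin{equation*}
\partial_t W+\tfrac{1}{4}\Lambda W=N(X)-N(\bar X),\qquad W|_{t=0}=X_0-\bar X_0.
\end{equation*}
I then invoke the linear parabolic regularity theory built in Section 3 (Lemma \ref{propparabolic}) for the operator $\partial_t+\tfrac{1}{4}\Lambda$ in the $\mathcal{G}_T$ scale. Applied to $W$, this should yield an estimate of the form
\begin{equation*}
\|W'\|_{\mathcal{G}_T}\lesssim \|X_0'-\bar X_0'\|_{\tilde{\mathcal{G}}_T}+\bigl\|\,N(X)-N(\bar X)\,\bigr\|_{\mathcal{Y}_T},
\end{equation*}
where $\mathcal{Y}_T$ is whichever forcing space was used to establish Theorem \ref{thmlocal}. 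The uniqueness conclusion will follow as soon as the nonlinear term on the right is dominated by a small constant times $\|W'\|_{\mathcal{G}_T}$ and absorbed to the left.

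The next step is to write $N(X)-N(\bar X)$ as a multilinear expression in $X$, $\bar X$, $W$. Using the representation \eqref{Ne} and adding/subtracting factor by factor, I would split
\begin{equation*}
N(X)-N(\bar X)=\sum\int\bigl[H(\tilde\Delta_\alpha X)-H(\tilde\Delta_\alpha \bar X)\bigr]E^\alpha X_i\,\delta_\alpha X_j'\tfrac{d\alpha}{\alpha}+\sum\int H(\tilde\Delta_\alpha \bar X)\,E^\alpha W_i\,\delta_\alpha X_j'\tfrac{d\alpha}{\alpha}+\sum\int H(\tilde\Delta_\alpha \bar X)\,E^\alpha \bar X_i\,\delta_\alpha W_j'\tfrac{d\alpha}{\alpha}.
\end{equation*}
For the first block I use the mean value identity
\begin{equation*}
H(\tilde\Delta_\alpha X)-H(\tilde\Delta_\alpha \bar X)=\tilde\Delta_\alpha W\cdot\int_0^1\nabla H\bigl(\lambda \tilde\Delta_\alpha X+(1-\lambda)\tilde\Delta_\alpha \bar X\bigr)\,d\lambda,
\end{equation*}
whose coefficient is again a rational function that is controlled, pointwise in $\alpha,s$, by powers of $1+\kappa_X(T)+\kappa_{\bar X}(T)$ thanks to the well-stretched hypothesis. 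The cancellation identity \eqref{integral0} (or its difference version, obtained by subtracting the analogous identity for $\bar X$) then lets us tame the singularity of $d\alpha/\alpha$ near zero exactly as in Section 4.

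At this stage each of the three blocks is a trilinear expression in $(X',\bar X',W')$ with one factor equal to $W'$ (or to $\tilde\Delta_\alpha W$, which is estimated in the same way via the fundamental theorem of calculus), so the same commutator and Hölder-in-time estimates used for the a priori nonlinear bound in Section 4 apply verbatim with one factor replaced by $W$. I expect this to give
\begin{equation*}
\bigl\|N(X)-N(\bar X)\bigr\|_{\mathcal{Y}_T}\lesssim\bigl(1+\kappa_X(T)+\kappa_{\bar X}(T)\bigr)^{2}\bigl(\|X'\|_{\mathcal{G}_T}+\|\bar X'\|_{\mathcal{G}_T}\bigr)\|W'\|_{\mathcal{G}_T},
\end{equation*}
after which the smallness assumption closes the estimate.

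The main obstacle will be the first block above: differentiating $H$ costs an extra inverse power of $|\tilde\Delta_\alpha|$, and one has to extract the factor $\tilde\Delta_\alpha W$ while keeping the $\alpha$-integrand borderline-integrable at $0$ and at $\infty$. This forces a rerun of the most delicate cancellation estimates of Section 4, with $W$ playing the role of one of the small factors, rather than applying those estimates as a black box; organizing this bookkeeping while maintaining the critical $\mathcal{G}_T$-gain is the essential work.
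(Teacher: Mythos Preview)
Your proposal is correct and follows essentially the same route as the paper: set $W=X-\bar X$, telescope $N(X)-N(\bar X)$ into three blocks each linear in $W$, apply the Section~3 linear estimate, and rerun the Section~4 nonlinear bounds with one factor replaced by $W$ so that the smallness hypothesis absorbs $\|W'\|_{\mathcal{G}_T}$. Two small corrections: the linear regularity lemma you want is Lemma~\ref{estMf22}, not Lemma~\ref{propparabolic} (the latter is a mollifier approximation statement); and for the nonlinear part the paper points specifically to Proposition~\ref{propparabolicY} rather than Proposition~\ref{propdeltaN}, exploiting the structural analogy in which $W$ plays the role of the small factor $Y$---this also produces an extra harmless factor $(1+\|X'\|_{\mathcal{G}_T}+\|\bar X'\|_{\mathcal{G}_T})^5$ in the bound.
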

		The above proposition implies the uniqueness of the solution in Theorem \ref{thmlocal}, Corollary \ref{corvmo} and Theorem \ref{thmglobal}.\\ 
		Our result is related to the work of Koch and Tartaru \cite{KochTar} about the well-posedness of Navier-Stokes in $BMO^{-1}$. They proved that for initial data $u_0$ with $\|u_0\|_{BMO^{-1}}\ll 1$,  the Navier-Stokes equation 
		\begin{equation}\label{NS}
				\begin{aligned}
				\partial_t u-\Delta u+u\cdot\nabla u+\nabla p=0,\\
				\operatorname{div} u=0,
			\end{aligned}
		\end{equation}
		has a unique solution $u$ in $\mathcal{X}$ so that 
		$$
		\|u\|_{\mathcal{X}}\lesssim  \|u_0\|_{BMO^{-1}}.
		$$
		Here the space $\mathcal{X}$ is equipped with the norm 
		$$
		\|u\|_{\mathcal{X}} {:=} \sup _{t>0} \sqrt{t}\|u(t,\cdot)\|_{L^{\infty}}+\sup _{x, R}\left( \int_{0}^{R} \fint_{B(x, \sqrt{R})}|u(t,y)|^{2} d y d t\right)^{\frac{1}{2}},$$
		where $\fint_{B(x, \sqrt{R})}=|B(x, \sqrt{R})|^{-1}\int_{B(x, \sqrt{R})}.$
		Koch and Tartaru \cite{KochTar} used the following characterization of the $BMO^{-1}$ norm (see also \cite{Stein,Ping}):
		\begin{align*}\left\|u_{0}\right\|_{BMO^{-1}} \sim\sup_{t>0}\sqrt{t}\|e^{t\Delta }u_0\|_{L^\infty}+\sup _{x, R}\left(\int_{0}^{R} \fint_{B(x, \sqrt{R})}\left|e^{t \Delta} u_{0}(y)\right|^{2} d y d t\right)^{\frac{1}{2}}.
		\end{align*}
	Moreover, the problem \eqref{NS} is strongly ill-posed in $\dot B^{-1}_{\infty,\infty}$, proved by J. Bourgain and  N. Pavlovic \cite{Bour}. In Theorem \ref{thmlocal} and Theorem \ref{thmglobal}, we prove that the Peskin problem is well-posed in $\dot B^{1}_{\infty,\infty}$.

		\section{Preliminaries}
		We denote $\dot C^k, k=0,1,2,\cdots$  the space of functions with $k$-th continuous derivative. Let $\gamma \in (0,1)$. A function $h\in \dot C^{k}$ is in the H\"{o}lder space $\dot C^{k+\gamma}$ if 
		$$\|h\|_{\dot C^{k+\gamma}}=
		\sup_{s\neq s'}\frac{|h^{(k)}(s)-h^{(k)}(s')|}{|s-s'|^\gamma}<\infty.
		$$
		We introduce the following H\"{o}lder estimates for periodic functions.
		\begin{lemma}\label{holder}	For any function $f:\mathbb{R}\rightarrow \mathbb{R}$, if $f$ is $2\pi$-periodic, there holds\\
			\textbf{1)} For any $0<l_1<l_2$, \begin{align*}
				\|f\|_{\dot C^{l_1}}\lesssim \|f\|_{\dot C^{l_2}}.  
			\end{align*}
			\textbf{2)} For any $0<\gamma<1$,\begin{align*}
				\sup_{\alpha,s}\frac{|\delta_\alpha f(s)|}{|\alpha|^\gamma}+	\sup_{\alpha,s}\frac{|\delta_\alpha f(s)|}{|\tilde\alpha|^\gamma}\lesssim \|f\|_{\dot C^\gamma}.
			\end{align*}
			\textbf{3)}	For any $0<\gamma<1$,\begin{align*}
				\sup_{\alpha,s}\frac{|E^\alpha f(s)|}{|\alpha|^\gamma}+	\sup_{\alpha,s}\frac{|E^\alpha f(s)|}{|\tilde\alpha|^\gamma}\lesssim \|f\|_{\dot C^{1+\gamma}}.
			\end{align*}
		\end{lemma}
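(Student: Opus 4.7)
The plan is to prove all three items directly as routine estimates for periodic Hölder functions. The single non-trivial input I would record first is the closed form
\[
\tilde\alpha = \bigl(\tfrac12\cot(\alpha/2)\bigr)^{-1} = 2\tan(\alpha/2),
\]
which shows $\tilde\alpha$ is $2\pi$-periodic in $\alpha$, satisfies $|\tilde\alpha|\geq|\alpha|$ on $|\alpha|\leq\pi$ (from $|\tan x|\geq|x|$ on $|x|\leq\pi/2$), and has Taylor expansion $\tilde\alpha=\alpha+\alpha^3/12+O(\alpha^5)$ at the origin.

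For part 1, write $l_i=k_i+\gamma_i$ with $k_i\in\mathbb{Z}_{\geq 0}$ and $\gamma_i\in(0,1)$ and split into $|s-s'|\leq 1$ and $|s-s'|\in[1,\pi]$. In the former regime $|f^{(k_1)}(s)-f^{(k_1)}(s')|\lesssim\|f\|_{\dot C^{l_2}}|s-s'|^{\min(l_2-k_1,1)}$ already absorbs $|s-s'|^{\gamma_1}$ since one checks $\min(l_2-k_1,1)\geq\gamma_1$ in both cases $k_1<k_2$ and $k_1=k_2$. In the latter regime, periodicity forces $\int_{\mathbb{T}}f^{(j)}\,ds=0$ for every $j\geq 1$, so $\|f^{(j)}\|_{L^\infty}\leq\operatorname{osc} f^{(j)}$; iterating the mean value theorem from $j=k_2$ down to $j=k_1+1$ gives $\|f^{(j)}\|_{L^\infty}\lesssim\|f\|_{\dot C^{l_2}}$ for every intermediate $j$, and the claim follows because $|s-s'|^{\gamma_1}\geq 1$ (the case $k_1=k_2$ reduces to the fractional bound applied to $f^{(k_1)}$).

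For part 2, both $\delta_\alpha f(s)$ and $\tilde\alpha$ are $2\pi$-periodic in $\alpha$, so I would first reduce to $|\alpha|\leq\pi$. The $|\alpha|^\gamma$ bound is then the definition of $\dot C^\gamma$ with the torus distance, and the $|\tilde\alpha|^\gamma$ bound follows at once from $|\tilde\alpha|\geq|\alpha|$.

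Part 3 is the substantive one, and I would decompose
\[
E^\alpha f(s) = \bigl(f'(s-\alpha)-\Delta_\alpha f(s)\bigr) + \Bigl(\tfrac{1}{\alpha}-\tfrac{1}{\tilde\alpha}\Bigr)\delta_\alpha f(s).
\]
Writing $\Delta_\alpha f(s)=\frac{1}{\alpha}\int_0^\alpha f'(s-u)\,du$, the first bracket equals $\frac{1}{\alpha}\int_0^\alpha[f'(s-\alpha)-f'(s-u)]\,du$ and is bounded by $\|f\|_{\dot C^{1+\gamma}}|\alpha|^\gamma$. For the second bracket, the Taylor expansion above yields $|\tilde\alpha-\alpha|\lesssim|\alpha|^3$ on $|\alpha|\leq\pi/2$, whence $|1/\alpha-1/\tilde\alpha|\lesssim|\alpha|$ there; combined with $|\delta_\alpha f|\lesssim|\alpha|\|f'\|_{L^\infty}\lesssim|\alpha|\|f\|_{\dot C^{1+\gamma}}$ (using part 1 to pass from $\|f\|_{\dot C^{1+\gamma}}$ to $\|f'\|_{L^\infty}$), the second bracket is $\lesssim|\alpha|^2\|f\|_{\dot C^{1+\gamma}}\leq|\alpha|^\gamma\|f\|_{\dot C^{1+\gamma}}$. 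The complementary range $|\alpha|\in[\pi/2,\pi]$ is straightforward since both $|\alpha|$ and $|\tilde\alpha|$ are bounded below there; the $|\tilde\alpha|^\gamma$ version follows once more from $|\tilde\alpha|\geq|\alpha|$. The only bookkeeping hazard is the singular behaviour of $\tilde\alpha$ at $\alpha\in(2k+1)\pi$, but this works in our favour since $|\tilde\alpha|\to\infty$ only helps the denominator. I do not foresee any genuine analytical difficulty beyond this.
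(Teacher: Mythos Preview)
Your proposal is correct and follows essentially the same route as the paper: periodicity reduces everything to $|\alpha|\le\pi$, the comparison $|\tilde\alpha|\ge|\alpha|$ (equivalently $\alpha/\tilde\alpha\lesssim 1$) handles the $\tilde\alpha$-denominators, and the decomposition $E^\alpha f=(f'(s-\alpha)-\Delta_\alpha f)+\bigl(\tfrac1\alpha-\tfrac1{\tilde\alpha}\bigr)\delta_\alpha f$ is exactly what the paper uses for part~3. The only cosmetic differences are that in part~1 the paper invokes directly the existence of a zero of $f'$ (rather than the mean-zero integral) to get $\|f^{(j)}\|_{L^\infty}\lesssim\|f\|_{\dot C^{l_2}}$, and in part~3 the paper bounds the second bracket more tersely via $\bigl|\tfrac1\alpha-\tfrac1{\tilde\alpha}\bigr|\lesssim 1$ together with part~1, whereas you use the sharper $\lesssim|\alpha|$ from the Taylor expansion; both yield the same conclusion.
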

		\begin{proof}
			\textbf{1)} For any $\alpha\in(-\pi,\pi)$ and $\gamma\in(0,1)$, one has  $\sup_{k\in\mathbb{Z}}\frac{1}{|\alpha+2k\pi|^\gamma}\leq \frac{1}{|\alpha|^\gamma}$. The function $f$ is periodic, hence
			it is easy to check that
			\begin{align*}
				\sup_{\alpha,s}\frac{|\delta_\alpha f(s)|}{|\alpha|^\gamma}=\sup_{\alpha,s\in(-\pi,\pi)}\frac{|\delta_\alpha f(s)|}{|\alpha|^\gamma}.
			\end{align*}
			Hence for $0<\gamma_1\leq\gamma_2\leq 1$,
			$$
			\|f\|_{\dot C^{\gamma_1}}\lesssim \|f\|_{\dot C^{\gamma_2}}.
			$$
			Furthermore, for any $s\in\mathbb{R}$ there exists $s_0$ such that $f'(s_0)=0$, $|s-s_0|\leq \pi$. Hence for any $\gamma\in(0,1)$
			$$
			|f'(s)|=|f'(s)-f'(s_0)|\lesssim \|f'\|_{\dot C^{\gamma}}.
			$$
			Hence $\|f'\|_{L^\infty}\lesssim \|f'\|_{\dot C^\gamma}$. We repeat the above procedure with $f$ replaced by $f', f'',\cdots$, we obtain \textbf{1)}.\\ 
			\textbf{2)}	Observe that $\sup_{\alpha\in(-\pi,\pi)}\frac{\alpha}{\tilde\alpha}\lesssim 1.$ Hence 
			\begin{align*}
				\sup_{\alpha,s}\frac{|\delta_\alpha f(s)|}{|\tilde\alpha|^\gamma}\lesssim \sup_{\alpha,s\in(-\pi,\pi)}\frac{|\delta_\alpha f(s)|}{|\tilde\alpha|^\gamma}\lesssim \|f\|_{\dot C^\gamma}.
			\end{align*}
			\textbf{3)} 	Recall the definition \eqref{Deltadef}, one has 
			\begin{align*}
				\sup_{\alpha,s}	\frac{|E^\alpha f(s)|}{|\alpha|^\gamma}\lesssim	\sup_{\alpha,s} \frac{1}{|\alpha|^\gamma}\left|f'(s-\alpha)-\frac{\delta_\alpha f(s)}{\alpha}\right|+	\sup_{\alpha,s}\frac{|\delta_\alpha f(s)|}{|\alpha|^\gamma}\lesssim \|f\|_{\dot C^{1+\gamma}},
			\end{align*}
			where we also used \textbf{1)}. Similarly we have 
			\begin{align*}
				\sup_{\alpha,s}\frac{|E^\alpha f(s)|}{|\tilde\alpha|^\gamma}\lesssim \|f\|_{\dot C^{1+\gamma}}.
			\end{align*}
			The proof is complete.
		\end{proof}
		\begin{lemma} Let $\theta_1\in(0,1)$. For any function $f:\mathbb{R}\rightarrow\mathbb{R}$ and any $0<\varepsilon_0<\frac{1}{2}\min\{\theta_1,1-\theta_1\}$, there hold
			\begin{equation}\label{interpfrac}
				||\Lambda^{\theta_1}f||_{L^\infty}\lesssim (||f||_{\dot C^{\theta_1-\varepsilon_0}}||f||_{\dot C^{\theta_1+\varepsilon_0}})^{\frac{1}{2}},
			\end{equation}	
			\begin{equation}\label{interpfrac2}
				||f||_{\dot C^{\theta_1}}\lesssim ||\Lambda^{\theta_1}f||_{L^\infty}.
			\end{equation}	

		\end{lemma}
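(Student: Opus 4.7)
The plan is to treat the two inequalities separately: \eqref{interpfrac} by a direct two-scale split of the singular-integral representation, and \eqref{interpfrac2} via a Littlewood--Paley argument.

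\textbf{For \eqref{interpfrac}.} I would work from the pointwise representation
\[
\Lambda^{\theta_1} f(x) \,=\, c_{\theta_1}\,\mathrm{p.v.}\!\int \frac{f(x)-f(y)}{|x-y|^{1+\theta_1}}\,dy,
\]
which is absolutely convergent (away from $y=x$) under our hypotheses: the bound $|f(x)-f(y)|\lesssim \|f\|_{\dot C^{\theta_1+\varepsilon_0}}|x-y|^{\theta_1+\varepsilon_0}$ tames the local singularity of the integrand ($|x-y|^{-1+\varepsilon_0}$), while $|f(x)-f(y)|\lesssim \|f\|_{\dot C^{\theta_1-\varepsilon_0}}|x-y|^{\theta_1-\varepsilon_0}$ gives an integrable tail $|x-y|^{-1-\varepsilon_0}$ at infinity. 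Splitting the integral at $|x-y|=r$ and applying each bound on the appropriate piece yields
\[
|\Lambda^{\theta_1} f(x)| \,\lesssim\, \|f\|_{\dot C^{\theta_1+\varepsilon_0}}\, r^{\varepsilon_0} + \|f\|_{\dot C^{\theta_1-\varepsilon_0}}\, r^{-\varepsilon_0},
\]
and optimizing by choosing $r^{2\varepsilon_0} = \|f\|_{\dot C^{\theta_1-\varepsilon_0}}/\|f\|_{\dot C^{\theta_1+\varepsilon_0}}$ delivers the geometric mean in \eqref{interpfrac}.

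\textbf{For \eqref{interpfrac2}.} I would pass through the Littlewood--Paley characterization $\dot C^{\theta_1}\simeq \dot B^{\theta_1}_{\infty,\infty}$, valid since $\theta_1\in(0,1)$ is non-integer. The task then reduces to
\[
\sup_{j}\,2^{j\theta_1}\|\Delta_j f\|_{L^\infty} \,\lesssim\, \|\Lambda^{\theta_1}f\|_{L^\infty},
\]
where $\Delta_j$ is a standard Littlewood--Paley projector. Writing $\Delta_j f = (\Delta_j\Lambda^{-\theta_1})\bigl(\Lambda^{\theta_1}f\bigr)$, the convolution kernel of $\Delta_j\Lambda^{-\theta_1}$ has $L^1$ norm $\lesssim 2^{-j\theta_1}$ by scaling: its Fourier symbol $|\xi|^{-\theta_1}\varphi(2^{-j}\xi)$ is, up to the factor $2^{-j\theta_1}$, a $2^j$-dilate of a fixed Schwartz multiplier. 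Young's inequality then supplies the block estimate, and taking the supremum over $j$ gives \eqref{interpfrac2}.

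The main (mild) obstacle is a bookkeeping one: justifying the principal-value representation when $f$ is merely controlled in critical H\"older seminorms, and, in the periodic setting, dealing with the kernel of $\Lambda^{\theta_1}$, namely constants. Both are resolved by working modulo constants: $\dot C^{\theta_1}$ is itself a seminorm, and the periodic symbol $|n|^{\theta_1}$ used in \eqref{deffraclap} annihilates the zero mode automatically, so the Fourier-series formulation bypasses the issue entirely.
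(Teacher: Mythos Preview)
Your proof of \eqref{interpfrac} is exactly the paper's argument: split the singular integral at a scale $\lambda_1$, bound each piece by the relevant H\"older seminorm, and optimize.

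For \eqref{interpfrac2} you take a genuinely different route. The paper argues directly via the Riesz potential: writing $f=\Lambda^{-\theta_1}g$ with $g=\Lambda^{\theta_1}f$, it uses the explicit kernel $c|x|^{-(1-\theta_1)}$ and the elementary bound
\[
\int_{\mathbb{R}}\Bigl|\,|y|^{-(1-\theta_1)}-|y-\alpha|^{-(1-\theta_1)}\Bigr|\,dy \lesssim |\alpha|^{\theta_1}
\]
to get $|\delta_\alpha f(x)|\lesssim \|g\|_{L^\infty}|\alpha|^{\theta_1}$ in one line. Your Littlewood--Paley approach is equally valid and standard: the identification $\dot C^{\theta_1}=\dot B^{\theta_1}_{\infty,\infty}$ together with the $L^1$ bound on the kernel of $\Delta_j\Lambda^{-\theta_1}$ gives the block estimate immediately. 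The paper's argument is more self-contained (no Besov machinery needed), while yours makes the mechanism transparent in frequency space and generalizes painlessly to other indices; both are short. One minor remark: the lemma is stated for $f:\mathbb{R}\to\mathbb{R}$, so your aside about the periodic zero mode is not actually needed here.
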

		\begin{proof} 
			Recall the definition of the fractional Laplacian \eqref{deffraclap}, we have for any $\lambda_1>0$
$$
				|\Lambda^{\theta_1}_1f(s)|\lesssim\left(\int_{|z|\leq \lambda_1}+\int_{|z|\geq \lambda_1}\right)|\delta_zf(s)|\frac{dz}{|z|^{1+{\theta_1}}} |\lambda_1|^{\varepsilon_0}||f||_{\dot C^{\theta_1+\varepsilon_0}}+|\lambda_1|^{-\varepsilon_0}||f||_{\dot C^{\theta_1-\varepsilon_0}}.$$
			Choosing $\lambda_1=\left(||f||_{\dot C^{\theta_1+\varepsilon_0}}^{-1}||f||_{\dot C^{\theta_1-\varepsilon_0}}\right)^\frac{1}{2\varepsilon_0}$ we get \eqref{interpfrac}.\\
			To prove \eqref{interpfrac2}, we only need to prove $\|\Lambda^{-\theta_1}g\|_{\dot C^{\theta_1}}\lesssim \|g\|_{L^\infty}$. Observe that
$$\left|\delta_\alpha\Lambda^{-\theta_1}g(x)\right|\lesssim \left|\int g(y)\left(\delta_\alpha\frac{1}{|\cdot|^{1-{\theta_1}}}\right)(x-y)dy\right|\lesssim \|g\|_{L^\infty}\int \left|\frac{1}{|y|^{1-{\theta_1}}}-\frac{1}{|y-\alpha|^{1-{\theta_1}}}\right|dy \lesssim\|g\|_{L^\infty}|\alpha|^{\theta_1}.$$
			Hence we get \eqref{interpfrac2}.
		\end{proof}
		\begin{lemma}\label{lem2}
			For any function $f,g:\mathbb{R}^+\times{\mathbb{R}}\rightarrow\mathbb{R}$, $\gamma\in(0,1)$ and $p\in(1,+\infty)$, denote $\tilde f(t,\alpha)=\tilde\Delta_\alpha f(t,0)$, if $f$ is $2\mathbb{\pi}$-periodic in space, there hold\\
			\textbf{1)}	
			\begin{align*}
				\sup_{\alpha,y}\frac{\|\tilde f(\alpha)-\tilde f(y)\|_{L^p_T}}{|\alpha-y|^\gamma}\lesssim \sup_\alpha \frac{\|\delta_\alpha f'\|_{L^p_TL^\infty}}{|\alpha|^\gamma},
			\end{align*}
			\textbf{2)}	Let $\frac{1}{p}=\frac{1}{p_1}+\frac{1}{p_2}$ and $0<\gamma'<\gamma$, then
			\begin{align*}
				\sup_{\alpha,y}\frac{\left\| |\delta_\alpha g'(0)||\tilde f(\alpha)-\tilde f(y) |\right\|_{L^p_T}}{|\alpha-y|^\gamma}\lesssim\left(\sup_\alpha \frac{\|\delta_\alpha f'\|_{L^{p_1}_TL^\infty}}{|\alpha|^{\gamma'}}\right)\left(\sup_\alpha \frac{\|\delta_\alpha g'\|_{L^{p_2}_TL^\infty}}{|\alpha|^{\gamma-\gamma'}}\right),
			\end{align*}
			where $|\alpha-y|=\inf_{k\in \mathbb{Z}}|\alpha-y-2k\pi|$ denotes the distance of $\alpha$ and $y$ on $\mathbb{T}$.
		\end{lemma}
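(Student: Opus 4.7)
The plan is to reduce everything to the fundamental-theorem-of-calculus identity $\delta_\alpha f(0)=\alpha\int_0^1 f'(-\xi\alpha)\,d\xi$, which gives $\tilde\Delta_\alpha f(0)=h(\alpha)\Phi(\alpha)$ with $h(\alpha):=\alpha/\tilde\alpha$ and $\Phi(\alpha):=\int_0^1 f'(-\xi\alpha)\,d\xi$. From $|\tan(\alpha/2)|\ge|\alpha/2|$ one has $|h|\le 1$ on $[-\pi,\pi]$, and a direct computation shows that $h$ extends to a Lipschitz function on $[-\pi,\pi]$.

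For part \textbf{1)}, I would write
\[
\tilde f(\alpha)-\tilde f(y)=h(\alpha)\bigl[\Phi(\alpha)-\Phi(y)\bigr]+\bigl[h(\alpha)-h(y)\bigr]\Phi(y).
\]
Since $\Phi(\alpha)-\Phi(y)=-\int_0^1 \delta_{\xi(\alpha-y)}f'(-\xi y)\,d\xi$, Minkowski together with the hypothesis bound the $L^p_T$-norm of the first summand by $\lesssim M_\gamma|\alpha-y|^\gamma$, where $M_\gamma$ denotes the RHS of \textbf{1)}. For the second summand, the Lipschitz bound $|h(\alpha)-h(y)|\lesssim|\alpha-y|\lesssim|\alpha-y|^\gamma$ (using $|\alpha-y|\le\pi$) reduces everything to the uniform estimate $\|\Phi(y)\|_{L^p_T}\lesssim M_\gamma$. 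The latter I would derive from the periodicity of $f$: since $f'(t,\cdot)$ has zero spatial mean, $f'(t,s)=\frac{1}{2\pi}\int_{-\pi}^{\pi}\delta_\alpha f'(t,s)\,d\alpha$, and Minkowski gives $\|f'(\cdot,s)\|_{L^p_T}\lesssim\int_{-\pi}^{\pi}|\alpha|^\gamma M_\gamma\,d\alpha\lesssim M_\gamma$ uniformly in $s$; consequently $\|\Phi(y)\|_{L^p_T}\le\int_0^1\|f'(\cdot,-\xi y)\|_{L^p_T}d\xi\lesssim M_\gamma$.

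For part \textbf{2)}, denote the two suprema on the RHS by $M_1,M_2$. I would split into two regimes. When $|\alpha-y|\gtrsim|\alpha|$, H\"older in time with $1/p=1/p_1+1/p_2$ and part \textbf{1)} at exponent $\gamma'$ immediately yield
\[
\bigl\|\delta_\alpha g'(\cdot,0)[\tilde f(\alpha)-\tilde f(y)]\bigr\|_{L^p_T}\lesssim M_2|\alpha|^{\gamma-\gamma'}\cdot M_1|\alpha-y|^{\gamma'}\lesssim M_1M_2|\alpha-y|^\gamma,
\]
because $|\alpha|^{\gamma-\gamma'}\lesssim|\alpha-y|^{\gamma-\gamma'}$. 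When $|\alpha-y|\ll|\alpha|$, sign constraints force $\alpha,y$ to lie on the same side of $0$, so $[y,\alpha]\not\ni 0$ and $|\beta|\sim|\alpha|$ on this segment. A direct computation, using $\delta_\beta f(0)=\beta\int_0^1 f'(-\xi\beta)\,d\xi$, gives
\[
\tilde f'(\beta)=\frac{1}{\tilde\beta}\left[\int_0^1\bigl(f'(-\beta)-f'(-\xi\beta)\bigr)d\xi+(1-k(\beta))\int_0^1 f'(-\xi\beta)\,d\xi\right],\qquad k(\beta):=\frac{\beta}{\sin\beta},
\]
and from $|\tilde\beta|\ge|\beta|$ and $|(1-k(\beta))/\tilde\beta|\lesssim 1$ on $[-\pi,\pi]$, the hypothesis yields $\|\tilde f'(\beta)\|_{L^{p_1}_T}\lesssim M_1|\beta|^{\gamma'-1}$. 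Writing $\tilde f(\alpha)-\tilde f(y)=\int_y^\alpha \tilde f'(\beta)\,d\beta$ and applying H\"older,
\[
\bigl\|\delta_\alpha g'(\cdot,0)[\tilde f(\alpha)-\tilde f(y)]\bigr\|_{L^p_T}\lesssim M_2|\alpha|^{\gamma-\gamma'}\cdot M_1|\alpha|^{\gamma'-1}|\alpha-y|=M_1M_2|\alpha|^{\gamma-1}|\alpha-y|\le M_1M_2|\alpha-y|^\gamma,
\]
the last step because $|\alpha-y|\le|\alpha|$ gives $(|\alpha-y|/|\alpha|)^{1-\gamma}\le 1$.

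The main difficulty is precisely this close regime of \textbf{2)}: a naive combination of H\"older with \textbf{1)} at exponent $\gamma'$ produces a spurious factor $(|\alpha|/|\alpha-y|)^{\gamma-\gamma'}$ that prevents closing the estimate. It must be absorbed by passing to the integrated form $\int_y^\alpha\tilde f'(\beta)\,d\beta$ and exploiting that $|\beta|\sim|\alpha|$ on the interval, so integration produces $|\alpha|^{\gamma'-1}|\alpha-y|$ instead of $|\alpha-y|^{\gamma'}$, exactly cancelling the bad factor.
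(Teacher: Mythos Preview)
Your proof is correct, but it is organized differently from the paper's. The paper's key observation is that the decomposition in part \textbf{1)} already yields the sharper bound
\[
\|\tilde f(\alpha)-\tilde f(y)\|_{L^{p}_T}\lesssim |\alpha-y|\,(|\alpha|+|y|)^{\gamma-1}\,M_\gamma,
\]
obtained by writing $\frac{1}{y}\int_0^{-y}f'-\frac{1}{\alpha}\int_0^{-\alpha}f'$ with the common value $f'(-\alpha)$ subtracted inside each integral. With this in hand, part \textbf{2)} follows in one line from H\"older and the elementary inequality $|\alpha|^{\gamma-\gamma'}(|\alpha|+|y|)^{\gamma'-1}|\alpha-y|\le |\alpha-y|^\gamma$.

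Your route instead proves only the stated $|\alpha-y|^\gamma$ bound in part \textbf{1)} via the clean product decomposition $\tilde f=h\Phi$, and then recovers the missing gain in the close regime of part \textbf{2)} by differentiating and integrating $\tilde f'(\beta)$. This is a legitimate alternative: your derivative bound $\|\tilde f'(\beta)\|_{L^{p_1}_T}\lesssim M_1|\beta|^{\gamma'-1}$ (using $|\beta|\le\pi$ so that the bounded second piece is dominated by $|\beta|^{\gamma'-1}$) is exactly the infinitesimal form of the paper's sharper estimate. The trade-off is that the paper front-loads the work into part \textbf{1)} and gets part \textbf{2)} for free, while you keep part \textbf{1)} simpler at the cost of a case split and a separate computation in part \textbf{2)}. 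One minor point: your ``same side of $0$'' remark in the close regime should be understood modulo the $2\pi$-periodicity of $\tilde f$, so that after a suitable shift of $y$ the segment $[y,\alpha]$ indeed stays away from $0$.
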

		\begin{proof}
			\textbf{1)}\quad	
			Without loss of generality, assume
			$|\alpha|<\pi$ and $|\alpha|\leq|y|$, we have
			\begin{align*}
				\tilde f(\alpha)-\tilde f(y)=&\left(\frac{1}{2}\cot\left(\frac{y}{2}\right)-\frac{1}{y}\right)\int_0^{-y}f'(s)ds-\left(\frac{1}{2}\cot\left(\frac{\alpha}{2}\right)-\frac{1}{\alpha}\right)\int_0^{-\alpha}f'(s)ds\\
				&+\frac{1}{y}\int_0^{-y}f'(s)ds-\frac{1}{\alpha}\int_0^{-\alpha}f'(s)ds.
			\end{align*}
			Hence by Minkowski's inequality we obtain
			\begin{align*}
				\|\tilde f(\alpha)-\tilde f(y)\|_{L^p_T}\lesssim& \left|\frac{1}{2}\cot\left(\frac{\alpha}{2}\right)-\frac{1}{\alpha}\right|\left|\int_{-\alpha}^{-y}\|f'(s)\|_{L^p_T}ds\right|\\
				&\quad\quad+\left|\frac{1}{2}\cot\left(\frac{\alpha}{2}\right)-\frac{1}{\alpha}-\frac{1}{2}\cot\left(\frac{y}{2}\right)+\frac{1}{y}\right|\left|\int_{0}^{-y}\|f'(s)\|_{L^p_T}ds\right|\\
				&\quad\quad+\frac{1}{|y|}\left|\int_{-\alpha}^{-y}\|f'(s)-f'(-\alpha)\|_{L^p_T}ds\right|+\left|\int_0^{-\alpha}\|f'(s)-f'(-\alpha)\|_{L^p_T}ds\right|\frac{|\alpha-y|}{|\alpha||y|}.
			\end{align*}	
			Note that $\left|\frac{1}{2}\cot\left(\frac{\alpha}{2}\right)-\frac{1}{\alpha}\right|\lesssim 1$ and $\left|\frac{1}{2}\cot\left(\frac{\alpha}{2}\right)-\frac{1}{\alpha}-\frac{1}{2}\cot\left(\frac{y}{2}\right)+\frac{1}{y}\right|\lesssim |\alpha-y|$. Moreover, because $f$ is periodic, there holds
			$$
			\|f'(s)\|_{L^p_T}\lesssim \sup_z\frac{\|\delta_z f'\|_{L^p_TL^\infty}}{|z|^\gamma}.$$
			Hence 
			\begin{align*}
				\|\tilde f(\alpha)-\tilde f(y)\|_{L^p_T}\lesssim& \left(|y-\alpha|+\frac{|y-\alpha||y|^\gamma}{|y|}+\frac{|\alpha|^\gamma|y-\alpha|}{|y|}\right)\sup_z\frac{\|\delta_z f'\|_{L^p_TL^\infty}}{|z|^\gamma}\\
				\lesssim&|y-\alpha|^\gamma\sup_z\frac{\|\delta_z f'\|_{L^p_TL^\infty}}{|z|^\gamma}.
			\end{align*}	
			Then we have the result.\\
			\textbf{2)}\quad By \textbf{1)} there holds 
			$$
			\|\tilde f(\alpha)-\tilde f(y)\|_{L^{p_1}_T}\lesssim |y-\alpha|(|y|+|\alpha|)^{\gamma'-1}\sup_z\frac{\|\delta_z f'\|_{L^{p_1}_TL^\infty}}{|z|^{\gamma'}}.
			$$
			Hence by H\"{o}lder's inequality we obtain
			\begin{align*}
				&	\left\| |\delta_\alpha g'(0)||\tilde f(\alpha)-\tilde f(y) |\right\|_{L^p_T}\\&\lesssim |\alpha|^{\gamma-\gamma'}|y-\alpha|(|y|+|\alpha|)^{{\gamma'-1}}\left(\sup_\alpha \frac{\|\delta_\alpha f'\|_{L^{p_1}_TL^\infty}}{|\alpha|^{\gamma'}}\right)\left(\sup_\alpha \frac{\|\delta_\alpha g'\|_{L^{p_2}_TL^\infty}}{|\alpha|^{\gamma-\gamma'}}\right)\\
				&\lesssim |y-\alpha|^\gamma \left(\sup_\alpha \frac{\|\delta_\alpha f'\|_{L^{p_1}_TL^\infty}}{|\alpha|^{\gamma'}}\right)\left(\sup_\alpha \frac{\|\delta_\alpha g'\|_{L^{p_2}_TL^\infty}}{|\alpha|^{\gamma-\gamma'}}\right).
			\end{align*}
			This implies \textbf{2)}	.	
		\end{proof}
		\begin{lemma}\label{lemI12}
			For any function $f,g:{\mathbb{R}}\rightarrow\mathbb{R}$, denote $\tilde g(\alpha)=\Delta_\alpha g(0)$. Then for any $\sigma\in(0,1)$ and $0<\varepsilon< 10^{-3}\min\{1-\sigma,\sigma\}$, there holds
			$$
			\left|\int f(\alpha)(\partial_\alpha \tilde g)(\alpha)d\alpha\right|\lesssim \|\Lambda^\sigma f\|_{L^\infty}\|\Lambda^{1-\sigma+\varepsilon}g\|_{L^\infty}^\frac{1}{2}\|\Lambda^{1-\sigma-\varepsilon}g\|_{L^\infty}^\frac{1}{2}.$$
		\end{lemma}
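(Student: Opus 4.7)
The plan is to combine a mean-zero reduction with a scale-balancing argument between the two H\"older estimates $\|\Lambda^{1-\sigma\pm\varepsilon}g\|_{\infty}$. The key inputs from earlier in the paper are \eqref{interpfrac2}, which gives the pointwise H\"older estimates
$$|f(\alpha)-f(0)|\lesssim \|\Lambda^{\sigma}f\|_{\infty}|\alpha|^{\sigma}, \qquad |\tilde g(\alpha)|\lesssim |\alpha|^{-\sigma\pm\varepsilon}\|\Lambda^{1-\sigma\pm\varepsilon}g\|_{\infty}.$$
Since $\sigma+\varepsilon<1$, the bound with the minus sign shows $\tilde g(\alpha)\to 0$ as $|\alpha|\to\infty$, so $\int \partial_\alpha\tilde g(\alpha)\,d\alpha=0$ and therefore
$$I:=\int f(\alpha)\partial_\alpha\tilde g(\alpha)\,d\alpha=\int(f(\alpha)-f(0))\partial_\alpha\tilde g(\alpha)\,d\alpha.$$

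For smooth $g$, I would use the identity $\partial_\alpha\tilde g(\alpha)=g'(-\alpha)/\alpha-\tilde g(\alpha)/\alpha$ to split $I=I_1+I_2$, with
$$I_1:=\int (f(\alpha)-f(0))\frac{g'(-\alpha)}{\alpha}\,d\alpha,\qquad I_2:=-\int (f(\alpha)-f(0))\frac{\tilde g(\alpha)}{\alpha}\,d\alpha.$$
To estimate $I_2$, split the integration at $|\alpha|=\lambda$: use the H\"older bound with exponent $1-\sigma+\varepsilon$ on $\{|\alpha|\leq\lambda\}$ and with exponent $1-\sigma-\varepsilon$ on $\{|\alpha|>\lambda\}$. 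The resulting $|\alpha|$-integrals are $\int_{|\alpha|\leq\lambda}|\alpha|^{\varepsilon-1}d\alpha\sim \lambda^{\varepsilon}/\varepsilon$ and $\int_{|\alpha|>\lambda}|\alpha|^{-\varepsilon-1}d\alpha\sim \lambda^{-\varepsilon}/\varepsilon$, both finite thanks to the $\varepsilon$-cushion, so
$$|I_2|\lesssim \|\Lambda^{\sigma}f\|_{\infty}\Bigl(\|\Lambda^{1-\sigma+\varepsilon}g\|_{\infty}\lambda^{\varepsilon}+\|\Lambda^{1-\sigma-\varepsilon}g\|_{\infty}\lambda^{-\varepsilon}\Bigr).$$
Minimizing by choosing $\lambda^{2\varepsilon}=\|\Lambda^{1-\sigma-\varepsilon}g\|_{\infty}/\|\Lambda^{1-\sigma+\varepsilon}g\|_{\infty}$ produces the desired geometric-mean bound on $|I_2|$.

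For $I_1$, substitute $\beta=-\alpha$ to obtain $I_1=\int\tilde f(\beta)g'(\beta)\,d\beta$ (using $(f(-\beta)-f(0))/\beta=-\tilde f(\beta)$ and the extra sign from $1/\alpha$), then integrate by parts; the boundary terms $\tilde f\cdot(g-g(0))$ vanish because $|\tilde f(\beta)|\lesssim |\beta|^{\sigma-1}$ and $|g(\beta)-g(0)|\lesssim |\beta|^{1-\sigma-\varepsilon}$ give the product $\lesssim |\beta|^{-\varepsilon}\to 0$. This yields $I_1=-\int \tilde f'(\beta)(g(\beta)-g(0))\,d\beta$. The integral now has exactly the same structural form as $I_2$ with the roles of $f$ and $g$ exchanged: the H\"older bound $|g(\beta)-g(0)|\lesssim |\beta|^{1-\sigma\pm\varepsilon}\|\Lambda^{1-\sigma\pm\varepsilon}g\|_{\infty}$ plays the role of $|\tilde g|$, and the analogue of $|F|/|\alpha|$ is $|\tilde f'(\beta)|$, which satisfies the analogous estimate $|\tilde f'(\beta)|\lesssim \|\Lambda^{\sigma}f\|_{\infty}|\beta|^{\sigma-2}$ (interpreted in a suitably Littlewood--Paley regularized sense). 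The same splitting at $|\beta|=\mu$ and optimization produces the identical geometric-mean bound on $|I_1|$.

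The main obstacle is the marginal non-integrability at the critical exponent $\varepsilon=0$: a direct bound using only $\|\Lambda^{1-\sigma}g\|_{\infty}$ would give $\int|\alpha|^{\sigma}|\partial_\alpha\tilde g|\,d\alpha \sim \int |\alpha|^{-1}d\alpha$, a logarithmic divergence. The $\varepsilon$-gap between the two supplied H\"older bounds is precisely what is needed to separate the two tails of the integral, and the geometric mean emerges as the optimal balance from the minimization over $\lambda$. The technical subtlety in $I_1$ is that $\tilde f'$ is a genuine distribution when $f$ is only H\"older; handling this rigorously requires decomposing $f=\sum_j f_j$ dyadically, carrying out the splitting argument per block, and then summing with the help of Bernstein and the usual geometric-series trick.
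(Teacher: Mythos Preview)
Your treatment of $I_2$ is correct and is indeed the heart of the scale-balancing idea. The gap is in $I_1$. After integrating by parts you reach $I_1=-\int\tilde f'(\beta)(g(\beta)-g(0))\,d\beta$ and assert $|\tilde f'(\beta)|\lesssim\|\Lambda^\sigma f\|_\infty|\beta|^{\sigma-2}$. This bound is false. From $\tilde f'(\beta)=f'(-\beta)/\beta-\tilde f(\beta)/\beta$, only the second piece decays like $|\beta|^{\sigma-2}$; the first is $f'(-\beta)/\beta$, which at infinity is merely $O(|\beta|^{-1})$ with a constant governed by $\|f'\|_\infty$, not by $\|\Lambda^\sigma f\|_\infty$. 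Plugging this into your far-region estimate gives an integrand $\sim |\beta|^{-1}\cdot|\beta|^{1-\sigma-\varepsilon}=|\beta|^{-\sigma-\varepsilon}$, which is \emph{not} integrable at infinity since $\sigma+\varepsilon<1$. The Littlewood--Paley fix you sketch does not close either: for a block $f_j$ one has $\|f_j'\|_\infty\lesssim 2^{j(1-\sigma)}\|\Lambda^\sigma f\|_\infty$, so the large-$\beta$ integral for each $j$ still diverges and there is no geometric gain in $j$ to sum. The ``structural symmetry'' with $I_2$ is illusory precisely because $\tilde f'$ is one full derivative worse than $\tilde f/\beta$.

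The paper's proof avoids ever needing a pointwise bound on $f'$ or $g'$. It writes $f=c_\sigma|\cdot|^{\sigma-1}*(\Lambda^\sigma f)$ and $g=G*g_1$, where $G=\mathcal{F}^{-1}(1/\phi)$ for a multiplier $\phi$ equal to $|\xi|^{1-\sigma-\varepsilon}$ near $0$ and $|\xi|^{1-\sigma+\varepsilon}$ near $\infty$; a separate lemma gives $\|g_1\|_\infty\lesssim\|\Lambda^{1-\sigma+\varepsilon}g\|_\infty+\|\Lambda^{1-\sigma-\varepsilon}g\|_\infty$. The derivative $\partial_\alpha$ then lands on the explicit Riesz kernel $|\alpha-z|^{\sigma-1}$ rather than on $f$ itself, and one is left with a concrete triple integral in $(\alpha,y,z)$ whose finiteness is checked by case analysis using kernel bounds on $G$. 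A final scaling argument turns the sum of the two $g$-norms into their geometric mean. This kernel-level bookkeeping is exactly what your direct integration-by-parts in $I_1$ is missing.
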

		We postpone the proof in the appendix. Applying H\"{o}lder's inequality and Young's inequality in the proof of Lemma \ref{lemI12}, we have
		\begin{remark}\label{remmovedeT}
			For any function $f,g:\mathbb{R}^+\times{\mathbb{R}}\rightarrow\mathbb{R}$, denote $\tilde g(\alpha)=\Delta_\alpha g(0)$. Then for any $\sigma\in(0,1)$, $p>1$ and $0<\varepsilon< 10^{-3}\min\{1-\sigma,\sigma\}$, there holds
			\begin{align*}
				&\left\|\int f(\alpha)(\partial_\alpha \tilde g)(\alpha)d\alpha\right\|_{L^p_T}\lesssim\sup_\alpha \int\min_{+,-}\left\{\|\delta_z f(\alpha)\|_{L^{p_{1\pm}}}\left\|\|\Lambda^{1-\sigma+\varepsilon}g\|_{L^\infty}^\frac{1}{2}\|\Lambda^{1-\sigma-\varepsilon}g\|_{L^\infty}^\frac{1}{2}\right\|_{L^{p_{2\mp}}}\right\}\frac{dz}{|z|^{1+\sigma}},
			\end{align*}
			where $p_{1\pm}, p_{2\pm}$ satisfies $\frac{1}{p}=\frac{1}{p_{1+}}+\frac{1}{p_{2-}}=\frac{1}{p_{1-}}+\frac{1}{p_{2+}}$. 
		\end{remark} 
		\begin{lemma}\label{moveabove}
			For any function $g:[0,T]\times\mathbb{R}\rightarrow \mathbb{R}$, and any $b,\sigma\in(0,1), p\in(1,\infty)$, if  $b+\sigma\leq1-\frac{\varepsilon'}{100}$,  there holds
			$$
			\sup_\alpha\frac{\|\delta_\alpha g\|_{L^p_TL^\infty}}{|\alpha|^{b+\sigma}}\lesssim\left(\sup_\alpha\frac{\|\delta_\alpha \Lambda^{\sigma-\varepsilon} g\|_{L^p_TL^\infty}}{|\alpha|^{b+\varepsilon}}\right)^\frac{1}{2}\left(\sup_\alpha\frac{\|\delta_\alpha \Lambda^{\sigma+\varepsilon} g\|_{L^p_TL^\infty}}{|\alpha|^{b-\varepsilon}}\right)^\frac{1}{2},$$
			where $0<\varepsilon<10^{-3}\min\left\{b,\sigma,\varepsilon'\right\}$.
		\end{lemma}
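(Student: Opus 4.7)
The estimate is an interpolation inequality, and the cleanest route is a Littlewood--Paley decomposition $g=\sum_{j}\Delta_j g$. Denote the two factors on the right-hand side by
$$
M_-=\sup_\alpha\frac{\|\delta_\alpha \Lambda^{\sigma-\varepsilon}g\|_{L^p_T L^\infty}}{|\alpha|^{b+\varepsilon}},\qquad M_+=\sup_\alpha\frac{\|\delta_\alpha \Lambda^{\sigma+\varepsilon}g\|_{L^p_T L^\infty}}{|\alpha|^{b-\varepsilon}}.
$$
Under the hypotheses $\varepsilon<10^{-3}\min\{b,\sigma,\varepsilon'\}$ and $b+\sigma\leq 1-\varepsilon'/100$, all four Hölder exponents $b\pm\varepsilon$ and $\sigma\pm\varepsilon$ lie in $(0,1)$, so the standard finite--difference characterization of Besov regularity is available for each of the three objects in the inequality.

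First I would show that every dyadic block of $g$ is controlled by \emph{both} $M_-$ and $M_+$ with the same weight $2^{j(b+\sigma)}$. Writing $\Delta_j h=\phi_j\ast h$ with $\int\phi_j=0$, the mean-zero identity $\Delta_j h(x)=-\int\phi_j(z)\,\delta_z h(x)\,dz$ gives
$$
\|\Delta_j \Lambda^{\sigma-\varepsilon}g\|_{L^p_T L^\infty}\leq\int|\phi_j(z)|\,\|\delta_z\Lambda^{\sigma-\varepsilon}g\|_{L^p_T L^\infty}\,dz\lesssim M_-\int|\phi_j(z)||z|^{b+\varepsilon}\,dz\lesssim M_-\,2^{-j(b+\varepsilon)}.
$$
Bernstein's inequality $\|\Delta_j \Lambda^{\sigma-\varepsilon}g\|_{L^\infty}\sim 2^{j(\sigma-\varepsilon)}\|\Delta_j g\|_{L^\infty}$ then yields $2^{j(b+\sigma)}\|\Delta_j g\|_{L^p_T L^\infty}\lesssim M_-$, and the parallel computation with $\Lambda^{\sigma+\varepsilon}g$ gives $2^{j(b+\sigma)}\|\Delta_j g\|_{L^p_T L^\infty}\lesssim M_+$. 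Combining,
$$
2^{j(b+\sigma)}\|\Delta_j g\|_{L^p_T L^\infty}\lesssim \min(M_-,M_+)\leq M_-^{1/2}M_+^{1/2}.
$$

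Second, to reconstruct $\|\delta_\alpha g\|_{L^p_T L^\infty}$ I would split the frequencies at $j_0$ defined by $2^{j_0}\sim|\alpha|^{-1}$. For the high-frequency tail use $\|\delta_\alpha \Delta_j g\|_{L^\infty}\leq 2\|\Delta_j g\|_{L^\infty}$ and sum the geometric series $\sum_{j\geq j_0}2^{-j(b+\sigma)}\sim|\alpha|^{b+\sigma}$, which converges since $b+\sigma>0$. For the low-frequency part use $\|\delta_\alpha \Delta_j g\|_{L^\infty}\leq|\alpha|\,2^j\|\Delta_j g\|_{L^\infty}$ (from the mean-value theorem together with Bernstein for the derivative) and sum $|\alpha|\sum_{j<j_0}2^{j(1-b-\sigma)}\sim|\alpha|^{b+\sigma}$. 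Stacking the two pieces yields $\|\delta_\alpha g\|_{L^p_T L^\infty}\lesssim|\alpha|^{b+\sigma}M_-^{1/2}M_+^{1/2}$, which is the stated inequality.

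\textbf{Main obstacle.} The delicate point is the convergence of the low-frequency series, which requires $1-b-\sigma>0$ strictly; this is exactly why the hypothesis $b+\sigma\leq 1-\varepsilon'/100$ is imposed, as the strict gap prevents a logarithmic loss at the endpoint. The high-frequency sum converges trivially because $\sigma>\varepsilon>0$, and the requirement that $\varepsilon$ be very small compared with $b,\sigma,\varepsilon'$ is what guarantees that all the shifted Hölder exponents stay inside $(0,1)$ so that the mean-zero convolution identity used in Step~1 truly lets the finite-difference norms on the right control the dyadic blocks of $g$.
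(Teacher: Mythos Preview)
Your argument is correct and takes a genuinely different route from the paper. The paper never invokes Littlewood--Paley: instead it writes $g=\Lambda^{b}(\Lambda^{-b}g)$ via the integral formula $\Lambda^{b}h=c\int\delta_{z}h\,|z|^{-1-b}\,dz$, splits the $z$-integral at a free scale $\lambda$, and on the two pieces replaces $\Lambda^{-b}g$ by $\Lambda^{-l_{1}}g_{1}$ (with $g_{1}=\Lambda^{\sigma-\varepsilon}g$, $l_{1}=b+\sigma-\varepsilon$) and by $\Lambda^{-l_{2}}g_{2}$ respectively. Applying $\delta_{\alpha}$, using $\int|\delta_{\alpha}|\cdot|^{l_{k}-1}|\lesssim|\alpha|^{l_{k}}$, and Minkowski gives $\|\delta_{\alpha}g\|_{L^{p}_{T}L^{\infty}}\lesssim|\alpha|^{l_{1}}\lambda^{\varepsilon}M_{-}+|\alpha|^{l_{2}}\lambda^{-\varepsilon}M_{+}$, after which optimizing in $\lambda$ produces the geometric mean. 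Your dyadic approach is more modular and in fact yields the slightly stronger bound $\min(M_{-},M_{+})$ before you pass to $(M_{-}M_{+})^{1/2}$; the paper's kernel-splitting argument genuinely needs both pieces (the single-sided $z$-integral diverges) and so lands directly on the geometric mean. Either way the constraint $b+\sigma<1$ enters for the same reason---convergence of the low-frequency sum in your proof, integrability of $\delta_{\alpha}|\cdot|^{l_{k}-1}$ in theirs.
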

		\begin{proof}
			Denote $g_1=\Lambda^{\sigma-\varepsilon} g$, $g_2=\Lambda^{\sigma+\varepsilon} g$, $l_1=b+\sigma-\varepsilon$, $l_2=b+\sigma+\varepsilon$, then
$$
				g(x)=c\int\delta_z (\Lambda^{-b}g)(x)\frac{dz}{|z|^{1+b}}=c\int_{|z|\leq\lambda}\delta_z(\Lambda^{-l_1}g_1)(x)\frac{dz}{|z|^{1+b}}+c\int_{|z|\geq\lambda}(\delta_z\Lambda^{-l_2}g_2)(x)\frac{dz}{|z|^{1+b}}.$$
			Then 
$$
				\delta_\alpha g(x)=c\int_{|z|\leq\lambda}\int\delta_\alpha\left(\frac{1}{|\cdot|^{1-l_1}}\right)(x-y)\delta_zg_1(y)\frac{dydz}{|z|^{1+b}}+c\int_{|z|\geq\lambda}\int\delta_\alpha\left(\frac{1}{|\cdot|^{1-l_2}}\right)(x-y)\delta_zg_2(y)\frac{dydz}{|z|^{1+b}}.$$
			It is easy to check that
			$$
			\int \left|\delta_\alpha\left(\frac{1}{|\cdot|^{1-l_k}}\right)(y)\right|dy\lesssim |\alpha|^{l_k}, \quad\quad k=1,2.$$
			Hence by Minkowski inequality we obtain
			\begin{align*}
				\|\delta_\alpha g\|_{L^p_TL^\infty}&\lesssim |\alpha |^{l_1}\int_{|z|\leq\lambda}\|\delta_zg_1\|_{L^p_TL^\infty}\frac{dz}{|z|^{1+b}}+|\alpha |^{l_2}\int_{|z|\geq\lambda}\|\delta_zg_2\|_{L^p_TL^\infty}\frac{dz}{|z|^{1+b}}\\
				&\lesssim |\alpha |^{l_1}\lambda^\varepsilon\sup_z\frac{\|\delta_zg_1\|_{L^p_TL^\infty}}{|z|^{b+\varepsilon}}+ |\alpha |^{l_2}\lambda^{-\varepsilon}\sup_z\frac{\|\delta_zg_2\|_{L^p_TL^\infty}}{|z|^{b-\varepsilon}}\\
				&\lesssim |\alpha|^{b+\sigma} \left(\sup_z\frac{\|\delta_z \Lambda^{\sigma-\varepsilon} f\|_{L^p_TL^\infty}}{|z|^{b+\varepsilon}}\right)^\frac{1}{2}\left(\sup_z\frac{\|\delta_z \Lambda^{\sigma+\varepsilon} f\|_{L^p_TL^\infty}}{|z|^{b-\varepsilon}}\right)^\frac{1}{2},
			\end{align*}
			where we take $\lambda=\left(|\alpha |^{l_1}\sup_z\frac{\|\delta_zg_1\|_{L^p_TL^\infty}}{|z|^{b+\varepsilon}}\right)^{-\frac{1}{2\varepsilon}} \left(|\alpha |^{l_2}\sup_z\frac{\|\delta_zg_2\|_{L^p_TL^\infty}}{|z|^{b-\varepsilon}}\right)^{\frac{1}{2\varepsilon}}$. 
			Then we obtain the result.
		\end{proof}	
		
		\begin{lemma}\label{lemRiesz}
			For any function $f:[0,+\infty)\rightarrow\mathbb{R}$, let $B, T>0$, $p\in(1,\infty)$, then for any $r,\sigma\in[0,1]$ such that $\frac{1}{p}<r+\sigma<1$, 
			%
			there holds
			$$
			\left\|\int_0^t\frac{1}{(t-\tau)^r}\min\left\{1,\frac{B}{t-\tau}\right\}f(\tau)d\tau\right\|_{L^p_T}\lesssim B^\sigma\|f\|_{L^q_T}, $$
			where $q=\frac{p}{1+(1-r-\sigma)p}$.
		\end{lemma}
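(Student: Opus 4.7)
The plan is to reduce the estimate to the one-dimensional Hardy--Littlewood--Sobolev inequality (or equivalently the weak-type Young convolution inequality) via a simple pointwise bound on the kernel. For any $\sigma\in[0,1]$ and $s>0$, I would first establish the elementary inequality
$$
\min\!\left\{1,\frac{B}{s}\right\}\leq \left(\frac{B}{s}\right)^{\sigma},
$$
by splitting into the two cases $s\le B$ (where the left side equals $1$ and $(B/s)^\sigma\geq 1$ since $B/s\geq 1$ and $\sigma\geq 0$) and $s\geq B$ (where the left side equals $B/s\leq 1$ and $(B/s)^\sigma\geq (B/s)^1$ since $B/s\leq 1$ and $\sigma\leq 1$). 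Multiplying by $s^{-r}$, this yields
$$
\frac{1}{(t-\tau)^{r}}\min\!\left\{1,\frac{B}{t-\tau}\right\}\leq \frac{B^{\sigma}}{(t-\tau)^{r+\sigma}}.
$$

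Extending $f$ by zero outside $[0,T]$, the left-hand side of the lemma is then pointwise dominated by $B^{\sigma}\,(f*K)(t)$ with $K(s)=s_{+}^{-(r+\sigma)}$. Since $0<r+\sigma<1$, the kernel $K$ lies in $L^{1/(r+\sigma),\infty}(\mathbb{R})$ with quasinorm independent of $B$ and $T$. The hypothesis $\tfrac{1}{p}<r+\sigma<1$ combined with $p\in(1,\infty)$ forces $1<q<p<\infty$, and the defining relation $\tfrac{1}{q}=\tfrac{1}{p}+1-r-\sigma$ rewrites exactly as the Young exponent identity $\tfrac{1}{p}+1=\tfrac{1}{q}+(r+\sigma)$. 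The weak-type Young inequality therefore gives
$$
\|f*K\|_{L^{p}_{T}}\lesssim \|f\|_{L^{q}_{T}},
$$
and multiplying by $B^{\sigma}$ concludes the argument.

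The only delicate step is verifying that the constraints of the weak Young/HLS inequality, namely $1<q<p<\infty$ and $1/(r+\sigma)\in(1,\infty)$, are precisely encoded by the stated hypotheses $p\in(1,\infty)$ and $\tfrac{1}{p}<r+\sigma<1$; once this dictionary is in place the remainder of the proof is a direct invocation of a classical convolution bound, and no subtle cancellation or time-dependent argument is required.
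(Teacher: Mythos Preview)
Your proof is correct and follows essentially the same route as the paper: both use the pointwise bound $\min\{1,B/s\}\le (B/s)^\sigma$ to reduce to convolution against $|s|^{-(r+\sigma)}$, and then invoke the boundedness of that operator from $L^q$ to $L^p$. The paper phrases this last step as the mapping property of the Riesz potential $\mathbf{I}_{1-r-\sigma}$, whereas you phrase it as weak-type Young/HLS, but these are the same classical inequality.
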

		\begin{proof}
			Note that 
			$
			\min\left\{1,\frac{B}{t-\tau}\right\}\lesssim \left(\frac{B}{t-\tau}\right)^\sigma.
			$
			Then we have $$
			\left|\int_0^t\frac{1}{(t-\tau)^r}\min\left\{1,\frac{B}{t-\tau}\right\}f(\tau)d\tau\right|\lesssim \mathbf{I}_{1-r-\sigma}(f\mathbf{1}_{[0,T]})(t),$$
			where $\mathbf{I}_a$ is the Riesz potential in $\mathbb{R}$ which satisfies $\|\mathbf{I}_a\|_{L^{q_1}\rightarrow L^{q_2}}\lesssim 1$ with $a+\frac{1}{q_2}=\frac{1}{q_1}$ and $q_1\in(1,\frac{1}{a})$.
			Then we get the result.
		\end{proof} 
		\vspace{0.3cm}\\
		Recall the definition of $Q_w(t)$ in \eqref{defQ}.	The following is a key lemma to prove the propogation of the well-streched condition. 
		\begin{lemma}\label{lemkappa}
			For any function $w(t,x):\mathbb{R}^+\times\mathbb{R}\rightarrow\mathbb{R}^2$. Denote $\kappa_w(t)=\sup_{\tau\in(0,t)}\sup_{\alpha,s}\frac{|\alpha|}{|\delta_\alpha w(\tau,s)|}$. If there exists $0<\varepsilon<\frac{1}{100}\min \{\kappa_w(0),\kappa_w(0)^{-1}\}$ such that 
			\begin{align*}
				Q_{w}(t)\leq \varepsilon, \quad\quad\text{and}\quad\quad \|w(t)\|_{\dot C^\frac{3}{2}}\leq \varepsilon t^{-\frac{1}{2}}~~\text{for any}~t\in[0,T].
			\end{align*}
			Then there holds $$
			\kappa_w(T)\leq 2\kappa_w(0).$$
		\end{lemma}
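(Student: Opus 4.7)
The plan is to show that $|\Delta_\alpha w(\tau, s)| \geq \frac{1}{2\kappa_w(0)}$ for every $\tau \in (0, T]$ and $\alpha, s \in (-\pi, \pi)$, which is exactly the claim $\kappa_w(T) \leq 2\kappa_w(0)$. Writing $\kappa_0 := \kappa_w(0)$, the hypothesis supplies the uniform lower bound $|\Delta_\alpha w(0, s)| \geq \kappa_0^{-1}$ together with the two time-dependent pieces of information
\begin{equation*}
\left|\frac{1}{|\Delta_\alpha w(\tau, s)|} - \frac{1}{|\Delta_\alpha w(0, s)|}\right| \leq \varepsilon \frac{\tau^{\varepsilon'}}{|\alpha|^{\varepsilon'}}, \qquad \|w(\tau)\|_{\dot C^{3/2}} \leq \varepsilon\, \tau^{-1/2}.
\end{equation*}
The $Q_w$ inequality is effective only while $|\alpha|$ is not too small; the $\dot C^{3/2}$ bound is what repairs the estimate in the small-$|\alpha|$ regime.

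My first step is to introduce an intermediate length scale $\beta = \beta(\tau) \in (0, \pi)$, essentially $\beta \sim \tau$ (capped at, say, $\pi/2$ when $\tau$ is large), and to split on $|\alpha|$. In the regime $|\alpha| \geq \beta$ the $Q_w$ estimate gives directly
\begin{equation*}
\frac{1}{|\Delta_\alpha w(\tau, s)|} \leq \kappa_0 + \varepsilon\left(\tau/\beta\right)^{\varepsilon'} = \kappa_0 + O(\varepsilon),
\end{equation*}
so $|\Delta_\alpha w(\tau, s)| \geq \frac{1}{2\kappa_0}$ provided $\varepsilon\kappa_0$ is small enough. In the regime $|\alpha| < \beta$ I write $\Delta_\alpha w(\tau, s) = \frac{1}{\alpha}\int_{s-\alpha}^{s} w'(\tau, r)\, dr$ and use the H\"older-$\frac{1}{2}$ continuity of $w'(\tau, \cdot)$ with constant $\varepsilon\tau^{-1/2}$ to get $|\Delta_\alpha w(\tau, s) - w'(\tau, s - \alpha/2)| \lesssim \varepsilon \tau^{-1/2}|\alpha|^{1/2}$. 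Applying this at both scales $\alpha$ and $\beta$ yields
\begin{equation*}
|\Delta_\alpha w(\tau, s) - \Delta_\beta w(\tau, s)| \lesssim \varepsilon\, \tau^{-1/2}\beta^{1/2} \lesssim \varepsilon,
\end{equation*}
whence $|\Delta_\alpha w(\tau, s)| \geq |\Delta_\beta w(\tau, s)| - O(\varepsilon) \geq \frac{1}{\kappa_0 + O(\varepsilon)} - O(\varepsilon) \geq \frac{1}{2\kappa_0}$ by the previous case.

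The delicate step is the balance in the choice of $\beta$: the $Q_w$ error $\varepsilon(\tau/\beta)^{\varepsilon'}$ and the $\dot C^{3/2}$ correction $\varepsilon\tau^{-1/2}\beta^{1/2}$ push in opposite directions and are simultaneously $O(\varepsilon)$ precisely when $\beta \sim \tau$. The two-sided smallness hypothesis $\varepsilon < \frac{1}{100}\min\{\kappa_0, \kappa_0^{-1}\}$ is used in both directions: $\varepsilon\kappa_0 \ll 1$ controls the additive $O(\varepsilon)$ corrections relative to $\kappa_0^{-1}$, while $\varepsilon/\kappa_0 \ll 1$ controls the $Q_w$ correction relative to $\kappa_0$ inside the reciprocal $1/(\kappa_0 + O(\varepsilon))$. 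I expect the main technical nuisance to be handling the boundary case in which $\beta$ must be capped at $\pi/2$; there the factor $\tau^{\varepsilon'}$ appearing in the $Q_w$ term remains harmless because $\varepsilon'$ is chosen extremely small, and the $\dot C^{3/2}$ term improves since $\tau^{-1/2}\beta^{1/2}$ decreases once $\tau$ grows past $\beta$.
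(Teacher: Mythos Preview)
Your proposal is correct and follows essentially the same route as the paper: split on the size of $|\alpha|$ relative to the time scale (the paper's threshold is $|\alpha|=t$, your $\beta\sim\tau$), use the $Q_w$ bound directly for large $|\alpha|$, and for small $|\alpha|$ compare $\Delta_\alpha w$ to the slope at the threshold scale via the $\dot C^{3/2}$ estimate, then combine. The paper's write-up is slightly more compact (it bounds $|\Delta_\alpha w-\Delta_\beta w|\le 2\|w\|_{\dot C^{3/2}}|\alpha|^{1/2}$ directly rather than passing through $w'(\tau,s-\alpha/2)$, and it sets $|\alpha|=t$ at the very end rather than fixing the threshold up front), but the substance is the same, and you correctly flag the cap-at-$\pi/2$ nuisance that the paper leaves implicit.
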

		\begin{proof}
			For any $t\in[0,T]$, by $	Q_{w}(t)\leq \varepsilon$ one has for any $\alpha$
			$$\sup_s	\frac{1}{|\Delta_\alpha w(s)|}\leq \kappa_w(0)+\left(\frac{t}{|\alpha|}\right)^{\varepsilon'}\varepsilon.$$
			If $|\alpha|\geq t$, then we obtain $\sup |\Delta_\alpha w|^{-1}\leq 2\kappa_w(0)$. It remains to consider $|\alpha|\leq t$.
			Note that for any $|\beta|\leq|\alpha|$
			$$
			|\Delta_\alpha w(t,s)-\Delta_\beta w(t,s)|\leq 2 \|w\|_{\dot C^\frac{3}{2}}|\alpha|^\frac{1}{2}\leq2\left(\frac{|\alpha|}{t}\right)^{\frac{1}{2}}\varepsilon.$$
			Hence $$
			|\Delta_\beta w(t,s)|\geq \left(\kappa_w(0)+\left(\frac{t}{|\alpha|}\right)^{\varepsilon'}\varepsilon\right)^{-1}-2\left(\frac{|\alpha|}{t}\right)^{\frac{1}{2}}\varepsilon.$$
			We can take $|\alpha|=t$ in the right hand side, which leads to 
			$$
			|\Delta_\beta w(t,s)|\geq\left(\kappa_w(0)+\varepsilon\right)^{-1}-2\varepsilon\geq \frac{1}{2}\kappa_w(0)^{-1}.
			$$
			Then we complete the proof.
		\end{proof}
	\vspace{0.3cm}\\
		The following are some elementary properties of the function $H(x)$ which will be used to estimate $N$ and $\mathfrak{N}$ in section 4.
		\begin{lemma}\label{leH} Let $H$ be as defined in \eqref{Ne}. Then for any $A_1, A_2, A_3, A_4\in\mathbb{R}^2\backslash\{0\}$, there holds, 
			\begin{align}\label{H1}
				&\left|H(A_1)\right|\lesssim \frac{1}{|A_1|},\quad\quad\quad
				\left|H(A_1)-H(A_2)\right|\lesssim \left(\frac{1}{|A_1|^2}+\frac{1}{|A_2|^2}\right)|A_1-A_2|,		\\
				&\label{H2}	\left|(H(A_{1})-H(A_{2}))-(H(A_{3})-H(A_{4}))\right|\\&\quad\lesssim \left(\sum_{j=1}^{4}\frac{1}{|A_j|^2}\right)|(A_1-A_2)-(A_3-A_4)|+\left(\sum_{j=1}^{4}\frac{1}{|A_j|^3}\right)|A_3-A_4|\left(|A_3-A_1|+|A_4-A_2|\right),\nonumber
			\\
				&\left|D_H(A_1,A_2)\right|\lesssim \left(\sum_{j=1}^{2}\frac{1}{|A_j|^3}\right)|A_1-A_2|^2,\label{H3}\\
		\label{H4}
				&\left|D_H(A_1,A_2)-D_H(A_3,A_4)\right|\lesssim \left(\sum_{j=1}^{4}\frac{1}{|A_j|^3}\right)|(A_1-A_2)-(A_3-A_4)|\left(|A_1-A_2|+|A_3-A_4|\right)\\
				&\quad\quad\quad\quad\quad\quad\quad\quad	\quad\quad\quad\quad\quad\quad +\left(\sum_{j=1}^{4}\frac{1}{|A_j|^4}\right)|A_3-A_4|^2\left(|A_3-A_1|+|A_4-A_2|\right),\nonumber
			\end{align}
			where we denote $D_H(A_1,A_2)=H(A_1)-H(A_2)-(A_1-A_2)\cdot\nabla H(A_2)$.
		\end{lemma}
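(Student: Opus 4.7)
The plan is to exploit the fact that $H(x) = x_{i_1}x_{i_2}x_{i_3}/|x|^4$ is smooth on $\mathbb{R}^2\setminus\{0\}$ and positively homogeneous of degree $-1$, so that on $\mathbb{R}^2\setminus\{0\}$ one has the pointwise bounds
\begin{equation*}
|H(x)|\lesssim |x|^{-1},\quad |\nabla H(x)|\lesssim |x|^{-2},\quad |\nabla^2 H(x)|\lesssim |x|^{-3},\quad |\nabla^3 H(x)|\lesssim |x|^{-4}.
\end{equation*}
Each of the four inequalities will be obtained by combining these pointwise bounds with a Taylor expansion of the appropriate order; the recurring difficulty is that the segment joining two of the $A_j$'s may pass close to $0$, so in each estimate I will run a dichotomy between the regime where the displacement is small relative to the smallest $|A_j|$ (where Taylor applies cleanly) and the regime where it is comparable to $\min_j |A_j|$ (where the triangle inequality already gives the right size).

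For \eqref{H1}, the first bound is just homogeneity. For the second, set $m=\min(|A_1|,|A_2|)$. If $|A_1-A_2|\ge \tfrac{1}{2}m$, then $|H(A_1)-H(A_2)|\le |H(A_1)|+|H(A_2)|\lesssim m^{-1}\lesssim m^{-2}|A_1-A_2|\lesssim (|A_1|^{-2}+|A_2|^{-2})|A_1-A_2|$. Otherwise the segment $A_2+t(A_1-A_2)$, $t\in[0,1]$, stays in $\{|x|\gtrsim m\}$ and one integrates $\nabla H$ along it. The estimate \eqref{H3} is identical in spirit: in the small-displacement regime I use the Taylor remainder
\begin{equation*}
D_H(A_1,A_2)=\int_0^1 (1-t)\,\nabla^2 H\bigl(A_2+t(A_1-A_2)\bigr)(A_1-A_2,A_1-A_2)\,dt,
\end{equation*}
and the bound $|\nabla^2 H|\lesssim |A|^{-3}$ along the segment; in the large-displacement regime I bound $|D_H|\le |H(A_1)|+|H(A_2)|+|A_1-A_2|\,|\nabla H(A_2)|$ directly and use $|A_1-A_2|\gtrsim m$ to convert into the required $|A_1-A_2|^2\sum_j|A_j|^{-3}$.

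Inequalities \eqref{H2} and \eqref{H4} are the same arguments carried out to one more order of cancellation. For \eqref{H2} I write
\begin{equation*}
H(A_1)-H(A_2)=\int_0^1\nabla H\bigl(A_2+t(A_1-A_2)\bigr)\cdot (A_1-A_2)\,dt,
\end{equation*}
and similarly for $H(A_3)-H(A_4)$, then subtract and split the integrand into a piece proportional to $(A_1-A_2)-(A_3-A_4)$ (bounded by $|\nabla H|\lesssim |A_j|^{-2}$) and a piece where $\nabla H$ itself is differenced (bounded by $|\nabla^2 H|\lesssim |A_j|^{-3}$ times the path displacement $|A_1-A_3|+|A_2-A_4|$, multiplied by $|A_3-A_4|$). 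For \eqref{H4} the Taylor remainder formulation of $D_H$ is differenced in the same way, producing one term with $\nabla^2 H$ hitting the mismatch $(A_1-A_2)-(A_3-A_4)$ and one with $\nabla^3 H$ hitting the segment difference; the homogeneity bounds then deliver $\sum |A_j|^{-3}$ and $\sum |A_j|^{-4}$ respectively. In each case I again use the dichotomy above to handle segments that might cross near the origin. The estimates are essentially bookkeeping once the dichotomy is set up; no new idea is needed, and the main obstacle is simply tracking all four points simultaneously in \eqref{H4}.
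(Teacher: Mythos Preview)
Your approach is correct and is the standard way to prove such estimates: use that $H$ is smooth away from the origin and homogeneous of degree $-1$, hence $|\nabla^k H(x)|\lesssim |x|^{-1-k}$, and combine Taylor expansions with a small/large displacement dichotomy to avoid segments passing near the origin. The paper itself does not supply a proof of this lemma; it is stated as an elementary fact and used directly in Section~4, so there is nothing to compare against beyond noting that your sketch fills in exactly what the authors omit.

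One small remark on the bookkeeping in \eqref{H2} and \eqref{H4}: the dichotomy has to be run on the right quantity. For \eqref{H2}, with $m=\min_j|A_j|$, the clean split is (i) $|A_1-A_2|<\tfrac12 m$ and $|A_3-A_4|<\tfrac12 m$, where both integration paths stay in $\{|x|\gtrsim m\}$ and your subtraction of the two integral representations goes through; versus (ii) $\max(|A_1-A_2|,|A_3-A_4|)\ge \tfrac12 m$. In case (ii) it is cleanest to use the \emph{cross} decomposition
\[
(H(A_1)-H(A_2))-(H(A_3)-H(A_4))=(H(A_1)-H(A_3))-(H(A_2)-H(A_4))
\]
together with \eqref{H1}, and then absorb the resulting $(\sum|A_j|^{-2})(|A_1-A_3|+|A_2-A_4|)$ into the second term of the right-hand side using $\max(|A_1-A_2|,|A_3-A_4|)\gtrsim m$ (when $|A_3-A_4|\gtrsim m$) or into the first term (when $|A_3-A_4|\ll m$ forces $|(A_1-A_2)-(A_3-A_4)|\gtrsim m$). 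The analogous split handles \eqref{H4}. With this refinement your outline is complete.
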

		\section{Regularity of the Nonlinear Parabolic Equation}\label{sectionparabolic}
		Consider the regularity of the following parabolic equation
		\begin{equation}\label{paraboliceq}
			\begin{split}
				\partial_tf(t,x)+\frac{1}{4}\Lambda f(t,x)&=G(t,x),\quad\quad (t,x)\in \mathbb{R}^+\times \mathbb{R},\\
				f(0,x)&=f_0(x).
			\end{split}
		\end{equation}
		We have the kernel
		\begin{align*}
			K(t,x)=\mathcal{F}^{-1}\left(e^{-\frac{t|\xi|}{4}}\right)(x)=\frac{8t}{t^2+64\pi^2 x^2},
		\end{align*}
		where $\mathcal{F}^{-1}$ is the inverse Fourier transform in $\mathbb{R}$. 	
			The solution to system \eqref{paraboliceq} has the formula
		\begin{align*}
			f(t,x)&=\int K(t,x-y)f_0(y)dy+\int_0^t\int K(t-\tau,x-y)G(\tau,y)dyd\tau\\
			&:=f_L(t,x)+f_N(t,x).
		\end{align*}
		It is easy to check that $K(t,y)>0$ and $\int_{\mathbb{R}} K(t,y)dy=1$.
		Moreover, we have the following properties for the kernel
		\begin{align}\label{a1}
			\|\delta_\alpha\partial_x^kK(t,\cdot)\|_{L^1}&\lesssim\min\left\{1,|\alpha|t^{-1}\right\}t^{-k},\\
			\|\delta_\alpha\partial_x^k\mathcal{H}\Lambda^{\gamma} K(t,\cdot)\|_{L^1}&\lesssim\min\left\{1,|\alpha|t^{-1}\right\}t^{-(k+{\gamma})},\label{fractionald}
		\end{align}
		for $k=0,1,2$, $\gamma\in(0,1)$.

			\begin{lemma}\label{lemequiv}
			Let $\tilde{ \mathcal{G}}$ be the function space associate to the norm $\|\cdot\|_{\tilde {\mathcal{G}}}$, then 
			\begin{align*}
				\tilde {\mathcal{G}}=\dot B^0_{\infty,\infty}.
			\end{align*}
		\end{lemma}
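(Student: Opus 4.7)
The plan is to establish both inclusions by combining Littlewood--Paley analysis with the semigroup characterization of $\dot B^0_{\infty,\infty}$: since $K(t,\cdot)=e^{-t\Lambda/4}$ is a Poisson-type semigroup of order one, one has
\begin{equation*}
\|g\|_{\dot B^0_{\infty,\infty}}\sim \sup_{t>0} t^s\|\Lambda^s K(t,\cdot)\ast g\|_{L^\infty}\qquad\text{for any fixed }s>0.
\end{equation*}

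For the inclusion $\|g\|_{\tilde{\mathcal{G}}}\lesssim \|g\|_{\dot B^0_{\infty,\infty}}$ I would decompose $g=\sum_j \dot\Delta_j g$ into homogeneous Littlewood--Paley blocks. Using Bernstein's inequality together with the exponential decay $\|K(t,\cdot)\ast \dot\Delta_j g\|_{L^\infty}\lesssim e^{-ct 2^j}\|\dot\Delta_j g\|_{L^\infty}$, and the elementary fact $|\delta_\alpha f|\le \min\{2\|f\|_\infty, |\alpha|\|f'\|_\infty\}$, one obtains
\begin{equation*}
\|\delta_\alpha \Lambda^{b-\varepsilon'} K(t,\cdot)\ast \dot\Delta_j g\|_{L^\infty}\lesssim \min\{1,|\alpha| 2^j\}\, 2^{j(b-\varepsilon')}e^{-ct 2^j}\|\dot\Delta_j g\|_{L^\infty}.
\end{equation*}
Taking the weighted $L^{1/b}_T L^\infty$ norm produces $\|t^\mu e^{-ct 2^j}\|_{L^{1/b}}\sim 2^{-j(\mu+b)}$, so the blockwise estimate becomes $\min\{1,|\alpha| 2^j\}\,2^{-j(\mu+\varepsilon')}\|\dot\Delta_j g\|_{L^\infty}$. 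Summing in $j$ and splitting at $2^j\sim|\alpha|^{-1}$ yields a geometric sum bounded by $|\alpha|^{\mu+\varepsilon'}\|g\|_{\dot B^0_{\infty,\infty}}$ as soon as $\mu+\varepsilon'\in(0,1)$, which completes the first inclusion.

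For the converse $\|g\|_{\dot B^0_{\infty,\infty}}\lesssim \|g\|_{\tilde{\mathcal{G}}}$ the key observation is that the semigroup $K(t',\cdot)\ast\,\cdot\,$ is a contraction on $L^\infty$ (its $L^1$ norm equals $1$), so for $h(t):=K(t,\cdot)\ast g$ the quantity $t\mapsto \|\delta_\alpha \Lambda^{b-\varepsilon'} h(t)\|_{L^\infty}$ is non-increasing in $t$. Integrating this monotone function over $[T_0/2,T_0]$ extracts a pointwise-in-time bound from the integrated $\mathcal{G}$ norm:
\begin{equation*}
\|\delta_\alpha\Lambda^{b-\varepsilon'}h(T_0)\|_{L^\infty}\lesssim T_0^{-(\mu+b)}|\alpha|^{\mu+\varepsilon'}\|g\|_{\tilde{\mathcal{G}}}.
\end{equation*}
Dividing by $|\alpha|^{\mu+\varepsilon'}$ and using the Besov lifting $\|\Lambda^{b-\varepsilon'} f\|_{\dot C^{\mu+\varepsilon'}}\sim \|f\|_{\dot C^{b+\mu}}$ (both exponents lying in $(0,1)$ in our range) gives $\|h(T_0)\|_{\dot C^{b+\mu}}\lesssim T_0^{-(b+\mu)}\|g\|_{\tilde{\mathcal{G}}}$ for every $b+\mu\in[2\varepsilon',\theta-\varepsilon']$. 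Since $\theta$ is close to $1$ and $\varepsilon'\ll 1-\theta$, this interval contains an open neighborhood of $1/2$, so applying \eqref{interpfrac} at $s=1/2$ with two nearby Hölder exponents yields $\|\Lambda^{1/2}h(T_0)\|_{L^\infty}\lesssim T_0^{-1/2}\|g\|_{\tilde{\mathcal{G}}}$. The semigroup characterization recalled above then delivers $\|g\|_{\dot B^0_{\infty,\infty}}\lesssim \|g\|_{\tilde{\mathcal{G}}}$.

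The main delicacy lies in coordinating the three ranges of exponents that appear: the admissible $(\mu,b)$ in the definition of $\mathcal{G}_T$, the H\"older exponent $b+\mu$ obtained after the monotonicity argument, and the target Besov index $s$ in the semigroup characterization. The interpolation \eqref{interpfrac} consumes a small open neighborhood around $s$, so the interval of admissible $b+\mu$ must be both open and comfortably interior to $(0,1)$; the generous ranges $\mu\in[0,2/3]$ and $b\le\theta-\mu-\varepsilon'$ with $\theta$ very close to $1$ are tailored exactly for this, and they leave plenty of room for the Riesz-type estimates (Lemma \ref{lemRiesz}) used later in the nonlinear analysis.
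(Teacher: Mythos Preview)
Your argument is correct and parallels the paper's in overall strategy---both directions ultimately rest on the semigroup characterization $\|g\|_{\dot B^0_{\infty,\infty}}\sim\sup_t t^s\|\Lambda^s K(t,\cdot)\ast g\|_{L^\infty}$---but the technical devices differ. For the inclusion $\|g\|_{\tilde{\mathcal{G}}}\lesssim\|g\|_{\dot B^0_{\infty,\infty}}$, the paper first proves the intermediate equivalence $\sup_t t^{\gamma_1+\gamma_2}\|\Lambda^{\gamma_1}(K(t)\ast h)\|_{\dot C^{\gamma_2}}\sim\|h\|_{\dot B^0_{\infty,\infty}}$, reads off the pointwise bound $\|\delta_\alpha\Lambda^{b-\varepsilon'}(K(t)\ast h)\|_{L^\infty}\lesssim t^{-(b-\varepsilon')}\min\{1,|\alpha|/t\}\|h\|_{\dot B^0_{\infty,\infty}}$, and integrates in $t$ by splitting at $t=|\alpha|$; your Littlewood--Paley route reaches the same estimate by summing blocks and splitting at $2^j\sim|\alpha|^{-1}$, which is the Fourier-side version of the same computation. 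The more interesting divergence is in the converse: the paper uses the averaging identity $K(t)\ast g=\frac{2}{t}\int_0^{t/2}K(t-\tau)\ast f(\tau)\,d\tau$ followed by H\"older in $\tau$ to pass from the $L^{1/b}_t$ norm to a pointwise-in-$t$ bound, whereas you exploit the monotonicity of $t\mapsto\|\delta_\alpha\Lambda^{b-\varepsilon'}h(t)\|_{L^\infty}$ (a direct consequence of $\|K(t')\|_{L^1}=1$) and integrate over $[T_0/2,T_0]$. Your monotonicity trick is slightly cleaner and avoids the intermediate $\dot C^{\gamma_2}$ equivalence, at the cost of invoking the H\"older lifting $\|\Lambda^{b-\varepsilon'}f\|_{\dot C^{\mu+\varepsilon'}}\sim\|f\|_{\dot C^{b+\mu}}$ and the interpolation \eqref{interpfrac} at the end; the paper absorbs those steps into its preliminary equivalence. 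Both routes are valid and comparable in length.
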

		\begin{proof}
			Recall the definition of $\|\cdot\|_{\tilde {\mathcal{G}}}$
			$$\|h\|_{\tilde {\mathcal{G}}} :=\sup_{0\leq \mu\leq\frac{2}{3}\atop 2\varepsilon'\leq b\leq\theta-\mu-\varepsilon' }\sup_{\alpha\in\mathbb{R}}\frac{\|t^\mu\delta_\alpha \Lambda^{b-\varepsilon'} (K(t,\cdot)\ast h)\|_{L^\frac{1}{b}_\infty L^\infty}}{|\alpha|^{\mu+\varepsilon'}}.
			$$
			We have the following characterization of $\dot B^0_{\infty,\infty}$ (see \cite{BaChDa,Trie}):
			\begin{align*}
				\|h\|_{\dot B^0_{\infty,\infty}}:=\sup_{t>0}t\|\Lambda(K(t,\cdot)\ast h)\|_{L^\infty}\sim \sup_{t>0}t^\gamma\|\Lambda^\gamma(K(t,\cdot)\ast h)\|_{L^\infty},\ \ \ \forall \gamma>0.
			\end{align*}
		\textbf{Step 1.}	We claim that 
			for any $\gamma_1,\gamma_2>0$,
			\begin{align}\label{equiv}
				\sup_{t>0}t^{\gamma_1+\gamma_2}\|\Lambda^{\gamma_1}(K(t,\cdot)\ast h)\|_{\dot C^{\gamma_2}}\sim 	\|h\|_{\dot B^0_{\infty,\infty}}. 
			\end{align}
			It is easy to check that 
			\begin{align*}
				\sup_{t>0}t^{\gamma_1+\gamma_2}\|\Lambda^{\gamma_1}(K(t,\cdot)\ast h)\|_{\dot C^{\gamma_2}}\lesssim 	\|h\|_{\dot B^0_{\infty,\infty}}. 
			\end{align*}
			for any $\gamma_1,\gamma_2>0$. Moreover, for any $g$
			\begin{align*}
				||\Lambda^{\gamma_3}K(1,\cdot)\ast g||\lesssim ||g||_{\dot C^{\gamma_2}}~~\forall \gamma_3>\gamma_2.
			\end{align*}
			Then, for any $\gamma>\gamma_1+\gamma_2$, 
			\begin{align*}
				\|\Lambda^\gamma(K(1,\cdot)\ast h)\|_{L^\infty}\lesssim \|\Lambda^{\gamma_1}(K(\frac{1}{2},\cdot)\ast h)\|_{\dot C^{\gamma_2}}\lesssim	\sup_{t>0}t^{\gamma_1+\gamma_2}\|\Lambda^{\gamma_1}(K(t,\cdot)\ast h)\|_{\dot C^{\gamma_2}},
			\end{align*}
			where we used the fact that $K(t,x)=t^{-1}K(1,x/t)$. 
			Hence we obtain
			\begin{align*}
				\|h\|_{\dot B^0_{\infty,\infty}}\sim	\sup_{t>0}t^\gamma\|\Lambda^\gamma(K(t,\cdot)\ast h)\|_{L^\infty}\lesssim	\sup_{t>0}t^{\gamma_1+\gamma_2}\|\Lambda^{\gamma_1}(K(t,\cdot)\ast h)\|_{\dot C^{\gamma_2}},
			\end{align*}
			which implies \eqref{equiv}.\\
			\textbf{Step 2.}
			For simplicity, we denote 
			$$
			f(t,x)=K(t,\cdot)\ast f_0(x).
			$$
			Note that for any $t_1, t_2>0$,
			\begin{equation*}
				f(t_1+t_2,x)=K(t_1,\cdot)\ast f(t_2)(x).
			\end{equation*}
			We have
			\begin{align*}
				K(t,\cdot )\ast f_0=\frac{2}{t}\int_0^\frac{t}{2}K(t-\tau,\cdot)\ast f(\tau,\cdot)d\tau.
			\end{align*}
			Hence for any $\alpha\in \mathbb{R}\backslash\{0\}$,
			\begin{align}
				\|\delta_\alpha \Lambda^\frac{1}{4} f(t,\cdot)\|_{L^\infty}&\lesssim \frac{1}{t}\int_0^\frac{t}{2}\|K(t-\tau,\cdot)\ast \Lambda^\frac{1}{4} \delta_\alpha f(\tau,\cdot)\|_{L^\infty}d\tau\nonumber\\
				&\lesssim  \frac{1}{t}\int_0^\frac{t}{2}\|\Lambda^\frac{1}{4} \delta_\alpha f(\tau,\cdot)\|_{L^\infty}d\tau,\label{2}
			\end{align}
			where we use the fact that $\|K(t,\cdot)\|_{L^1}=1$ for any $t>0$.
			By H\"{o}lder's inequality we obtain 
			\begin{align*}
				\int_0^\frac{t}{2}\| \Lambda^\frac{1}{4} \delta_\alpha f(\tau,\cdot)\|_{L^\infty}d\tau\lesssim t^\frac{1}{2}\| \Lambda^\frac{1}{4} \delta_\alpha f(\tau,\cdot)\|_{L^2_tL^\infty}\lesssim t^\frac{1}{2} |\alpha|^\frac{1}{4}\|f\|_{\tilde {\mathcal{G}}}.
			\end{align*}
			Combining this with \eqref{2} and \eqref{equiv},   we obtain 
			\begin{align*}
				\|f_0\|_{\dot B^0_{\infty,\infty}}\lesssim 	\|f_0\|_{\tilde {\mathcal{G}}}.
			\end{align*}
			Now we prove that 
			\begin{align}\label{eq2}
				\|f_0\|_{\tilde {\mathcal{G}}}\lesssim 	\|f_0\|_{\dot B^0_{\infty,\infty}}.
			\end{align}
			Let $\mu, b$ be such that $0\leq \mu\leq\frac{2}{3},  2\varepsilon'\leq b\leq\theta-\mu-\varepsilon' $. We have 
			\begin{align*}
				\|\delta_\alpha \Lambda^{b-\varepsilon'} (K(t,\cdot)\ast h)\|_{L^\infty}\overset{\eqref{equiv}}\lesssim \frac{\|h\|_{\dot B^0_{\infty,\infty}}}{t^{b-\varepsilon'}}\min\left\{1,\frac{|\alpha|}{t}\right\}.
			\end{align*}
			Hence 
			\begin{align*}
				&\|t^\mu\delta_\alpha \Lambda^{b-\varepsilon'} (K(t,\cdot)\ast h)\|_{L^\frac{1}{b}_\infty L^\infty}\\
				&\lesssim  \|h\|_{\dot B^0_{\infty,\infty}}\left(\int_0^{|\alpha|}t^\frac{\mu-b+\varepsilon'}{b}dt\right)^b+\|h\|_{\dot B^0_{\infty,\infty}}|\alpha|\left(\int_{|\alpha|}^\infty t^\frac{\mu-b+\varepsilon'-1}{b}dt\right)^b\\
				&\lesssim |\alpha|^{\mu+\varepsilon'}\|h\|_{\dot B^0_{\infty,\infty}},
			\end{align*}
			which implies \eqref{eq2}. This completes the proof.
		\end{proof}
		\begin{remark}\label{Rem1}
			From Lemma \ref{moveabove}, one can check that for any function $h:[0,T]\times\mathbb{R}\rightarrow\mathbb{R}$ and any $\mu,b,b_1$ such that  $0\leq\mu\leq\frac{2}{3},~2\varepsilon'\leq b\leq\theta-\mu-\varepsilon',~\mu+b+\varepsilon'\leq b_1\leq 1+b-\varepsilon
			'$, there holds
			\begin{align*}
				\sup_{\alpha\in\mathbb{R}}
				\frac{\|t^\mu\delta_\alpha \Lambda^{b_1} h\|_{L_T^\frac{1}{b}L^\infty}}{|\alpha|^{\mu+b-b_1+1}}	\lesssim\|\partial_x h\|_{\mathcal{G}_T}.
			\end{align*}
			%
		\end{remark}
		\begin{lemma}\label{estMf22}
			Consider equation \eqref{paraboliceq} with $G(t,x)=\partial_x N(t,x)$,
			then the solution $f$ satisfies for any $T>0$
			\begin{align*}
				\|f\|_{\mathcal{G}_T}\leq \|f_0\|_{\tilde{\mathcal{G}}_T}+ C \mathcal{M}(T),
			\end{align*}
			where we denote 
			\begin{align}\label{defM}
				\mathcal{M}(T):=	\sup_{0\leq\mu\leq\frac{2}{3}\atop\theta-\varepsilon'\leq \mu+a\leq1-\varepsilon'}\sup_\beta \frac{\|t^\mu\delta_\beta N(t,x)\|_{L^\frac{1}{a}_TL^\infty}}{|\beta|^{\mu+a}}.
			\end{align}
		\end{lemma}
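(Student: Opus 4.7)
The plan is a Duhamel decomposition $f = f_L + f_N$ with linear part $f_L(t,\cdot) = K(t,\cdot)\ast f_0$ and nonlinear part $f_N(t,\cdot) = \int_0^t K(t-\tau,\cdot)\ast\partial_xN(\tau,\cdot)\,d\tau$. By the very definition of $\tilde{\mathcal{G}}_T$, one has $\|f_L\|_{\mathcal{G}_T} = \|K\ast f_0\|_{\mathcal{G}_T} = \|f_0\|_{\tilde{\mathcal{G}}_T}$, so the task reduces to showing $\|f_N\|_{\mathcal{G}_T} \lesssim \mathcal{M}(T)$.

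For $f_N$, I fix admissible $\mu\in[0,2/3]$, $b\in[2\varepsilon',\theta-\mu-\varepsilon']$, and $\alpha\in\mathbb{R}$. Integration by parts together with $\partial_x=\mathcal{H}\Lambda$ gives
$$\delta_\alpha\Lambda^{b-\varepsilon'}f_N(t,x) = \int_0^t\int \delta_\alpha\mathcal{H}\Lambda^{1+b-\varepsilon'}K(t-\tau,x-y)\,N(\tau,y)\,dy\,d\tau,$$
and since $\widehat{\mathcal{H}\Lambda^{1+b-\varepsilon'}K}(t-\tau,0)=0$, subtracting $N(\tau,x)$ inside the integral converts this to
$$\delta_\alpha\Lambda^{b-\varepsilon'}f_N(t,x) = -\int_0^t\int \delta_\alpha\mathcal{H}\Lambda^{1+b-\varepsilon'}K(t-\tau,z)\,\delta_z N(\tau,x)\,dz\,d\tau.$$
This representation simultaneously engages the kernel cancellation $\|\delta_\alpha\mathcal{H}\Lambda^{1+b-\varepsilon'}K(t-\tau,\cdot)\|_{L^1}\lesssim\min\{1,|\alpha|/(t-\tau)\}(t-\tau)^{-(1+b-\varepsilon')}$ from \eqref{fractionald} and the spatial-increment quantities $\delta_z N(\tau,\cdot)$, which are precisely what $\mathcal{M}(T)$ controls.

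Next I apply Minkowski's inequality in $z$ to pull the $z$-integration outside the $L^{1/b}_TL^\infty_x$-norm, then pair the natural $(t-\tau)$-scale of the Poisson-type kernel (with pointwise decay $|\mathcal{H}\Lambda^{1+b-\varepsilon'}K(t-\tau,z)|\lesssim(t-\tau+|z|)^{-(2+b-\varepsilon')}$) against the $|z|^{\mu'+a}$ scaling from $\mathcal{M}(T)$, producing an effective time-convolution kernel of the form $\min\{1,|\alpha|/(t-\tau)\}(t-\tau)^{\mu'+a-(1+b-\varepsilon')}$. Choosing $\mu'=\mu$ and $a=1-\mu-\varepsilon'$, so that $\mu'+a=1-\varepsilon'$ saturates the upper end of the $\mathcal{M}(T)$-admissible range, collapses the exponent to $-b$, and Lemma~\ref{lemRiesz} with $r=b$, $\sigma=\mu+\varepsilon'$, $B=|\alpha|$ delivers the target bound $|\alpha|^{\mu+\varepsilon'}\mathcal{M}(T)$ on $\|t^\mu\delta_\alpha\Lambda^{b-\varepsilon'}f_N\|_{L^{1/b}_TL^\infty}$. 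The Riesz admissibility $r+\sigma<1$ reduces to $b+\mu+\varepsilon'<1$, which holds automatically from $b+\mu\leq\theta-\varepsilon'$, and the relation $q=p/(1+(1-r-\sigma)p)$ returns exactly $q=1/a$, matching the inner $L^{1/a}_T$-norm in $\mathcal{M}(T)$.

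The main obstacle is coordinating the $z$- and $\tau$-integrations with the time-weighted $L^{1/a}_T$ control in $\mathcal{M}(T)$: since $\|\delta_z N(\tau,\cdot)\|_{L^\infty}$ is only a time-integrated quantity rather than a pointwise function of $\tau$, the Minkowski step must be carried out carefully, and the $2\pi$-periodicity of $N$ is used to confine the effective range of $|z|$ to $[-\pi,\pi]$ so that the bound $\|\tau^{\mu'}\|\delta_z N\|_{L^\infty}\|_{L^{1/a}_T}\leq|z|^{\mu'+a}\mathcal{M}(T)$ applies uniformly across the $z$-integral. A secondary check is that the parameter choice $(\mu',a)=(\mu,1-\mu-\varepsilon')$ always lies in the admissible range $\mu'\in[0,2/3]$, $\mu'+a\in[\theta-\varepsilon',1-\varepsilon']$ for every $(\mu,b)$ permitted in $\mathcal{G}_T$; the extremely small $\varepsilon'=10^{-10}(1-\theta)$ together with $\theta$ essentially equal to $1$ leaves comfortable slack, so the supremum over admissible $(\mu,b,\alpha)$ in the $\mathcal{G}_T$-norm of $f_N$ is indeed dominated by $\mathcal{M}(T)$.
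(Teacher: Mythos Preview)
Your Duhamel split and the identity
\[
\delta_\alpha\Lambda^{b-\varepsilon'}f_N(t,x)=-\int_0^t\!\!\int \delta_\alpha\mathcal{H}\Lambda^{1+b-\varepsilon'}K(t-\tau,z)\,\delta_z N(\tau,x)\,dz\,d\tau
\]
are fine, and this is a genuinely different route from the paper: you put the \emph{entire} derivative $\partial_x=\mathcal{H}\Lambda$ on the kernel and use its mean-zero property, so the increment variable and the kernel spatial variable coincide. The paper instead writes $\partial_x=\mathcal{H}\Lambda^{1-\theta}\cdot\Lambda^\theta$, keeping the kernel order $1-\theta+b-\varepsilon'<1$ so that the $L^1$ bound \eqref{fractionald} applies verbatim, and representing $\Lambda^\theta N$ as an integral in a \emph{separate} variable $\beta$.

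The gap is in the passage ``Minkowski in $z$ \dots\ producing an effective time-convolution kernel $\min\{1,|\alpha|/(t-\tau)\}(t-\tau)^{-b}$''. These two steps are incompatible. After Minkowski in $z$ the object inside $\|\cdot\|_{L^{1/b}_T}$ is, for \emph{fixed} $z$, a time-convolution with kernel $|\delta_\alpha\mathcal{H}\Lambda^{1+b-\varepsilon'}K(t-\tau,z)|$, which behaves like $\min\{1,|\alpha|/(t-\tau+|z|)\}(t-\tau+|z|)^{-(2+b-\varepsilon')}$ and is \emph{not} of the form $(t-\tau)^{-r}\min\{1,B/(t-\tau)\}$ required by Lemma~\ref{lemRiesz}. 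If instead you first integrate in $z$ against the weight $|z|^{\mu'+a}$ to obtain the claimed time-kernel, you are implicitly bounding $\|\delta_z N(\tau,\cdot)\|_{L^\infty}\le |z|^{\mu'+a}\sup_{z}|z|^{-(\mu'+a)}\|\delta_z N(\tau,\cdot)\|_{L^\infty}$ \emph{pointwise in $\tau$}; the subsequent Riesz step then needs $\big\|\tau^\mu\sup_z|z|^{-(\mu'+a)}\|\delta_z N(\tau,\cdot)\|_{L^\infty}\big\|_{L^{1/a}_T}$, whereas $\mathcal{M}(T)$ only controls $\sup_z\big\|\tau^\mu|z|^{-(\mu'+a)}\|\delta_z N(\tau,\cdot)\|_{L^\infty}\big\|_{L^{1/a}_T}$ --- the supremum and the time-norm are in the wrong order.

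This is not a cosmetic issue: with your single choice $\mu'+a=1-\varepsilon'$, any crude product bound of the type $|\delta_\alpha K(s,z)|\lesssim C(z)\min\{1,|\alpha|/s\}s^{-b}$ forces $C(z)\sim|z|^{-(2-\varepsilon')}$, and then $\int C(z)|z|^{1-\varepsilon'}\,dz=\int|z|^{-1}\,dz$ diverges logarithmically. This is exactly the borderline the paper hits in its $\beta$-integral, and it resolves it by taking \emph{two} perturbed exponents $\sigma_\pm=\mu+\varepsilon'\pm\varepsilon'/2$ (hence two values of $a$) and using $\min_{\pm}\{\cdot\}$ so that the $\beta$-integrand gains a factor $\min\{|\beta|^{\varepsilon'/2},|\beta|^{-\varepsilon'/2}\}$. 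Your argument can be repaired either by importing that $\pm$-trick, or by replacing the appeal to Lemma~\ref{lemRiesz} with a direct Young inequality in time using the full $L^{1/(1+b-a)}_t$-norm of $s\mapsto\delta_\alpha\mathcal{H}\Lambda^{1+b-\varepsilon'}K(s,z)$ (which does give a convergent $z$-integral), but as written the step is unjustified.
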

		\begin{proof}
			By definition, we have $\|f_L\|_{\mathcal{G}_T}\leq \|f_0\|_{\tilde{\mathcal{G}}_T}$. It remains to prove that 
			\begin{align}\label{nonp}
				\|f_N\|_{\mathcal{G}_T}\leq C \mathcal{M}(T).
			\end{align}
			We have 
			\begin{align*}
				f_N(t,x)&=\int_0^t\int K(t-\tau,x-y) \partial_xN(\tau,y)dyd\tau=\int_0^t\int\mathcal{H}\Lambda^{1-\theta}K(t-\tau,x-y)\Lambda^\theta N(\tau,y)dyd\tau.
			\end{align*}
			Fix $b,\mu$ such that $0\leq\mu\leq\frac{2}{3}$ and $ 2\varepsilon'\leq b\leq\theta-\mu-\varepsilon'$.
			One has
			$$
			\left|\delta_\alpha \Lambda^{b-\varepsilon'}f_N(t,x)\right|\lesssim	\int_0^t\iint|\delta_\alpha\mathcal{H}\Lambda^{1-\theta+b-\varepsilon'}K(t-\tau,x-y)| |\delta_\beta  N(\tau,y)|\frac{d\beta dyd\tau}{|\beta|^{1+\theta}} .$$
			Recalling \eqref{fractionald}, one has
			$$
			\int|\delta_\alpha\mathcal{H}\Lambda^{1-\theta+b-\varepsilon'}K(x-y,t-\tau)|dy\lesssim \frac{1}{(t-\tau)^{1-\theta+b-\varepsilon'}}\min\left\{1,\frac{|\alpha|}{t-\tau}\right\}.$$
			Hence we obtain $$
			\|t^\mu\delta_\alpha \Lambda^{b-\varepsilon'}f_N\|_{L^\infty}\lesssim \iint_0^t\frac{t^\mu+(t-\tau)^\mu}{(t-\tau)^{1-\theta+b-\varepsilon'}}\min\left\{1,\frac{|\alpha|}{t-\tau}\right\}\|\delta_\beta  N(\tau,\cdot)\|_{L^\infty}\frac{d\tau d\beta}{|\beta|^{1+\theta}},$$
			where we also use the fact that $t^\mu\lesssim \tau^\mu+(t-\tau)^\mu$ for any $\tau\in(0,t)$.
			Applying Lemma \ref{lemRiesz} one obtains 
			\begin{align*}
				&\left\|\int_0^t\frac{\tau^\mu\|\delta_\beta  N(\tau,\cdot)\|_{L^\infty}}{(t-\tau)^{1-\theta+b-\varepsilon'}}\min\left\{1,\frac{|\alpha|}{t-\tau}\right\}d\tau\right\|_{L^\frac{1}{b}_T}\lesssim |\alpha|^{\sigma_\pm} \|\tau^\mu\delta_\beta N\|_{L^{q_{\pm}}_TL^\infty},\\
				&\left\|\int_0^t\frac{\|\delta_\beta  N(\tau,\cdot)\|_{L^\infty}}{(t-\tau)^{1-\theta+b-\varepsilon'-\mu}}\min\left\{1,\frac{|\alpha|}{t-\tau}\right\}d\tau\right\|_{L^\frac{1}{b}_T}\lesssim |\alpha|^{\sigma_\pm} \|\delta_\beta N\|_{L^{p_{\pm}}_TL^\infty},
			\end{align*}
			where $\sigma_\pm=\mu+\varepsilon'\pm\frac{\varepsilon'}{2}$, $p_\pm=(\mu+\varepsilon'+\theta-\sigma_\pm)^{-1}$ and $q_\pm=(\varepsilon'+\theta-\sigma_\pm)^{-1}$.
			Then we have 
			\begin{align*}
				\|t^\mu\delta_\alpha \Lambda^{b-\varepsilon'}f_N\|_{L^\frac{1}{b}_TL^\infty}\lesssim& \int\min_{+,-}\left\{ |\alpha|^{\sigma_\pm}\|\delta_\beta N\|_{L^{p_{\pm}}_TL^\infty}\right\}\frac{d\beta}{|\beta|^{1+\theta}}+\int\min_{+,-}\left\{ |\alpha|^{\sigma_\pm}\|\tau^\mu\delta_\beta N\|_{L^{q_{\pm}}_TL^\infty}\right\}\frac{d\beta}{|\beta|^{1+\theta}}\\
				\lesssim& |\alpha|^{\mu+\varepsilon'}\sum_{+,-}\left(\sup_\beta \frac{\|\delta_\beta N\|_{L^{p_{\mp}}_TL^\infty}}{|\beta|^{\theta\pm\frac{\varepsilon'}{2}}}+\sup_\beta \frac{\|\tau^\mu\delta_\beta N\|_{L^{q_{\mp}}_TL^\infty}}{|\beta|^{\theta\pm\frac{\varepsilon'}{2}}}\right)\\
				\lesssim& |\alpha|^{\mu+\varepsilon'}\mathcal{M}(T),
			\end{align*}
			which implies \eqref{nonp}. This completes the proof.
		\end{proof}
		\begin{lemma}\label{lempropsmoothe}
			For any $\mu\in[0,\frac{2}{3}]$ and any function $N:\mathbb{R}^+\times\mathbb{R}\rightarrow\mathbb{R}$, there holds
			\begin{align*}
				\sup_{t\in[0,T]}\int_0^t \frac{\tau^\mu}{(t-\tau)^\mu}\int\|\delta_\beta N (\tau,\cdot)\|_{L^\infty}\frac{d\beta}{|\beta|^{1+\theta}}d\tau\lesssim T^{1-\theta} \mathcal{M}(T),
			\end{align*}
			where $\mathcal{M}(T)$ is defined in \eqref{defM}.
		\end{lemma}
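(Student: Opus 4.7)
The plan is to interchange the order of integration, apply Hölder's inequality in $\tau$ to pull out the weighted $L^{1/a}_TL^\infty$ norm controlled by $\mathcal{M}(T)$, and then split the $\beta$-integral at the scale $r=T$ to handle the small-$\beta$ and large-$\beta$ divergences with two different admissible choices of the parameter $a$.

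First I would use Fubini to rewrite the left-hand side as $\int |\beta|^{-1-\theta}J_\beta(t)\,d\beta$, where
$$
J_\beta(t)=\int_0^t (t-\tau)^{-\mu}\bigl\|\tau^\mu \delta_\beta N(\tau,\cdot)\bigr\|_{L^\infty}\,d\tau.
$$
For any exponent $a\in(0,1)$ with $\theta-\varepsilon'\leq \mu+a\leq 1-\varepsilon'$, Hölder's inequality in $\tau$ with conjugate pair $(\tfrac{1}{a},\tfrac{1}{1-a})$ yields
$$
J_\beta(t)\lesssim \Bigl(\int_0^t (t-\tau)^{-\mu/(1-a)}d\tau\Bigr)^{1-a}\bigl\|\tau^\mu \delta_\beta N\bigr\|_{L^{1/a}_TL^\infty}.
$$
Since $\mu+a\leq 1-\varepsilon'$ the time integral is finite and equals $C\,t^{1-\mu-a}$, and the definition of $\mathcal{M}(T)$ gives $\|\tau^\mu\delta_\beta N\|_{L^{1/a}_TL^\infty}\lesssim |\beta|^{\mu+a}\mathcal{M}(T)$. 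Therefore
$$
J_\beta(t)\lesssim T^{1-\mu-a}|\beta|^{\mu+a}\mathcal{M}(T).
$$

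Substituting this back leaves the bound $T^{1-\mu-a}\mathcal{M}(T)\int|\beta|^{\mu+a-1-\theta}d\beta$. The key observation is that the $\beta$-integral diverges at both $0$ and $\infty$ for any single admissible $a$, so I would split $\mathbb{R}$ at $|\beta|=T$ and use two different values of $a$. On $\{|\beta|\leq T\}$, choose $a=1-\mu-\varepsilon'$ (saturating the upper bound $\mu+a=1-\varepsilon'$), making the integrand $|\beta|^{-\theta-\varepsilon'}$, integrable near zero since $\theta+\varepsilon'<1$; the resulting contribution is $T^{\varepsilon'}\cdot T^{1-\theta-\varepsilon'}\mathcal{M}(T)=T^{1-\theta}\mathcal{M}(T)$. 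On $\{|\beta|>T\}$, choose $a=\theta-\mu-\varepsilon'$ (saturating the lower bound $\mu+a=\theta-\varepsilon'$), making the integrand $|\beta|^{-1-\varepsilon'}$, integrable at infinity; the resulting contribution is $T^{1-\theta+\varepsilon'}\cdot T^{-\varepsilon'}\mathcal{M}(T)=T^{1-\theta}\mathcal{M}(T)$.

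The only verification needed is that both choices of $a$ remain in the admissible range. Since $\mu\leq \tfrac{2}{3}$ and $\theta$ is chosen extremely close to $1$, we have $a=1-\mu-\varepsilon'\geq \tfrac{1}{3}-\varepsilon'>0$ in the first case, and $a=\theta-\mu-\varepsilon'\geq \theta-\tfrac{2}{3}-\varepsilon'>0$ in the second case, so both pairs $(\mu,a)$ lie in the range defining $\mathcal{M}(T)$. The argument is not so much overcoming an obstacle as it is exploiting the very construction of $\mathcal{M}(T)$: the two-sided window $\theta-\varepsilon'\leq\mu+a\leq 1-\varepsilon'$ was designed precisely so that one endpoint makes the small-$\beta$ singularity integrable while the other endpoint makes the large-$\beta$ tail integrable, and the balance at $r=T$ gives the stated $T^{1-\theta}$ factor.
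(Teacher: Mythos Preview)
Your proof is correct and follows essentially the same approach as the paper: apply H\"older in $\tau$ against $\|\tau^\mu\delta_\beta N\|_{L^{1/a}_TL^\infty}$, split the $\beta$-integral at $|\beta|=T$, and use two different admissible values of $\mu+a$ on the two pieces. The only cosmetic difference is that on $\{|\beta|\le T\}$ the paper takes $\mu+a=\theta+\varepsilon'$ rather than your choice $\mu+a=1-\varepsilon'$; both lie in the window $[\theta-\varepsilon',1-\varepsilon']$ and both give the same $T^{1-\theta}$ factor.
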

		\begin{proof}
			By H\"{o}lder's inequality we have for any $\mu\in[0,\frac{2}{3}]$,
			\begin{align*}
				\int_0^t \frac{\tau^\mu}{(t-\tau)^\mu}\int\|\delta_\beta N (\tau,\cdot)\|_{L^\infty}&\frac{d\beta}{|\beta|^{1+\theta}}d\tau\lesssim T^{1-\theta-\varepsilon'}\sup_\beta\frac{\|\tau^\mu\delta_\beta N\|_{L_T^\frac{1}{\theta-\mu+\varepsilon'}L^\infty}}{|\beta|^{\theta+\varepsilon'}}\int_{|\beta|\leq T}\frac{d\beta}{|\beta|^{1-\varepsilon'}}\\
				&\quad+T^{1-\theta+\varepsilon'}\sup_\beta\frac{\|\tau^\mu\delta_\beta N\|_{L_T^\frac{1}{\theta-\mu-\varepsilon'}L^\infty}}{|\beta|^{\theta-\varepsilon'}}\int_{|\beta|\geq T}\frac{d\beta}{|\beta|^{1+\varepsilon'}} \\
				&\quad\lesssim T^{1-\theta}\mathcal{M}(T).
			\end{align*}
			The proof is complete.
		\end{proof}
	\vspace{0.3cm}\\
		To study \eqref{eqglo}, we consider the system
		\begin{equation}\label{eqwithL}
			\partial_{t} w+\mathcal{L} w= F\in \mathbb{R}^2,
		\end{equation}
		where the operator $\mathcal{L}$ is defined in Section \ref{Formulation near the steady state}.
		Recalling \eqref{kkkkkernel}, let $v=\mathcal{O}_x w$, then each component of $v$ satisfies the equation \eqref{paraboliceq} with nonlinear terms $\mathcal{O}_x F(t,x)$. Hence 
$$
			v(t,x)=\int K(t,x-y)v(0,y)dy+\int_0^t\int K(t-\tau,x-y)\mathcal{O}_y F(\tau,y)dyd\tau.$$
		We obtain
		\begin{align}\label{formulaY}
			w(t,x)=\int \tilde K(t,x-y) w(0,y)dy+\int_0^t\int \tilde K(t-\tau,x-y) F(\tau,y)dyd\tau,
		\end{align}
		where we denote $\tilde K(t,x)=K(t,x)\mathcal{O}_x^T$.
		
		For system \eqref{eqwithL}, we have the following result, which is an analogy to Lemma \ref{estMf22}.
		\begin{lemma}\label{propdeltaNY}
			Consider the system \eqref{eqwithL} with $w(0,x)=w_0(x)$ and $F(t,x)=\partial_x N(t,x)$,
			then for any $T>0$, we have 
$$
				\|w\|_{\mathcal{G}_T}\leq \|w_0\|_{\tilde {\mathcal{G}}_T}+C	\sup_{0\leq\mu\leq\frac{2}{3}\atop\theta-\varepsilon'\leq \mu+a\leq1-\varepsilon'}\sup_\beta \frac{\|t^\mu\delta_\beta N(t,x)\|_{L^\frac{1}{a}_TL^\infty}}{|\beta|^{\mu+a}}.$$
		\end{lemma}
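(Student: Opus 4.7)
The proof is the matrix-valued analogue of Lemma \ref{estMf22}. I would decompose $w=w_L+w_N$ using the representation formula \eqref{formulaY} with the matrix kernel $\tilde K(t,x)=K(t,x)\mathcal{O}_x^T$, and bound each piece separately. The guiding principle is that $\mathcal{O}_x^T$ is a smooth periodic rotation matrix satisfying $|\mathcal{O}_x^T|\equiv 1$ with all derivatives uniformly bounded, so $\tilde K$ inherits all of the convolution estimates enjoyed by $K$ up to harmless lower-order terms.

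For the linear part $w_L(t,x)=\int \tilde K(t,x-y)w_0(y)\,dy$, I would exploit the diagonalization behind \eqref{kkkkkernel}: set $v=\mathcal{O}_x w$, so $v_L(t,x)=\int K(t,x-y)(\mathcal{O}_\cdot w_0)(y)\,dy$ is the scalar heat extension of $\mathcal{O}_\cdot w_0$. Since $\mathcal{O}_\cdot$ is smooth periodic with $\|\mathcal{O}_\cdot\|_{C^m}\lesssim 1$ for every $m$, pointwise multiplication by $\mathcal{O}_\cdot$ is bounded on $\tilde{\mathcal{G}}_T=\dot B^0_{\infty,\infty}$ (via Lemma \ref{lemequiv}) and on $\mathcal{G}_T$. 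Hence
$$
\|w_L\|_{\mathcal{G}_T}=\|\mathcal{O}_x^T v_L\|_{\mathcal{G}_T}\lesssim \|v_L\|_{\mathcal{G}_T}\lesssim \|\mathcal{O}_\cdot w_0\|_{\tilde{\mathcal{G}}_T}\lesssim \|w_0\|_{\tilde{\mathcal{G}}_T},
$$
which yields the desired control of the linear part (after absorbing the absolute constant into the norm).

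For the nonlinear part $w_N(t,x)=\int_0^t\int \tilde K(t-\tau,x-y)\partial_y N(\tau,y)\,dy\,d\tau$, I would repeat the argument in Lemma \ref{estMf22} verbatim, with $K$ replaced by $\tilde K$. The only new ingredient needed is that $\tilde K$ obeys the analogues of \eqref{a1}--\eqref{fractionald}, namely
\begin{align*}
\|\delta_\alpha \partial_x^k \tilde K(t,\cdot)\|_{L^1}&\lesssim \min\{1,|\alpha|t^{-1}\}\, t^{-k},\\
\|\delta_\alpha \partial_x^k \mathcal{H}\Lambda^\gamma \tilde K(t,\cdot)\|_{L^1}&\lesssim \min\{1,|\alpha|t^{-1}\}\, t^{-(k+\gamma)},
\end{align*}
which follow from the corresponding estimates on $K$ combined with the uniform bounds $\|\partial_x^m \mathcal{O}_x^T\|_{L^\infty}\lesssim 1$, via the Leibniz rule for integer derivatives and the commutator/fractional-Leibniz estimate for the $\Lambda^\gamma$ factor. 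Once these kernel estimates are in hand, one rewrites $\partial_y N = \mathcal{H}\Lambda N$ with $\Lambda^\theta N = c\int \delta_\beta N/|\beta|^{1+\theta}\,d\beta$, moves the fractional derivatives onto $\tilde K$, and closes by Lemma \ref{lemRiesz} with the exponents $\sigma_\pm, p_\pm, q_\pm$ exactly as in the proof of Lemma \ref{estMf22}. This produces the bound $C\,\mathcal{M}(T)$ for $\|w_N\|_{\mathcal{G}_T}$.

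The only non-routine step is the verification of the kernel estimates for $\tilde K$; in particular, the fractional-derivative commutator $[\mathcal{H}\Lambda^\gamma,\mathcal{O}_\cdot^T]$ produces terms one order smoother (in $x$) than the leading contribution, which can be absorbed using the boundedness of $K$ itself in $L^1$. Beyond this technical verification, the argument is a transcription of the scalar proof.
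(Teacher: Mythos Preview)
Your proposal is correct and matches the paper's intended approach: the paper gives no proof at all for this lemma, merely stating that it is ``an analogy to Lemma \ref{estMf22},'' and your argument supplies precisely the expected details---reducing to the scalar case via the conjugation $v=\mathcal{O}_x w$ (equivalently, via kernel estimates for $\tilde K=K\mathcal{O}_\cdot^T$) and invoking the smoothness of $\mathcal{O}_\cdot^T$ to absorb the commutator terms. One small caveat: your route produces $\|w_L\|_{\mathcal{G}_T}\lesssim \|w_0\|_{\tilde{\mathcal{G}}_T}$ with an absolute constant rather than the sharp constant $1$ written in the statement; this is harmless for the paper's bootstrap arguments (the threshold $\xi_1$ can absorb any fixed constant), and indeed the paper's own statement appears to be slightly informal on this point.
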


		\section{Establish the main estimates}
		\subsection{Estimate the nonlinear terms}
		\begin{proposition}\label{propdeltaN} Let $N$ be as defined in \eqref{eqpeskin}, then for any $T>0$,
			there holds 
$$
				\sup_{0\leq\mu\leq\frac{2}{3}\atop\theta-\varepsilon'\leq \mu+a\leq1-\varepsilon'}\sup_\beta \frac{\|t^\mu\delta_\beta N(t,x)\|_{L^\frac{1}{a}_TL^\infty}}{|\beta|^{\mu+a}}\lesssim\left(1+\kappa(T)\right)^2\|X'\|_{\mathcal{G}_T}^2(1+\|X'\|_{\mathcal{G}_T})^2.$$
		\end{proposition}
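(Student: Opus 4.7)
The argument will be a case analysis based on which factor inside the integrand of \eqref{Ne} absorbs the outer increment $\delta_\beta$, combined with a dyadic split in the variable $\alpha$ around the threshold $|\alpha|\sim|\beta|$. I will treat a generic term $J(s)=\int H(\tilde\Delta_\alpha X(s))\,E^\alpha X_i(s)\,\delta_\alpha X_j'(s)\,\frac{d\alpha}{\alpha}$ since all pieces of $N$ have the same structure. First I fix $\mu,a$ with $0\le\mu\le 2/3$ and $\theta-\varepsilon'\le \mu+a\le 1-\varepsilon'$, and write $\delta_\beta J(s)$ via the three-term Leibniz expansion
\[
\delta_\beta\bigl[H(\tilde\Delta_\alpha X)\,E^\alpha X_i\,\delta_\alpha X_j'\bigr](s)
=(\delta_\beta H)\cdot E^\alpha X_i(s)\,\delta_\alpha X_j'(s)
+H\cdot(\delta_\beta E^\alpha X_i)\,\delta_\alpha X_j'(s)
+H\cdot E^\alpha X_i(s-\beta)\,(\delta_\beta \delta_\alpha X_j'),
\]
with appropriate shifted evaluation points so that the remaining factors are uniformly bounded in the well-stretched sense.

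Next I split $\int\frac{d\alpha}{\alpha}=\int_{|\alpha|\le|\beta|}+\int_{|\alpha|\ge|\beta|}$. On the inner region $|\alpha|\le|\beta|$ I discard the $\delta_\beta$ and bound the integrand pointwise: Lemma \ref{leH}\eqref{H1} gives $|H|\lesssim\kappa$, while Lemma \ref{holder} and Remark \ref{Rem1} applied to the $\mathcal{G}_T$-norm yield $\|E^\alpha X_i\|_{L^\infty}\lesssim|\alpha|^{1+b_1-\varepsilon'}t^{-\mu_1}\|X'\|_{\mathcal{G}_T}$ and $\|\delta_\alpha X_j'\|_{L^\infty}\lesssim|\alpha|^{\mu_2+\varepsilon'}t^{-\mu_2}\|X'\|_{\mathcal{G}_T}$ for suitable exponents to be chosen so that $\int_{|\alpha|\le|\beta|}\frac{|\alpha|^{(\cdots)}}{|\alpha|}\,d\alpha$ converges to a power of $|\beta|$ matching $|\beta|^{\mu+a}$. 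On the outer region $|\alpha|\ge|\beta|$ I expand $\delta_\beta$ factor by factor: for the $\delta_\beta H$ piece, Lemma \ref{leH}\eqref{H1} bounds this by $\kappa^2|\tilde\Delta_\alpha X(s)-\tilde\Delta_\alpha X(s-\beta)|$, which is nothing but $\tilde\Delta_\alpha(\delta_\beta X)(s)$, and is handled by Lemma \ref{lem2}(1); for the $\delta_\beta E^\alpha X_i$ and $\delta_\beta\delta_\alpha X_j'$ pieces, the estimate follows directly from Remark \ref{Rem1} and Lemma \ref{holder}. The absolute size of $H$ contributes the first $\kappa$-factor, and the differencing of $H$ contributes the second, accounting for $(1+\kappa)^2$.

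The time integration is closed by Hölder's inequality: for each of the three Leibniz terms I split the exponent $1/a$ as $1/a=1/p_1+1/p_2$, assigning the derivative-weighted factor of $X'$ that carries the $|\beta|^{\mu+\varepsilon'}$ or $|\beta|^{\gamma}$ gain to a norm available from $\|X'\|_{\mathcal{G}_T}$, and the other factor (which only needs size, possibly with the $t^\mu$ prefactor) to the complementary exponent. The range $\theta-\varepsilon'\le \mu+a\le 1-\varepsilon'$ is exactly what the parameter set in $\mathcal{G}_T$ allows; this is why the parameters $(\mu,b)$ in the definition of $\|\cdot\|_{\mathcal{G}_T}$ were chosen with this range. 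Summing over $\alpha$ (balancing the two sides of the dyadic split) yields exactly the power $|\beta|^{\mu+a}$.

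The main obstacle will be the bookkeeping of exponents. Each Leibniz term needs its own choice of $(b_1,\mu_1)$ versus $(b_2,\mu_2)$ in the two $\mathcal{G}_T$-norm invocations so that (i) the product of the Hölder-type gains in $|\alpha|$ matches $|\beta|^{\mu+a}$ up to the factor $d\alpha/|\alpha|$, (ii) the $L^{p_1}_T\cdot L^{p_2}_T$ pairing lands in $L^{1/a}_T$, (iii) the time prefactor $t^\mu$ is distributed as $\tau^{\mu_1}\cdot\tau^{\mu_2}$ with $\mu_1+\mu_2\ge\mu$, and (iv) all choices lie inside $0\le\mu_i\le 2/3$ and $2\varepsilon'\le b_i\le\theta-\mu_i-\varepsilon'$. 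The smallness of $\varepsilon'$ compared to $1-\theta$ gives enough slack for such a choice to exist; the $\delta_\beta H$ piece is the most delicate because it couples two copies of the same difference via Lemma \ref{lem2}(2), producing the fourth power $(1+\|X'\|_{\mathcal{G}_T})^2\|X'\|_{\mathcal{G}_T}^2$ after collecting all contributions.
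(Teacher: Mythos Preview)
Your Leibniz decomposition and inner-region treatment are fine, but the outer-region argument for the two ``principal'' pieces---where $\delta_\beta$ lands on $E^\alpha X_i$ or on $\delta_\alpha X_j'$---does not close. Take the latter: on $|\alpha|\ge|\beta|$ you are left with
\[
\int_{|\beta|\le|\alpha|\le\pi}\bigl|H(\tilde\Delta_\alpha X)\bigr|\,\bigl|E^\alpha X_i(s-\beta)\bigr|\,\bigl|\delta_\beta\delta_\alpha X_j'(s)\bigr|\,\frac{d\alpha}{|\tilde\alpha|}.
\]
The only control you have on the double difference is $\|t^\mu\delta_\beta\delta_\alpha X_j'\|_{L^{p_2}_TL^\infty}\lesssim\min\bigl(|\alpha|^{\mu+1/p_2},|\beta|^{\mu+1/p_2}\bigr)\|X'\|_{\mathcal G_T}$; on the outer region this is $|\beta|^{\mu+1/p_2}$. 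Pairing via H\"older with $\|E^\alpha X\|_{L^{p_1}_TL^\infty}\lesssim|\alpha|^{1/p_1}\|X'\|_{\mathcal G_T}$ gives an $\alpha$-integral $\int_{|\beta|}^\pi|\alpha|^{1/p_1-1}\,d\alpha\sim1$, hence a total gain of only $|\beta|^{\mu+1/p_2}$, short of the required $|\beta|^{\mu+a}=|\beta|^{\mu+1/p_1+1/p_2}$ by exactly $|\beta|^{1/p_1}$. No choice of exponents recovers this: the $\alpha$-power coming from $E^\alpha X$ is always nonnegative, so the outer integral never produces a compensating negative power of $|\beta|$. The $\delta_\beta E^\alpha X_i$ piece has the identical defect.

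The paper repairs this with two ingredients you omit. First, the cancellation \eqref{integral0}, namely $\sum\int H(\tilde\Delta_\alpha X)E^\alpha X_i\,\frac{d\alpha}{\alpha}=0$, lets one replace $\delta_\beta\delta_\alpha X_j'(s)$ by $-\delta_\beta X_j'(s-\alpha)$ inside the \emph{full} $\alpha$-integral (not just the outer piece), reducing a second difference to a first one. Second---and this is the decisive step---one writes $\delta_\beta X_j'(s-\alpha)=\partial_\alpha\bigl(\alpha\,\Delta_\alpha\delta_\beta X_j(s)\bigr)$ and likewise $\delta_\beta E^\alpha X=\alpha\,\partial_\alpha(\Delta_\alpha\delta_\beta X)+\text{lower order}$, and then invokes the fractional integration-by-parts estimate of Lemma~\ref{lemI12}/Remark~\ref{remmovedeT}. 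That lemma transfers a $\Lambda^\sigma$ from the $\partial_\alpha(\Delta_\alpha\delta_\beta X)$ factor onto the product $H(\tilde\Delta_\alpha X)\cdot(\text{other factor})$, which is exactly what redistributes enough regularity to recover the missing $|\beta|^{1/p_1}$. A bare dyadic split around $|\alpha|\sim|\beta|$ cannot substitute for this mechanism.
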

		\begin{proof}
			Fix $a$ and $\mu$ such that $0\leq\mu\leq\frac{2}{3}$, $\theta-\varepsilon'\leq \mu+a\leq1-\varepsilon'$. Recall that the nonlinear term can be written as
			\begin{align*}
				N(t,s)=\sum\int H(\tilde\Delta_\alpha X(s)) E^\alpha X_{i}(s)\delta_\alpha X_{j}'(s)\frac{d\alpha}{\alpha}.
			\end{align*}
			We have 
			\begin{align*}
				|\delta_\beta N(t,s)|\lesssim&\left|\sum\int H(\tilde\Delta_\alpha X(s)) \delta_\beta E^\alpha X_{i}(s)\delta_\alpha X_{j}'(s-\beta)\frac{d\alpha}{\alpha} \right|\\& +\left|\sum\int H(\tilde\Delta_\alpha X(s))  E^\alpha X_{i}(s)\delta_\beta X_{j}'(s-\alpha)\frac{d\alpha}{\alpha} \right|
				\\&+\left|\sum\int\delta_\beta H(\tilde\Delta_\alpha X)(s) E^\alpha X_{i}(s-\beta)\delta_\alpha X_{j}'(s-\beta)\frac{d\alpha}{\alpha}\right| \\
				:=&J_1+J_2+J_3,
			\end{align*}
			where we also used the fact that 
		$$\sum\int H(\tilde\Delta_\alpha X(s))  E^\alpha X_{i}(s)\delta_\beta\delta_\alpha X_{j}'(s)\frac{d\alpha}{\alpha}\overset{\eqref{integral0}}=-\sum\int H(\tilde\Delta_\alpha X(s))  E^\alpha X_{i}(s)\delta_\beta X_{j}'(s-\alpha)\frac{d\alpha}{\alpha}.$$
			Note that 
			$$
			E^\alpha X(s)= \alpha \partial_\alpha\left(\Delta_\alpha X(s)\right)+\delta_\alpha X(s)\left(\frac{1}{\alpha}-\frac{1}{\tilde\alpha}\right).
			$$
			We have
			\begin{align*}
				J_{1}&\lesssim\left|\int H(\tilde\Delta_\alpha X(s))\delta_\alpha X_{j}'(s-\beta) \partial_\alpha[\Delta_\alpha  \delta_\beta X_{i}(s)]{d\alpha}\right|\\
				&\quad\quad\quad+\left|\int H(\tilde\Delta_\alpha X(s))\delta_\alpha X_{j}'(s-\beta) \delta_\beta\delta_\alpha X(s)\left(\frac{1}{\alpha}-\frac{1}{\tilde\alpha}\right)\frac{d\alpha}{\alpha}\right|\\
				&:=J_{1,1}+J_{1,2}.
			\end{align*}
			We first estimate $J_{1,1}$. 
			By \eqref{H1}, one has
			\begin{align}
				&\sup_\alpha\|H(\tilde\Delta_\alpha X(t,\cdot))\|_{L^\infty}=\sup_{\alpha\in(-\pi,\pi)}\|H(\tilde\Delta_\alpha X(t,\cdot))\|_{L^\infty}\lesssim \kappa(t),\label{Hkappa}\\
				&\|H(\tilde\Delta_\alpha X)-H(\tilde\Delta_{\alpha-z} X)\|_{L^\infty}\lesssim \kappa(t)^2\|\tilde\Delta_\alpha X-\tilde\Delta_{\alpha-z} X\|_{L^\infty}.\nonumber
			\end{align}
			Let $f(t,\alpha)=H(\tilde\Delta_\alpha X(s))\delta_\alpha X'(t,s-\beta)$, $g(t,s)=t^\mu\delta_\beta X(t,s)$, there holds
			\begin{align*}
				&|\delta_z f(t,\cdot)(\alpha)|\lesssim\kappa(t)\|\delta_z X'\|_{L^\infty}+ \kappa(t)^2\|\tilde\Delta_\alpha X-\tilde\Delta_{\alpha-z} X\|_{L^\infty}\|\delta_\alpha X'\|_{L^\infty}.
			\end{align*}
			For any $p, q$ such that $4\varepsilon'\leq\frac{1}{p}\leq \theta-\varepsilon'$ and   $2\varepsilon'\leq\frac{1}{q}+\mu\leq\frac{3}{4}-\frac{\varepsilon'}{2},q\leq 30$, denote
			\begin{align*}
				&V_{q,\mu}(\beta)= 	\|t^\mu\delta_\beta \Lambda^{\frac{3+\varepsilon'}{4}}X\|^\frac{1}{2}_{L^q_TL^\infty}\|t^\mu\delta_\beta \Lambda^{\frac{3-\varepsilon'}{4}}X\|^\frac{1}{2}_{L^q_TL^\infty},\quad\quad\\
				&U_p=\sup_{\alpha,z}\frac{\left\|\|\tilde\Delta_\alpha X-\tilde\Delta_{\alpha-z} X\|_{L^\infty}\|\delta_\alpha X'\|_{L^\infty}\right\|_{L^p_T}}{|z|^\frac{1}{p}}+\sup_z\frac{\|\delta_z X'\|_{L_T^pL^\infty}}{|z|^\frac{1}{p}}.
			\end{align*}
			By Lemma \ref{lem2} and Remark \ref{Rem1}, it is easy to check that 
			\begin{align}\label{VandU}
				V_{q,\mu}(\beta)&\lesssim|\beta|^{\frac{1}{q}+\frac{1}{4}+\mu} \|X'\|_{\mathcal{G}_T},\quad\quad\quad U_p\lesssim \|X'\|_{\mathcal{G}_T}(1+\|X'\|_{\mathcal{G}_T}).
			\end{align}
			For any $q>1$, we have 
			$$
			\sup_{\alpha,s}\|\delta_z f(t,\cdot)(\alpha)\|_{L^q_T}\lesssim|z|^\frac{1}{q} \left(1+\kappa(T)\right)^2U_q.
			$$ 
			Applying Remark \ref{remmovedeT}  with above $f$,  $g$ and parameters $\sigma=\frac{1}{4}$, $p=\frac{1}{a}$, $p_{1\pm}=\frac{4}{1\pm\varepsilon'}$, $p_{2\pm}=\frac{4}{4a\pm\varepsilon'-1}$, we obtain
			%
			\begin{align}
				\|t^\mu J_{1,1}\|_{L^\frac{1}{a}_TL^\infty}\lesssim& \sup_{\alpha,s}\int\min_{+,-}\left\{ \|\delta_z f(t,\cdot)(\alpha)\|_{L^\frac{4}{1\pm\varepsilon'}_T}V_{\frac{4}{4a\mp\varepsilon'-1},\mu}(\beta)\right\}\frac{dz}{|z|^\frac{5}{4}}\nonumber\\		
				\lesssim& (1+\kappa(T))^2\int \min_{+,-}\left\{|z|^{\pm\frac{\varepsilon'}{4}}U_\frac{4}{1\pm\varepsilon'}V_{\frac{4}{4a\mp\varepsilon'-1},\mu}(\beta)\right\}\frac{dz}{|z|}
				\nonumber\\
				\overset{\eqref{VandU}}\lesssim& |\beta|^{\mu+a}\left(1+\kappa(T)\right)^2\|X'\|_{\mathcal{G}_T}^2(1+\|X'\|_{\mathcal{G}_T})^2.\label{J11}
			\end{align}
			For $J_{1,2}$, by \eqref{TR} one has for any $2\pi$ -periodic function $h$
			\begin{equation}\label{z2}
				\int \frac{h(\alpha)}{\tilde\alpha}\frac{d\alpha}{\alpha}=\int_{-\pi}^{\pi}h(\alpha)\frac{d\alpha}{|\tilde\alpha|^2}.
			\end{equation}
			Hence 
			\begin{align*}
				J_{1,2}\lesssim \int| H(\tilde\Delta_\alpha X(s))\delta_\alpha X_{j}'(s-\beta) \delta_\beta\delta_\alpha X(s)|\frac{d\alpha}{|\alpha|^2}\overset{\eqref{Hkappa}}\lesssim \kappa(t) \int|\delta_\alpha X_{j}'(s-\beta) \delta_\beta\delta_\alpha X(s)|\frac{d\alpha}{|\alpha|^2}.
			\end{align*}
			By \eqref{interpfrac2}, we have 
			\begin{align}\label{12haha}
				\sup_{\alpha,s} \frac{|\delta_\beta\delta_\alpha X(s)|}{|\alpha|^\frac{3}{4}}\lesssim\|\delta_\beta\Lambda^\frac{3}{4} X\|_{L^\infty}. 	
			\end{align}
			Hence
			$$
			J_{1,2}\lesssim \kappa(t)\int \|\delta_\alpha X'\|_{L^\infty}\frac{d\alpha}{|\alpha|^\frac{5}{4}} \|\delta_\beta\Lambda^\frac{3}{4} X\|_{L^\infty}.
			$$
			Applying H\"{o}lder's inequality and Minkowski inequality  we obtain
			\begin{align*}
				\|t^\mu J_{1,2}\|_{L^\frac{1}{a}_TL^\infty}
				&\lesssim \kappa(T)\int\min_{+,-}\left\{\|\delta_\alpha X'\|_{L_T^\frac{4}{1\pm\varepsilon'}L^\infty}\|t^\mu\delta_\beta\Lambda^\frac{3}{4} X\|_{L_T^\frac{4}{4a-1\mp\varepsilon'}L^\infty} \right\}\frac{d\alpha}{|\alpha|^\frac{5}{4}}\\
				&\lesssim \kappa(T)\sum_{+,-}|\beta|^{\pm\frac{\varepsilon'}{4}}\sup_\alpha \frac{\|\delta_\alpha X'\|_{L_T^\frac{4}{1\pm\varepsilon'}L^\infty}}{|\alpha|^\frac{1\pm\varepsilon'}{4}}\|t^\mu\delta_\beta\Lambda^\frac{3}{4} X\|_{L_T^\frac{4}{4a-1\mp\varepsilon'}L^\infty}.
			\end{align*}
			By Remark \ref{Rem1}, there holds
			\begin{align}\label{blabla}
				\|t^\mu\delta_\beta\Lambda^\frac{3}{4} X\|_{L_T^\frac{4}{4a-1\pm\varepsilon'}L^\infty}\lesssim |\beta|^{\mu+a\pm\frac{\varepsilon'}{4}}\|X'\|_{\mathcal{G}_T}.
			\end{align}
			Hence we get 
			\begin{align*}
				\|t^\mu J_{1,2}\|_{L^\frac{1}{a}_TL^\infty}\lesssim |\beta|^{\mu+a}\left(1+\kappa(T)\right)^2\|X'\|_{\mathcal{G}_T}^2.
			\end{align*}
			Combining this with \eqref{J11} we have
			\begin{align}\label{J1}
				\|t^\mu J_{1}\|_{L^\frac{1}{a}_TL^\infty}\lesssim|\beta|^{\mu+a}\left(1+\kappa(T)\right)^2\|X'\|_{\mathcal{G}_T}^2(1+\|X'\|_{\mathcal{G}_T})^2.
			\end{align}
			Then we estimate $J_2$. Note that $X'(s-\alpha)=\partial_\alpha(\delta_\alpha X(s))=\partial_\alpha(\alpha\Delta_\alpha X(s))$, hence
			we have 
			\begin{align*}
				J_{2}
				&\lesssim\left|\int H(\tilde\Delta_\alpha X)  E^\alpha X_{i}\partial_\alpha[\Delta_\alpha\delta_\beta X_{j}]{d\alpha}\right|+\left|\int H(\tilde\Delta_\alpha X)  E^\alpha X_{i}\Delta_\alpha\delta_\beta X_{j}\frac{d\alpha}{\alpha}\right|.
			\end{align*}
			We can follow the estimates of $J_{1,1}$ and $J_{1,2}$ to estimate the above two terms, we conclude that
			\begin{align*}
				\|t^\mu J_{2}\|_{L^\frac{1}{a}_TL^\infty}\lesssim|\beta|^{\mu+a}\left(1+\kappa(T)\right)^2\|X'\|_{\mathcal{G}_T}^2(1+\|X'\|_{\mathcal{G}_T})^2.
			\end{align*}
			For $J_3$, by \eqref{TR} we have 
			\begin{align*}
				J_3=\left|\int_{-\pi}^{\pi}\delta_\beta (H(\tilde\Delta_\alpha X))(s) E^\alpha X_{i}(s-\beta)\delta_\alpha X_{j}'(s-\beta)\frac{d\alpha}{\tilde\alpha}\right|.
			\end{align*}
			By \eqref{H1} we have
			for any $\alpha\in(-\pi,\pi)$
			\begin{align*}
				|\delta_\beta (H(\tilde\Delta_\alpha X))(s) |\lesssim \kappa(t)^2\|\delta_\beta\Delta_\alpha X\|_{L^\infty}\overset{\eqref{12haha}}\lesssim |\alpha|^{-\frac{1}{4}} \kappa(t)^2\|\delta_\beta\Lambda^\frac{3}{4} X\|_{L^\infty}.
			\end{align*}
			Hence 
			\begin{align*}
				J_3\lesssim \kappa(t)^2\|\delta_\beta\Lambda^\frac{3}{4} X\|_{L^\infty}\int\|E^\alpha X\|_{L^\infty}\|\delta_\alpha X'\|_{L^\infty}\frac{d\alpha}{|\alpha|^\frac{5}{4}},
			\end{align*}
			where we also used \eqref{12haha}.
			Applying H\"{o}lder's inequality, Minkowski's inequality and \eqref{blabla} we obtain
			\begin{align*}
				\|t^\mu J_{3}\|_{L^\frac{1}{a}_TL^\infty}&\lesssim\kappa(T)^2\int\min_{+,-}\left\{|\beta|^{a+\mu\pm\varepsilon'}\|E^\alpha X\delta_\alpha X'\|_{L_T^\frac{4}{1\mp\varepsilon'}L^\infty}\right\}\frac{d\alpha}{|\alpha|^\frac{5}{4}}.
			\end{align*}
			Note that 
			\begin{align*}
				&\int_{|\alpha|\leq |\beta|}\|E^\alpha X\delta_\alpha X'\|_{L_T^\frac{4}{1\mp\varepsilon'}L^\infty}\frac{d\alpha}{|\alpha|^\frac{5}{4}}\lesssim \left(\sup_{\alpha}\frac{\|\delta_\alpha X'\|_{L_T^\frac{8}{1+\varepsilon'}L^\infty}}{|\alpha|^\frac{1+\varepsilon'}{8}}\right)^2|\beta|^\frac{\varepsilon'}{4},\nonumber\\
				&	\int_{|\alpha|\geq |\beta|}\|E^\alpha X\delta_\alpha X'\|_{L_T^\frac{4}{1\mp\varepsilon'}L^\infty}\frac{d\alpha}{|\alpha|^\frac{5}{4}}\lesssim \left(\sup_{\alpha}\frac{\|\delta_\alpha X'\|_{L_T^\frac{8}{1-\varepsilon'}L^\infty}}{|\alpha|^\frac{1-\varepsilon'}{8}}\right)^2|\beta|^\frac{-\varepsilon'}{4}.
			\end{align*}
			Hence we obtain 
			\begin{align}\label{J3}
				\|t^\mu J_{3}\|_{L^\frac{1}{a}_TL^\infty}&\lesssim|\beta|^{\mu+a}\kappa(T)^2\|X'\|_{\mathcal{G}_T}^3.
			\end{align}
			We conclude from \eqref{J1}-\eqref{J3} that 
			$$
			\sup_\beta \frac{\|t^\mu\delta_\beta N(t,s)\|_{L^\frac{1}{a}_TL^\infty}}{|\beta|^{\mu+a}}\lesssim\left(1+\kappa(T)\right)^2\|X'\|_{\mathcal{G}_T}^2(1+\|X'\|_{\mathcal{G}_T})^2.
			$$
			This completes the proof.
		\end{proof}
		\begin{proposition}
			\label{propparabolicY} Let $\mathfrak{N}$ be as defined in \eqref{eqlinearize}, then for any $T\in[0,1]$,
			there holds 
			\begin{align*}
				&\sup_{0\leq\mu\leq\frac{2}{3}\atop\theta-\varepsilon'\leq \mu+a\leq1-\varepsilon'}\sup_\beta \frac{\|t^\mu\delta_\beta \mathfrak{N}(t,x)\|_{L^\frac{1}{a}_TL^\infty}}{|\beta|^{\mu+a}}\\
				&\quad\quad\lesssim\Big(1+\kappa(T)+\sup_{t\in[0,T]}(\|Z'(t)\|_{L^\infty}^{-1})\Big)^{5}(1+ \|Z'\|_{L^\infty_TL^\infty}+\|Y'\|_{\mathcal{G}_T})^{5}\|Y'\|_{\mathcal{G}_T}^2.
			\end{align*}
		\end{proposition}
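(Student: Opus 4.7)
The plan is to exploit the algebraic identities \eqref{haha1}--\eqref{haha2}. The hypothesis $\|Z'(t)\|_{L^\infty}\in[c_0,c_0^{-1}]$ from Theorem \ref{thmglobal} guarantees that $Z(t)\in\tilde{\mathcal{V}}$ for every $t\in[0,T]$, so $\mathfrak{N}(Z)=0$ and $\mathfrak{DN}[Z]Y=0$. Since $\mathcal{L}-\tfrac14\Lambda$ is linear, subtracting these two identities gives
\[
\mathfrak{N}(Y+Z)=\mathfrak{N}(Y+Z)-\mathfrak{N}(Z)-\mathfrak{DN}[Z]Y=N(Y+Z)-N(Z)-\mathfrak{D}N[Z]Y,
\]
so the object to be estimated is exactly the second-order Taylor remainder of the purely nonlinear operator $N$ at the stationary state $Z$. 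This is the algebraic reason that $\|Y'\|_{\mathcal{G}_T}^2$ (and not $\|Y'\|_{\mathcal{G}_T}$) appears in the final bound.

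Next I would expand the integrand of \eqref{Ne} explicitly. Setting $A:=\tilde\Delta_\alpha Z$ and using Lemma \ref{leH} to write
\[
H(\tilde\Delta_\alpha X)=H(A)+\tilde\Delta_\alpha Y\cdot\nabla H(A)+D_H\bigl(\tilde\Delta_\alpha X,A\bigr),
\]
together with $E^\alpha X=E^\alpha Y+E^\alpha Z$ and $\delta_\alpha X'=\delta_\alpha Y'+\delta_\alpha Z'$, the terms of $Y$-degree zero reassemble $N(Z)$ and the terms of $Y$-degree one reassemble $\mathfrak{D}N[Z]Y$; both disappear after the subtraction. What survives is a finite list of integrals whose generic shapes are
\[
\int H(A)\,E^\alpha Y_i\,\delta_\alpha Y_j'\,\tfrac{d\alpha}{\alpha},\quad \int \bigl(\tilde\Delta_\alpha Y\cdot\nabla H(A)\bigr)(E^\alpha\star)(\delta_\alpha\star')\tfrac{d\alpha}{\alpha},\quad \int D_H(\tilde\Delta_\alpha X,A)\,E^\alpha X_i\,\delta_\alpha X_j'\,\tfrac{d\alpha}{\alpha},
\]
where each placeholder $\star\in\{Y,Z\}$ but at least two $Y$-factors appear in each piece.

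For every such integral I would then transcribe the estimation scheme used in Proposition \ref{propdeltaN}, the only new ingredient being the bookkeeping of $Z$-factors. For $Z\in\tilde{\mathcal{V}}$ one has $|\tilde\Delta_\alpha Z|\sim\|Z'\|_{L^\infty}$, hence $|H(A)|\lesssim\|Z'\|_{L^\infty}^{-1}$ and $|\nabla H(A)|\lesssim\|Z'\|_{L^\infty}^{-2}$, while $|E^\alpha Z|+|\delta_\alpha Z'|\lesssim\|Z'\|_{L^\infty}|\alpha|^{\gamma}$ for the appropriate H\"older exponent via Lemma \ref{holder}; on the full configuration the well-stretched bound \eqref{Hkappa} still gives $|H(\tilde\Delta_\alpha X)|\lesssim\kappa(T)$. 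The cubic remainder $D_H$ is controlled by \eqref{H3}--\eqref{H4}, producing an additional factor $(\kappa(T)+\|Z'\|_{L^\infty}^{-1})^3|\tilde\Delta_\alpha Y|^2$. Each quadratic combination of $Y$-factors is then bounded exactly as $J_{1,1},J_{1,2},J_2,J_3$ were bounded in Proposition \ref{propdeltaN}: H\"older in time, Minkowski in the $\alpha$-integration, Lemma \ref{lem2} to deal with differences of $\tilde\Delta$'s, Lemma \ref{moveabove} and Remark \ref{Rem1} to trade derivatives for $\delta_\beta$-increments, and the identity \eqref{integral0} (still applicable because each piece carries either $H(A)$ or $\nabla H(A)$ as its even-in-$\alpha$ factor) whenever a $\delta_\beta\delta_\alpha X_j'$ must be converted to $\delta_\beta X_j'(s-\alpha)$.

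Summing the finitely many contributions, each is bounded by an expression of the form $(1+\kappa(T)+\|Z'\|_{L^\infty}^{-1})^{a'}(1+\|Z'\|_{L^\infty}+\|Y'\|_{\mathcal{G}_T})^{b'}\|Y'\|_{\mathcal{G}_T}^2$ with $a',b'\leq 5$, which gives the announced estimate. The hard part will be the combinatorial bookkeeping: one must verify that in each surviving term the H\"older exponents $(\mu,a,p_\pm,q_\pm)$ can be tuned simultaneously so as to remain in the admissible windows $0\leq\mu\leq\tfrac23$, $\theta-\varepsilon'\leq\mu+a\leq 1-\varepsilon'$, while keeping the $\alpha$-singularity integrable after the usual splitting $|\alpha|\leq|\beta|$ versus $|\alpha|\geq|\beta|$. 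No genuinely new analytic input beyond Proposition \ref{propdeltaN} is required; the calibration is essentially the one already carried out for $J_{1,1}$ there.
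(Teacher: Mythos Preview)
Your overall strategy matches the paper's: reduce $\mathfrak{N}(Y+Z)$ to $N(Y+Z)-N(Z)-\mathfrak{D}N[Z]Y$ via \eqref{haha1}--\eqref{haha2}, expand into pieces each carrying at least two $Y$-factors, and then transcribe the $J_1,J_2,J_3$ machinery of Proposition~\ref{propdeltaN}. This is exactly what the paper does.

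There is, however, a genuine gap in your handling of the cancellation \eqref{integral0}. That identity is \emph{not} a parity statement; it encodes the fact that $\sum H(\tilde\Delta_\alpha W)E^\alpha W_i/\tilde\alpha$ equals $\partial_\alpha\mathbf{G}(\delta_\alpha W)-\tfrac12\cot(\alpha/2)$, which integrates to zero only because the \emph{same} $W$ appears in both $H$ and $E^\alpha$. After your Taylor expansion $H(\tilde\Delta_\alpha X)=H(A)+\tilde\Delta_\alpha Y\cdot\nabla H(A)+D_H$, the individual pieces do not enjoy this structure: for instance $\sum\int H(A)E^\alpha Y_i\,\tfrac{d\alpha}{\alpha}$ is not zero, so when $\delta_\beta$ hits $\delta_\alpha Y_j'$ in your first generic piece you cannot convert $\delta_\beta\delta_\alpha Y_j'$ to $-\delta_\beta Y_j'(s-\alpha)$ as you propose.

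The paper avoids this by \emph{not} Taylor-expanding $H$ in the pieces carrying $\delta_\alpha Y_j'$. Its decomposition is
\[
\mathbf{R}_1=H(\tilde\Delta_\alpha X)E^\alpha Y\,\delta_\alpha Y',\quad
\mathbf{R}_2=\bigl(H(\tilde\Delta_\alpha X)-H(\tilde\Delta_\alpha Z)\bigr)E^\alpha Z\,\delta_\alpha Y',
\]
together with $\mathbf{R}_3,\mathbf{R}_4$ carrying $\delta_\alpha Z'$. When $\delta_\beta$ lands on $\delta_\alpha Y_j'$, the extra constant-in-$\alpha$ terms from $\mathbf{R}_1$ and $\mathbf{R}_2$ combine to
\[
\delta_\beta Y_j'(s)\sum\int\bigl(H(\tilde\Delta_\alpha X)E^\alpha X_i-H(\tilde\Delta_\alpha Z)E^\alpha Z_i\bigr)\tfrac{d\alpha}{\alpha},
\]
which vanishes by \eqref{integral0} applied to $X$ and to $Z$ separately. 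For $\mathbf{R}_3,\mathbf{R}_4$ no such trick is needed because $Z$ is smooth and $\delta_\beta\delta_\alpha Z'$ can be estimated directly. Once you reorganize your pieces along these lines (keep $H(\tilde\Delta_\alpha X)$ and $H(\tilde\Delta_\alpha X)-H(\tilde\Delta_\alpha Z)$ intact rather than fully expanding), the rest of your plan goes through exactly as written.
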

		\begin{proof}
			Note that by properties \eqref{haha1} and \eqref{haha2} one has
			\begin{align*}
				&\mathfrak{N}(Y+Z)=\mathfrak{N}(Y+Z)-\mathfrak{N}(Z)=\int_0^1\frac{d}{d r }\mathfrak{N}( r  Y+Z)d r =\int_0^1\mathfrak{DN}[ r  Y+Z]Yd r,\\&
				\mathfrak{DN}[ r  Y+Z]Y=\mathfrak{DN}[ r  Y+Z]Y-\mathfrak{DN}[Z]Y=\mathfrak{D}N[ r  Y+Z]Y-\mathfrak{D}N[Z]Y.
			\end{align*}
			Then we obtain
			$$
			\mathfrak{N}(Y+Z)=N(Y+Z)-N(Z)-\mathfrak{D}N[Z]Y.$$
			Recall the formula 
			$$
			N(X(s))=\sum\int H(\tilde\Delta_\alpha X(s)) E^\alpha X_i(s)  \delta_\alpha X_j'(s)\frac{d\alpha}{\alpha}.$$
			By a direct computation we obtain
			\begin{align*}
				&	N(X)-N(Z)=\sum\int H(\tilde\Delta_\alpha X) E^\alpha X_i  \delta_\alpha Y_j'\frac{d\alpha}{\alpha}+\sum\int H(\tilde\Delta_\alpha X) E^\alpha Y_i  \delta_\alpha Z_j'\frac{d\alpha}{\alpha}\\
				&\quad\quad\quad\quad\quad\quad\quad\quad\quad\quad+\sum\int [H(\tilde\Delta_\alpha X)-H(\tilde\Delta_\alpha Z)] E^\alpha Z_i \delta_\alpha Z_j'\frac{d\alpha}{\alpha},\\&
				\mathfrak{D}N[Z]Y=\sum\int H(\tilde\Delta_\alpha Z) E^\alpha Z_i  \delta_\alpha Y_j'\frac{d\alpha}{\alpha}+\sum\int H(\tilde\Delta_\alpha Z) E^\alpha Y_i  \delta_\alpha Z_j'\frac{d\alpha}{\alpha}\\
				&\quad\quad\quad\quad\quad\quad+\sum\int \mathfrak{D}\tilde H[Z]Y E^\alpha Z_i  \delta_\alpha Z_j'\frac{d\alpha}{\alpha},
			\end{align*}
			where we denote $\tilde H(Z)=H(\tilde \Delta_\alpha Z)$, hence $\mathfrak{D}\tilde H[Z]Y=\left.\frac{d}{d\epsilon}H(\tilde\Delta(Z+\epsilon Y))\right|_{\epsilon=0}$.
			Then we have
			\begin{align*}
				\mathfrak{N}(X)&=	\sum\int H(\tilde\Delta_\alpha X)E^\alpha Y_i \delta_\alpha Y_j'\frac{d\alpha}{\alpha}+\sum\int (H(\tilde\Delta_\alpha X)-H(\tilde\Delta_\alpha Z))E^\alpha Z_i \delta_\alpha Y_j'\frac{d\alpha}{\alpha}\\
				&\quad\quad+\sum\int (H(\tilde\Delta_\alpha X)-H(\tilde\Delta_\alpha Z))E^\alpha Y_i \delta_\alpha Z_j' \frac{d\alpha}{\alpha} \\&\quad\quad+\sum\int (H(\tilde\Delta_\alpha X)-H(\tilde\Delta_\alpha Z)-\mathfrak{D}\tilde H[Z]Y)E^\alpha Z_i \delta_\alpha Z_j' \frac{d\alpha}{\alpha}\\
				&:=\int\sum_{k=1}^4\mathbf{R}_k\frac{d\alpha}{\alpha}.
			\end{align*}
			Denote \begin{align*}
				&\tilde{\mathbf{R}}_1=\delta_\beta \mathbf{R}_1-\sum H(\tilde\Delta_\alpha X)E^\alpha Y_i \delta_\beta Y_j',\\
				&\tilde{\mathbf{R}}_2=\delta_\beta \mathbf{R}_2-\sum(H(\tilde\Delta_\alpha X)-H(\tilde\Delta_\alpha Z))E^\alpha Z_i \delta_\beta Y_j',\\
				&\tilde{\mathbf{R}}_k=\delta_\beta \mathbf{R}_k, \quad\quad\quad k=3,4.
			\end{align*}
			By \eqref{integral0} we have
			\begin{align*}
				&	\int \left(H(\tilde\Delta_\alpha X)E^\alpha Y_i +(H(\tilde\Delta_\alpha X)-H(\tilde\Delta_\alpha Z))E^\alpha Z_i\right) \delta_\beta Y_j'\frac{d\alpha}{\alpha}\\
				&\quad\quad\quad\quad\quad\quad=\int (H(\tilde\Delta_\alpha X)E^\alpha X_i-H(\tilde\Delta_\alpha Z)E^\alpha Z_i) \delta_\beta Y_j'\frac{d\alpha}{\alpha}=0.
			\end{align*}
			Hence we have
			$$
			\delta_\beta	\mathfrak{N}(X)=\int\sum_{k=1}^4\delta_\beta\mathbf{R}_k\frac{d\alpha}{\alpha}=\int\sum_{k=1}^4\tilde{\mathbf{R}}_k\frac{d\alpha}{\alpha}.$$
			Denote 
			\begin{align*}
				&\mathbf{P}_1=-\sum H(\tilde\Delta_\alpha X)E^\alpha Y_i\delta_\beta Y_j'(\cdot-\alpha),\quad\mathbf{P}_2=\sum H(\tilde\Delta_\alpha X)\delta_\beta E^\alpha Y_i\delta_\alpha Y_j'(\cdot-\beta),\\
				&\mathbf{P}_3=-\sum [H(\tilde\Delta_\alpha X)-H(\tilde\Delta_\alpha Z)] E^\alpha Z_i\delta_\beta Y_j'(\cdot-\alpha),\\ &\mathbf{P}_4=\sum (H(\tilde\Delta_\alpha X)-H(\tilde\Delta_\alpha Z))\delta_\beta E^\alpha Y_i \delta_\alpha Z_j' .
			\end{align*}
			Note that for any function $f_1,f_2,f_3$,
			$$
			|\delta_\beta (f_1f_2f_3)-f_1f_2\delta_\beta f_3-f_1\delta_\beta f_2 f_3(\cdot-\beta)|= |\delta_\beta f_1||f_2(\cdot-\beta)||f_3(\cdot-\beta)|,$$
			hence we have	
			\begin{align*}
				&|\tilde{\mathbf{R}}_1-\mathbf{P}_1-\mathbf{P}_2|=\left|	\delta_\beta\mathbf{R}_1-\sum H(\tilde\Delta_\alpha X)E^\alpha Y_i \delta_\beta\delta_\alpha Y_j'-\sum H(\tilde\Delta_\alpha X)\delta_\beta E^\alpha Y_i\delta_\alpha Y_j'(\cdot-\beta)\right|\\
				&\quad\quad\quad\quad\quad\quad~~=|\delta_\beta H(\tilde\Delta_\alpha X) |\left|E^\alpha Y(\cdot-\beta)\right|\left|\delta_\alpha Y'(\cdot-\beta)\right|,\\
				&|\tilde{\mathbf{R}}_2-\mathbf{P}_3|=\left|	\delta_\beta\mathbf{R}_2-\sum(H(\tilde\Delta_\alpha X)-H(\tilde\Delta_\alpha Z))E^\alpha Z_i \delta_\beta \delta_\alpha Y_j'\right|\\
				&\leq \left(|H(\tilde\Delta_\alpha X)-H(\tilde\Delta_\alpha Z)||\delta_\beta E^\alpha Z|+|\delta_\beta (H(\tilde\Delta_\alpha X)-H(\tilde\Delta_\alpha Z))||E^\alpha Z(\cdot-\beta)|\right)|\delta_\alpha Y'(\cdot-\beta)|,\\
				&	|\tilde{\mathbf{R}}_3-\mathbf{P}_4|=\left|	\delta_\beta\mathbf{R}_3-\sum (H(\tilde\Delta_\alpha X)-H(\tilde\Delta_\alpha Z))\delta_\beta E^\alpha Y_i \delta_\alpha Z_j'\right|\\
				&\lesssim\left(|H(\tilde\Delta_\alpha X)-H(\tilde\Delta_\alpha Z)||\delta_\beta\delta_\alpha Z'|+|\delta_\beta(H(\tilde\Delta_\alpha X)-H(\tilde\Delta_\alpha Z))||\delta_\alpha Z'(\cdot-\beta)|\right)|E^\alpha Y(\cdot-\beta)|,
					\\&|\tilde{\mathbf{R}}_4|=\left|	\delta_\beta\mathbf{R}_4\right|\lesssim |H(\tilde\Delta_\alpha X)-H(\tilde\Delta_\alpha Z)-\mathfrak{D}\tilde H[Z]Y|\left(|E^\alpha Z_i|| \delta_\beta\delta_\alpha Z_j'|+|\delta_\beta E^\alpha Z_i|| \delta_\alpha Z_j'(\cdot-\beta)|\right)\\
				&\quad\quad+ |\delta_\beta(H(\tilde\Delta_\alpha X)-H(\tilde\Delta_\alpha Z)-\mathfrak{D}\tilde H[Z]Y)||(E^\alpha Z_i \delta_\alpha Z_j')(\cdot-\beta)|.
			\end{align*}
			Moreover, by Lemma \ref{leH}  we have 
			\begin{align*}
				&\|\delta_\beta H(\tilde\Delta_\alpha X) \|_{L^\infty}\overset{\eqref{H1}}\lesssim \kappa(t)^2\|\delta_\beta\tilde\Delta_\alpha X\|_{L^\infty},\\
				&\|H(\tilde\Delta_\alpha X)-H(\tilde\Delta_\alpha Z)-\mathfrak{D}\tilde H[Z]Y\|_{L^\infty}\overset{\eqref{H3}}\lesssim \left(1+\kappa(t)+\|Z'(t)\|^{-1}_{L^\infty}\right)^5\|\tilde\Delta_\alpha Y\|_{L^\infty}^2,\\
				&\|\delta_\beta (H(\tilde\Delta_\alpha X)-H(\tilde\Delta_\alpha Z))\|_{L^\infty} \overset{\eqref{H2}}\lesssim\left(\|\delta_\beta \tilde\Delta_\alpha Y\|_{L^\infty}+\|\tilde\Delta_\alpha Y\|_{L^\infty}(\|\delta_\beta \tilde\Delta_\alpha Z\|_{L^\infty}+\|\delta_\beta \tilde\Delta_\alpha Y\|_{L^\infty})\right)\\
				&\quad\quad\quad\quad\quad\quad\times\left(1+\kappa(t)+\|Z'(t)\|^{-1}_{L^\infty}\right)^5,\\
				&\|\delta_\beta(H(\tilde\Delta_\alpha X)-H(\tilde\Delta_\alpha Z)-\mathfrak{D}\tilde H[Z]Y)\|_{L^\infty}\\
				&\quad\quad\quad\overset{\eqref{H4}}\lesssim \left(\|\delta_\beta \tilde\Delta_\alpha Y\|_{L^\infty}+\|\tilde\Delta_\alpha Y\|_{L^\infty}(\|\delta_\beta \tilde\Delta_\alpha Z\|_{L^\infty}+\|\delta_\beta \tilde\Delta_\alpha Y\|_{L^\infty})\right)\\
				&\quad\quad\quad\quad\quad\quad\times\|\tilde\Delta_\alpha Y\|_{L^\infty}\left(1+\kappa(t)+\|Z'(t)\|^{-1}_{L^\infty}\right)^5.
			\end{align*}
			From the above estimates we obtain
			\begin{align*}
				&|\tilde{\mathbf{R}}_1-\mathbf{P}_1-\mathbf{P}_2|\lesssim \kappa(t)^2\|\delta_\beta \tilde\Delta_\alpha X\|_{L^\infty}\|E^\alpha Y\|_{L^\infty}\|\delta_\alpha Y'\|_{L^\infty},\\
				&|\tilde{\mathbf{R}}_2-\mathbf{P}_3|+|\tilde{\mathbf{R}}_3-\mathbf{P}_4|\lesssim\bigg\{\|\tilde\Delta_\alpha Y\|_{L^\infty}\left(\|\delta_\beta E^\alpha Z\|_{L^\infty}\|\delta_\alpha Y'\|_{L^\infty}+\| E^\alpha Y\|_{L^\infty}\|\delta_\beta\delta_\alpha Z'\|_{L^\infty}\right)\\
				&\quad+\left(\|\delta_\beta \tilde\Delta_\alpha Y\|_{L^\infty}(1+\|\tilde\Delta_\alpha Y\|_{L^\infty})+\|\tilde\Delta_\alpha Y\|_{L^\infty}\|\delta_\beta \tilde\Delta_\alpha Z\|_{L^\infty}\right)\\
				&\quad\quad\times\left(\|E^\alpha Z\|_{L^\infty}\|\delta_\alpha Y'\|_{L^\infty}+\|E^\alpha Y\|_{L^\infty}\|\delta_\alpha Z'\|_{L^\infty}\right)\bigg\}\left(1+\kappa(t)+\|Z'(t)\|^{-1}_{L^\infty}\right)^5,\\
				&|\tilde{\mathbf{R}}_4|\lesssim\left(1+\kappa(t)+\|Z'(t)\|^{-1}_{L^\infty}\right)^5\left\{\|\tilde\Delta_\alpha Y\|_{L^\infty}\| E^\alpha Z\|_{L^\infty}\|\delta_\alpha Z'\|_{L^\infty}\right.\\
				&\quad\quad\quad\quad\quad\quad\quad\times\left(\|\delta_\beta \tilde\Delta_\alpha Y\|_{L^\infty}(1+\|\tilde\Delta_\alpha Y\|_{L^\infty})+\|\tilde\Delta_\alpha Y\|_{L^\infty}\|\delta_\beta \tilde\Delta_\alpha Z\|_{L^\infty}\right)\\
				&\quad\quad\quad\quad+\left.\|\tilde\Delta_\alpha Y\|_{L^\infty}^2(\| E^\alpha Z\|_{L^\infty}\|\delta_\beta\delta_\alpha Z'\|_{L^\infty}+\|\delta_\beta E^\alpha Z\|_{L^\infty}\|\delta_\alpha Z'\|_{L^\infty})\right\}.
			\end{align*}
			Note that 
			$
			\int (\tilde{\mathbf{R}}_1-\mathbf{P}_1-\mathbf{P}_2)\frac{d\alpha}{\alpha}=\int_{-\pi}^{\pi} (\tilde{\mathbf{R}}_1-\mathbf{P}_1-\mathbf{P}_2)\frac{d\alpha}{\tilde\alpha}.$
			Hence 
			\begin{align*}
				\left|\int \left(\tilde{\mathbf{R}}_1-\mathbf{P}_1-\mathbf{P}_2\right)\frac{d\alpha}{\alpha}\right|&\lesssim \kappa(t)^2\int_{-\pi}^\pi\|\delta_\beta \delta_\alpha X\|_{L^\infty}\|E^\alpha Y\|_{L^\infty}\|\delta_\alpha Y'\|_{L^\infty}\frac{d\alpha}{|\tilde\alpha|^2}\\
				&\lesssim \kappa(t)^2\int\|\delta_\beta \delta_\alpha X\|_{L^\infty}\|E^\alpha Y\|_{L^\infty}\|\delta_\alpha Y'\|_{L^\infty}\frac{d\alpha}{|\alpha|^2}.
			\end{align*}
			By \eqref{12haha} we obtain
			$$	\left|\int \left(\tilde{\mathbf{R}}_1-\mathbf{P}_1-\mathbf{P}_2\right)\frac{d\alpha}{\alpha}\right|\lesssim \kappa(t)^2\|\delta_\beta \Lambda^\frac{3}{4} X\|_{L^\infty}\int \|E^\alpha Y\|_{L^\infty}\|\delta_\alpha Y'\|_{L^\infty}\frac{d\alpha }{|\alpha|^\frac{5}{4}}.$$
			Applying H\"{o}lder's inequality and Minkowski inequality  we obtain
			\begin{align}
				&\left\|t^\mu \int \left(\tilde{\mathbf{R}}_1-\mathbf{P}_1-\mathbf{P}_2\right)\frac{d\alpha}{\alpha}\right\|_{L^\frac{1}{a}_TL^\infty}\label{errorter}\\
				&\lesssim\kappa(T)^2\int\min_{+,-}\left\{\|t^\mu\delta_\beta\Lambda^\frac{3}{4} X\|_{L_T^\frac{4}{4a-1\pm\varepsilon'}L^\infty}\|E^\alpha Y\delta_\alpha Y'\|_{L_T^\frac{4}{1\mp\varepsilon'}L^\infty}\right\}\frac{d\alpha}{|\alpha|^\frac{5}{4}}\nonumber\\
				&\lesssim|\beta|^{\mu+a} \kappa(T)^2\|Y'\|_{\mathcal{G}_T}^2(\|Y'\|_{\mathcal{G}_T}+\|Z'\|_{L^\infty_TL^\infty}),\nonumber
			\end{align}
			where in the last inequality we follow the estimates of $J_3$ in Proposition \ref{propdeltaN}. We also use the fact that $\|X'\|_{\mathcal{G}_T}\lesssim\|Y'\|_{\mathcal{G}_T}+\|Z'\|_{L^\infty_TL^\infty} $. 
			Then we estimate $	\left|\int(\tilde{\mathbf{R}}_2-\mathbf{P}_3)+(\tilde{\mathbf{R}}_3-\mathbf{P}_4)+\tilde{\mathbf{R}}_4\frac{d\alpha}{\alpha}\right|$. From the above discussion, it suffices to consider $\alpha\in(-\pi,\pi)$. Then $\|\tilde\Delta_\alpha f\|_{L^\infty}\lesssim \|\Delta _\alpha f\|_{L^\infty}$ for any  function $f$.
			Note that $Y$ is periodic,  by Lemma \ref{holder} we have for any $0<\gamma_1<\gamma_2$
			$$
			\|Y\|_{\dot C^{\gamma_1}}\lesssim \|Y\|_{\dot C^{\gamma_2}}.$$
			Moreover, we have $Z(t,\cdot)\in\mathcal{V}$, hence for any $\gamma>0$,
			\begin{align}\label{zzzzz}
				\|Z\|_{\dot C^{\gamma}}=c_\gamma \|Z'\|_{L^\infty}.
			\end{align}
			Hence 
			$$
			\int\|\delta_\beta \Delta_\alpha Y\|_{L^\infty}\|E^\alpha Z\|_{L^\infty}\|\delta_\alpha Y'\|_{L^\infty}\frac{d\alpha}{|\alpha|}\lesssim|\beta|^{a+\mu} \|Y'\|_{L^\infty}^2\|Z'\|_{L^\infty}.$$
			Other terms can be estimated similarly, we conclude that
			\begin{align*}
				&\left|\int\left((\tilde{\mathbf{R}}_2-\mathbf{P}_3)+(\tilde{\mathbf{R}}_3-\mathbf{P}_4)+\tilde{\mathbf{R}}_4\right)\frac{d\alpha}{\alpha}\right|\\
				&\lesssim |\beta|^{a+\mu} \left(1+\kappa(t)+\|Z'(t)\|_{L^\infty}^{-1}\right)^5\left(1+\|Z'(t)\|_{L^\infty}+\|Y'(t)\|_{L^\infty}\right)^3\|Y'(t)\|_{L^\infty}^2.
			\end{align*}
			By definition, it is easy to check that for any $T\in[0,1]$, $\gamma_1\geq 0$, and $ 2\varepsilon'\leq\gamma_2\leq\theta-\varepsilon'$, there holds \begin{align}\label{hhhh}\|Y'\|_{L^\frac{1}{\gamma_2}_T\dot C^{\gamma_1}}\lesssim\|Y'\|_{\mathcal{G}_T}.
			\end{align}
			Hence by H\"{o}lder's inequality we obtain
			\begin{align}
				\label{P5}	&\left\|t^\mu \int\left((\tilde{\mathbf{R}}_2-\mathbf{P}_3)+(\tilde{\mathbf{R}}_3-\mathbf{P}_4)+\tilde{\mathbf{R}}_4\right)\frac{d\alpha}{\alpha}\right\|_{L^\frac{1}{a}_TL^\infty}\\
				&\lesssim |\beta|^{a+\mu} \Big(1+\kappa(T)+\sup_{t\in[0,T]}(\|Z'(t)\|_{L^\infty}^{-1})\Big)^5\left(1+\|Z'\|_{L^\infty_TL^\infty}+\|Y'\|_{\mathcal{G}_T}\right)^3\|Y'\|_{\mathcal{G}_T}^2.\nonumber
			\end{align}
			It remains to estimate the main terms $P_k=\int\mathbf{P}_k\frac{d\alpha}{\alpha}$, $k=1,2,3,4$. We first estimate
			\begin{align*}
				&	P_1=-\int H(\tilde\Delta_\alpha X)E^\alpha Y_i\delta_\beta Y_j'(\cdot-\alpha)\frac{d\alpha}{\alpha},\quad P_2=\int H(\tilde\Delta_\alpha X)\delta_\beta E^\alpha Y_i\delta_\alpha Y_j'(\cdot-\beta)\frac{d\alpha}{\alpha}.
			\end{align*}
			Note that \begin{align*}
				&\delta_\beta Y'(s-\alpha)=\partial_\alpha(\alpha\Delta_\alpha \delta_\beta Y(s)),\\
				& \delta_\beta E^\alpha Y(s)= \alpha \partial_\alpha\left(\Delta_\alpha\delta_\beta Y(s)\right)+\delta_\alpha\delta_\beta Y(s)\left(\frac{1}{\alpha}-\frac{1}{\tilde\alpha}\right),
			\end{align*} hence one has 
			\begin{align*}
				&|P_1|\lesssim \left|\int H(\tilde\Delta_\alpha X)E^\alpha Y_i\partial_\alpha(\Delta_\alpha \delta_\beta Y_j)d\alpha\right|+\int|H(\tilde\Delta_\alpha X)E^\alpha Y_i\delta_\alpha\delta_\beta Y_j|\frac{d\alpha}{|\alpha|^2},\\
				&|P_2|\lesssim \left|\int H(\tilde\Delta_\alpha X)\partial_\alpha\left(\Delta_\alpha\delta_\beta Y_i(s)\right)\delta_\alpha Y_j'(\cdot-\beta)d\alpha\right|+\int\left| H(\tilde\Delta_\alpha X)\delta_\alpha\delta_\beta Y_i(s)\delta_\alpha Y_j'(\cdot-\beta)\right|\frac{d\alpha}{|\alpha|^2}.
			\end{align*}
			Here the last inequality follows from \eqref{z2}. 
			To estimate the above terms, we can follow the estimates of $J_{1}$ in Proposition \ref{propdeltaN}. 
			We conclude that 
			\begin{equation}\label{P13}
				\|t^\mu{P}_{1}\|_{L^\frac{1}{a}_TL^\infty}+	\|t^\mu{P}_{2}\|_{L^\frac{1}{a}_TL^\infty}\lesssim |\beta|^{\mu+a}\left(1+\kappa(T)\right)^2\|Y'\|_{\mathcal{G}_T}^2(1+\|Y'\|_{\mathcal{G}_T}+\|Z'\|_{L^\infty_TL^\infty}).
			\end{equation}
			Then we estimate
			$$
			{P}_{3}=-\int [H(\tilde\Delta_\alpha X)-H(\tilde\Delta_\alpha Z)] E^\alpha Z_i\delta_\beta Y_j'(\cdot-\alpha)\frac{d\alpha}{ \alpha}.$$
			Note that $Y'(s-\alpha)=\partial_\alpha ( \alpha \Delta_\alpha Y(s))$. Hence 
			\begin{align*}
				|P_{3}|\lesssim& \left|\int [H(\tilde\Delta_\alpha X(s)) -H(\tilde\Delta_\alpha Z(s))] E^\alpha Z_i(s)\partial_\alpha ( \Delta_\alpha \delta_\beta Y_j(s))d\alpha\right|\\
				&+\int\left| [H(\tilde\Delta_\alpha X(s)) -H(\tilde\Delta_\alpha Z(s))] E^\alpha Z_i(s) \delta_\alpha \delta_\beta Y_j(s)\right|\frac{d\alpha}{|\alpha|^2}\\
				=&P_{3,1}+P_{3,2}.
			\end{align*}
			Let $f_s(t,\alpha)=[H(\tilde\Delta_\alpha X(s)) -H(\tilde\Delta_\alpha Z(s))] E^\alpha Z_i(s)$, $g(t,s)=t^\mu \delta_\beta Y_j(t,s)$. Then by \eqref{H2} there holds
				\begin{align}\nonumber
					\sup_s|\delta_zf_s(t,\cdot)(\alpha)|&\lesssim \Big\{\left(\|\tilde\Delta_\alpha Y-\tilde\Delta_{\alpha-z} Y\|_{L^\infty}(1+\|Y'\|_{L^\infty})+\| Y'\|_{L^\infty}\|\tilde\Delta_\alpha Z-\tilde\Delta_{\alpha-z} Z\|_{L^\infty}\right)\|E^\alpha Z\|_{L^\infty}\\
					&\quad\quad+\| Y'\|_{L^\infty}\|\delta_z Z'\|_{L^\infty}\Big\}
					\times\left(1+\kappa(t)+\|Z'(t)\|_{L^\infty}^{-1}\right)^5,\label{deltazf}
				\end{align}
			where we also use the fact that $\|\tilde \Delta_\alpha Y\|_{L^\infty}\lesssim \|Y'\|_{L^\infty}$. 
			For any $p, q$ such that $4\varepsilon'\leq\frac{1}{p}\leq \theta-\varepsilon'$ and   $2\varepsilon'\leq\frac{1}{q}+\mu\leq\frac{3}{4}-\frac{\varepsilon'}{2}, q\leq 30$, denote
			\begin{align*}
				&\tilde V_{q,\mu}(\beta)=\|t^\mu\delta_\beta \Lambda^{\frac{3+\varepsilon'}{4}}Y\|^\frac{1}{2}_{L^q_TL^\infty}\|t^\mu\delta_\beta \Lambda^{\frac{3-\varepsilon'}{4}}Y\|^\frac{1}{2}_{L^q_TL^\infty},
				\\&
				\tilde U_p=
				\sup_{\alpha,z}\frac{\left\|\|\tilde\Delta_\alpha Y-\tilde\Delta_{\alpha-z} Y\|_{L^\infty}\|E^\alpha Z\|_{L^\infty}\right\|_{L^{p}_T}}{|z|^\frac{2}{p}}.
			\end{align*}
			By  Remark \ref{Rem1}, it is easy to check that 
			\begin{align}\label{tildeV}
				\tilde V_{q,\mu}(\beta)&\lesssim|\beta|^{\frac{1}{q}+\frac{1}{4}+\mu} \|Y'\|_{\mathcal{G}_T}.
			\end{align}
			By 	Lemma \ref{lem2} and \eqref{zzzzz}we have
			\begin{align}\label{tildeU}
				\tilde U_p\lesssim\sup_\alpha \frac{\|\delta_\alpha Y'\|_{L^{p}_TL^\infty}}{|\alpha|^\frac{1}{p}}\sup_\alpha \frac{\|\delta_\alpha Z'\|_{L^\infty_TL^\infty}}{|\alpha|^\frac{1}{p}}\lesssim \|Y'\|_{\mathcal{G}_T}\|Z'\|_{L^\infty_T L^\infty}.
			\end{align}
			Applying Remark \ref{remmovedeT} with above $f_s$, $g$ and parameters $\sigma=\frac{1}{4}$, $p=\frac{1}{a}$, $p_{1\pm}=\frac{4}{1\pm\varepsilon'}$, $p_{2\pm}=\frac{4}{4a\pm\varepsilon'-1}$, we have 
			\begin{align*}
				&\|t^\mu P_{3,1}\|_{L^\frac{1}{a}_TL^\infty}\lesssim\sup_{\alpha,s}\int\min_{+,-}\left\{ \|\delta_zf_s(t,\cdot)(\alpha)\|_{L^\frac{4}{1\pm\varepsilon'}_T}\tilde V_{\frac{4}{4a\mp\varepsilon'-1},\mu}(\beta)\right\}\frac{dz}{|z|^\frac{5}{4}}.
			\end{align*}
			By Lemma \ref{lem2} and \eqref{zzzzz}, it is easy to check that for any $p>1$
			\begin{align*}
				\int\left(\|\tilde\Delta_\alpha Z-\tilde\Delta_{\alpha-z} Z\|_{L^p_TL^\infty}+\|\delta_z Z'\|_{L^p_TL^\infty}\right)\frac{dz}{|z|^\frac{5}{4}}\lesssim \|Z'\|_{L^\infty_TL^\infty}.
			\end{align*}
			Combining this with \eqref{hhhh} one has
			\begin{align*}
				M_1^\pm:&=\left\|\left(\| Y'\|_{L^\infty}\|\tilde\Delta_\alpha Z-\tilde\Delta_{\alpha-z} Z\|_{L^\infty}\|E^\alpha Z\|_{L^\infty}+\| Y'\|_{L^\infty}\|\delta_z Z'\|_{L^\infty}\right)\right\|_{L_T^\frac{4}{1\pm\varepsilon'}}\\
				&\lesssim  |z|^\frac{1\pm\varepsilon'}{4} \|Y'\|_{\mathcal{G}_T}(1+\|Z'\|_{L^\infty_TL^\infty})^2.
			\end{align*}
			Moreover, by H\"{o}lder's inequality, \eqref{tildeU} and \eqref{hhhh} we have 
			\begin{align*}
				M_2^\pm:&=\sup_\alpha \left\|\|\tilde\Delta_\alpha Y-\tilde\Delta_{\alpha-z} Y\|_{L^\infty}(1+\|Y'\|_{L^\infty})\|E^\alpha Z\|_{L^\infty}\right\|_{L_T^\frac{4}{1\pm\varepsilon'}}\\
				&\lesssim |z|^\frac{1\pm\varepsilon'}{4}\tilde U_{\frac{8}{1\pm\varepsilon'}}(1+\|Y'\|_{L_T^\frac{8}{1\mp\varepsilon'}L^\infty})\lesssim |z|^\frac{1\pm\varepsilon'}{4}\|Y'\|_{\mathcal{G}_T}\|Z'\|_{L^\infty_T L^\infty}(1+\|Y'\|_{\mathcal{G}_T}).
			\end{align*}
			By \eqref{deltazf} we have 
			$$
			\sup_{\alpha,s} \|\delta_zf_s(t,\cdot)(\alpha)\|_{L^\frac{4}{1\pm\varepsilon'}_T}\lesssim (1+\kappa(T)+\sup_{t\in[0,T]}(\|Z'(t)\|_{L^\infty}^{-1}))^5(M_1^\pm+	M_2^\pm).	
			$$
			Hence we obtain
			\begin{align*}
				&\|t^\mu P_{3,1}\|_{L^\frac{1}{a}_TL^\infty}\lesssim \int \min_{+,-}\{(M_1^\pm+M_2^\pm)\tilde V_{\frac{4}{4a-1\mp\varepsilon'},\mu}(\beta)\}\frac{dz}{|z|^\frac{5}{4}}(1+\kappa(T)+\sup_{t\in[0,T]}(\|Z'(t)\|_{L^\infty}^{-1}))^5\\
				&	\lesssim\left( \int\min_{+,-}\left\{|z|^{\pm\varepsilon'}\tilde V_{\frac{4}{4a-1\mp\varepsilon'},\mu}(\beta)\right\}\frac{dz}{|z|}\right)(1+\|Y'\|_{\mathcal{G}_T})(1+\|Z'\|_{L^\infty_TL^\infty})^2\|Y'\|_{\mathcal{G}_T}\\
				&\quad\quad\quad\times(1+\kappa(T)+\sup_{t\in[0,T]}(\|Z'(t)\|_{L^\infty}^{-1}))^5\\
				&\overset{\eqref{tildeV}}\lesssim |\beta|^{\mu+a}(1+\kappa(T)+\sup_{t\in[0,T]}(\|Z'(t)\|_{L^\infty}^{-1}))^5(1+\|Z'\|_{L^\infty_TL^\infty}+\|Y'\|_{\mathcal{G}_T})^3\|Y'\|_{\mathcal{G}_T}^2.
			\end{align*}
			Then we estimate $P_{3,2}$. By \eqref{H1} we have 
			\begin{align*}
				\|H(\tilde\Delta_\alpha X(t))-H(\tilde\Delta_\alpha Z(t))\|_{L^\infty}&\lesssim \|\tilde\Delta_\alpha Y(t)\|_{L^\infty}(\kappa(t)+\|Z'(t)\|_{L^\infty}^{-1})^2\lesssim \|Y'(t)\|_{L^\infty}(\kappa(t)+\|Z'(t)\|_{L^\infty}^{-1})^2.
			\end{align*}
			Moreover, one has
$$
				\int\|E^\alpha Z\|_{L^\infty}\|\delta_\alpha\delta_\beta Y\|_{L^\infty}\frac{d\alpha}{|\alpha|^2}\overset{\eqref{zzzzz}}\lesssim |\beta|^{\mu+a} \| Y'\|_{L^\infty}\|Z'\|_{L^\infty}.$$
			Then 
			\begin{align*}
				\|t^\mu P_{3,2}\|_{L^\frac{1}{a}_TL^\infty}&\lesssim|\beta|^{\mu+a} (\kappa(T)+\sup_{t\in[0,T]}(\|Z'(t)\|_{L^\infty}^{-1}))^2\|Z'\|_{L^\infty_TL^\infty}\|Y'\|_{L^\frac{1}{2a}_TL^\infty}^2\\
				&\overset{\eqref{hhhh}}\lesssim|\beta|^{\mu+a} (\kappa(T)+\sup_{t\in[0,T]}(\|Z'(t)\|_{L^\infty}^{-1}))^2\|Z'\|_{L^\infty_TL^\infty}\|Y'\|_{\mathcal{G}_T}^2.
			\end{align*}
			Hence we conclude that 
			\begin{align}\label{P2}
				\|t^\mu P_3\|_{L^\frac{1}{a}_TL^\infty}\lesssim|\beta|^{\mu+a} (1+\|Z'\|_{L^\infty_TL^\infty}+\|Y'\|_{\mathcal{G}_T})^3\|Y'\|_{\mathcal{G}_T}^2\Big(1+\kappa(T)+\sup_{t\in[0,T]}(\|Z'(t)\|_{L^\infty}^{-1})\Big)^{5}.
			\end{align}
			Note that $P_4$ can be estimated similarly as $P_2$. Combining \eqref{errorter}, \eqref{P5}, \eqref{P13} and \eqref{P2}, we obtain the result.
		\end{proof}
		\vspace{0.3cm}\\
		Recall the definition \eqref{defQ}, denote $Q(t)=Q_X(t)$. We have the following results.
		\begin{proposition} \label{propQT}
			Let $X\in \mathcal{G}_T^1$ be a solution of \eqref{peskin} on $[0,T]$, there holds
			\begin{align*}
				Q(T)\lesssim (1+\kappa(T))^4\|X'\|_{\mathcal{G}_T}(1+\|X'\|_{\mathcal{G}_T})^2.
			\end{align*}
		\end{proposition}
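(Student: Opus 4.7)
The starting point is the pointwise identity
\begin{equation*}
\frac{1}{|\Delta_\alpha X(t,s)|}-\frac{1}{|\Delta_\alpha X(0,s)|}=-\int_0^t\frac{\Delta_\alpha X(\tau,s)\cdot \Delta_\alpha\partial_\tau X(\tau,s)}{|\Delta_\alpha X(\tau,s)|^3}\,d\tau,
\end{equation*}
which, combined with the well-stretched bound $|\Delta_\alpha X(\tau,s)|\ge \kappa(T)^{-1}$, yields
\begin{equation*}
\left|\frac{1}{|\Delta_\alpha X(t,s)|}-\frac{1}{|\Delta_\alpha X(0,s)|}\right|\le \kappa(T)^2\int_0^t\frac{|\delta_\alpha\partial_\tau X(\tau,s)|}{|\alpha|}\,d\tau.
\end{equation*}
The Peskin equation $\partial_\tau X=-\tfrac14\Lambda X+N(X)$ then splits the integrand into a linear and a nonlinear contribution, each of which I will estimate in $L^\infty_s$ after inserting the weight $|\alpha|^{\varepsilon'}/t^{\varepsilon'}$.

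For the linear piece, I apply Remark \ref{Rem1} with $b_1=1$ and an admissible pair $(\mu,b)$ with $\mu+b=\theta-\varepsilon'$, giving $\|\tau^\mu\delta_\alpha\Lambda X\|_{L^{1/b}_TL^\infty}\lesssim |\alpha|^{\mu+b}\|X'\|_{\mathcal{G}_T}$. H\"older's inequality in $\tau$ with the weight $\tau^{-\mu}$ then produces
\begin{equation*}
\frac{1}{|\alpha|}\int_0^t|\delta_\alpha\Lambda X(\tau,\cdot)|\,d\tau\lesssim t^{1-\mu-b}|\alpha|^{\mu+b-1}\|X'\|_{\mathcal{G}_T}=(t/|\alpha|)^{1-\theta+\varepsilon'}\cdot |\alpha|^{-\varepsilon'}\|X'\|_{\mathcal{G}_T}\cdot t^{\varepsilon'}/t^{\varepsilon'},
\end{equation*}
so after multiplying by $|\alpha|^{\varepsilon'}/t^{\varepsilon'}$ the ratio reduces to $(t/|\alpha|)^{1-\theta}\|X'\|_{\mathcal{G}_T}$. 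For the nonlinear piece I use Proposition \ref{propdeltaN} to bound $\|\tau^\mu\delta_\alpha N(X)\|_{L^{1/a}_TL^\infty}$ by $|\alpha|^{\mu+a}$ times the quantity $(1+\kappa(T))^2\|X'\|_{\mathcal{G}_T}^2(1+\|X'\|_{\mathcal{G}_T})^2$, and apply the same H\"older step with $\mu+a=\theta-\varepsilon'$; this contributes the full nonlinear factor.

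To absorb the bad power $(t/|\alpha|)^{1-\theta}$ (which is harmless only when $t\le|\alpha|$, since $1-\theta>0$), I perform a dyadic splitting in the regime $t\ge|\alpha|$: writing $\int_0^t=\int_0^{|\alpha|}+\int_{|\alpha|}^t$, the first integral is controlled exactly as above and produces $\lesssim\|X'\|_{\mathcal{G}_T}$ with the correct scaling, while on $[|\alpha|,t]$ I choose the time weight $\mu$ close to $\tfrac23$ and $b$ close to $\theta-\mu-\varepsilon'$ so that $\mu(1-b)^{-1}>1$; then the auxiliary time integral $\int_{|\alpha|}^\infty\tau^{-\mu/(1-b)}\,d\tau\lesssim|\alpha|^{1-\mu/(1-b)}$ converges uniformly in $t$ and delivers a bound independent of $t$, which combines with $(|\alpha|/t)^{\varepsilon'}\le 1$ to yield the required estimate.

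The main obstacle is precisely this matching of exponents: the constraint $\mu+b\le \theta-\varepsilon'<1$ baked into $\mathcal{G}_T$ prevents a one-shot H\"older bound from absorbing the singular factor $|\alpha|^{-1}$ uniformly in $t/|\alpha|$, and forces the two-regime argument above. Once that is carried out, the factor $\kappa(T)^2$ from the reciprocal derivative combines with the $(1+\kappa(T))^2$ coming from Proposition \ref{propdeltaN} to give $(1+\kappa(T))^4$, and the polynomial dependence on $\|X'\|_{\mathcal{G}_T}$ is exactly as in the statement.
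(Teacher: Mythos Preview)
Your starting identity and the reduction to estimating $\kappa(T)^2\int_0^t\big(|\Delta_\alpha\Lambda X|+|\Delta_\alpha N|\big)\,d\tau$ match the paper exactly. The error is in your two-regime repair on $[|\alpha|,t]$: you claim one can choose $\mu$ near $\tfrac23$ and $b$ near $\theta-\mu-\varepsilon'$ so that $\mu/(1-b)>1$, but this is arithmetically impossible. The constraint $b\le\theta-\mu-\varepsilon'$ forces $\mu+b\le\theta-\varepsilon'<1$, hence $\mu<1-b$ and $\mu/(1-b)<1$ for \emph{every} admissible pair. Your tail integral $\int_{|\alpha|}^\infty\tau^{-\mu/(1-b)}\,d\tau$ therefore diverges and the argument does not close. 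The same obstruction blocks any attempt to kill the factor $(t/|\alpha|)^{1-\theta}$ by playing with $\mu$ inside the $\mathcal{G}_T$ range.

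The paper's proof is much shorter and avoids the issue altogether by never introducing the weight $\tau^\mu$. With $\mu=0$ one applies H\"older in $\tau$ with exponent $\tfrac{1}{1-\varepsilon'}$; this produces exactly the factor $t^{\varepsilon'}$, which cancels the $t^{-\varepsilon'}$ in $Q$ and leaves
\[
\sup_{t\le T}\frac{|\alpha|^{\varepsilon'}}{t^{\varepsilon'}}\int_0^t|\Delta_\alpha N|\,d\tau \;\lesssim\; \sup_\alpha\frac{\|\delta_\alpha N\|_{L^{1/(1-\varepsilon')}_T L^\infty}}{|\alpha|^{1-\varepsilon'}},
\]
and similarly for $\Lambda X$. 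For the nonlinear piece this is precisely Proposition~\ref{propdeltaN} with $\mu=0$, $a=1-\varepsilon'$ --- note that $a=1-\varepsilon'$ sits at the \emph{upper} endpoint of the admissible window $\theta-\varepsilon'\le\mu+a\le 1-\varepsilon'$, so no splitting in $t/|\alpha|$ is ever needed. For the linear piece the analogous bound $\sup_\alpha|\alpha|^{-(1-\varepsilon')}\|\delta_\alpha\Lambda X\|_{L^{1/(1-\varepsilon')}_T L^\infty}\lesssim\|X'\|_{\mathcal{G}_T}$ is asserted directly (same mechanism as Remark~\ref{Rem1}). The whole proof is then three lines.
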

		\begin{proof}
			Note that 
			\begin{align*}
				\frac{1}{|\Delta_\alpha X(t,\cdot)(s)|}-\frac{1}{|\Delta_\alpha X_{0}(s)|}&=\int_0^t \partial_t\left(\frac{1}{|\Delta_\alpha X(\tau,\cdot)(s)|}\right)d\tau\\
				&\leq \kappa(t)^2 \int_0^t |\Delta_\alpha \Lambda X(\tau,\cdot)(s)|+|\Delta_\alpha N(X(\tau,\cdot))(s)|d\tau.
			\end{align*}
			By H\"{o}lder's inequality we obtain
			\begin{align*}
				\sup_{\alpha,t}\frac{|\alpha|^{\varepsilon'}}{t^{\varepsilon'}}\int_0^t |\Delta_\alpha \Lambda X(\tau,\cdot)(s)|d\tau\lesssim \sup_{\alpha}\frac{\|\delta_\alpha \Lambda X\|_{L_T^\frac{1}{1-\varepsilon'}L^\infty}}{|\alpha|^{1-\varepsilon'}}\lesssim
				\|X'\|_{\mathcal{G}_T}.
			\end{align*}
			Moreover, Proposition \ref{propdeltaN} implies
			\begin{align*}
				\sup_{\alpha,t}\frac{|\alpha|^{\varepsilon'}}{t^{\varepsilon'}}\int_0^t|\Delta_\alpha N(X(\tau,\cdot))(s)|d\tau&\lesssim\sup_{\alpha}\frac{\|\delta_\alpha N\|_{L_T^\frac{1}{1-\varepsilon'}L^\infty}}{|\alpha|^{1-\varepsilon'}}\lesssim\left(1+\kappa(T)\right)^2\|X'\|_{\mathcal{G}_T}^2(1+\|X'\|_{\mathcal{G}_T}).
			\end{align*}
			Hence one obtain 
			\begin{align*}
				Q(T)\lesssim (1+\kappa(T))^4\|X'\|_{\mathcal{G}_T}(1+\|X'\|_{\mathcal{G}_T})^2.
			\end{align*}
			This completes the proof.
		\end{proof}
		
		\subsection{Smoothing effect}
		Let $X\in \mathcal{G}_T^1$ be a solution to \eqref{eqpeskin}, $(Y,Z)\in \mathcal{G}_T^1\times C^2_{t,x}$ be a solution to \eqref{eqglo}. We prove that for any  $t\in(0,1)$ and $\gamma\in[10\varepsilon',\theta-10\varepsilon']$, there holds
		\begin{align}\label{smoresultX}
			&\|X(t)\|_{\dot C^{1+\gamma}}\lesssim t^{-\gamma}\left( \|X_0'\|_{\tilde {\mathcal{G}}_t}+(1+\kappa(t))^2 \|X'\|_{\mathcal{G}_t}^2(1+\|X'\|_{\mathcal{G}_t})^2\right),\\
			&\|Y(t)\|_{\dot C^{1+\gamma}}\lesssim t^{-\gamma}\Big\{\big(1+ \|Z'\|_{L^\infty_tL^\infty}+\|Y'\|_{\mathcal{G}_t}\big)^{5}\|Y'\|_{\mathcal{G}_t}^2\label{smoresultY}\\
			&\quad\quad\quad\quad\quad\quad\quad\quad\times(1+\kappa(t)+\sup_{\tau\in[0,t]}(\|Z'(\tau)\|_{L^\infty}^{-1}))^{5}+\|Y_0'\|_{\tilde {\mathcal{G}}_t}\Big\}.\nonumber
		\end{align}
		Denote $X'(t,s)=\partial_s X(t,s)$. We have the formula
		\begin{align*}
			X'(t,s)&=\int K(t,s-y)X_0'(y)dy+\int_0^t\int \mathcal{H}\Lambda^{1-\theta}K(t-\tau,s-y) \Lambda^{\theta} N (\tau,y)dyd\tau\\
			&:=X'_L(t,s)+X'_N(t,s).
		\end{align*}
		We write 
		$$
		X'_L(t,s)=\int K(t-\tau,s-y)X_L'(\tau,y)dy, \ \ \tau\in(0,\frac{t}{2}).
		$$
		By $\|K(t-\tau,\cdot)\|_{L^1}=1$, we obtain 
		$$
		\|\delta_\alpha X'_L(t,\cdot)\|_{L^\infty}\lesssim \|\delta_\alpha X_L'(\tau,\cdot)\|_{L^\infty}.
		$$
		Take $L^{\frac{1}{\gamma}}$ for $\tau\in(0,\frac{t}{2})$ we obtain 
		$$
		t^{\gamma}\|\delta_\alpha X'_L(t,\cdot)\|_{L^\infty}\lesssim \|\delta_\alpha X_L'\|_{L^{1/\gamma}_{t/2}L^\infty},
		$$
		which leads to 
		$$
		t^{\gamma}\|X'_L(t,\cdot)\|_{\dot C^\gamma}\lesssim \sup_\alpha\frac{\|\delta_\alpha X_L'\|_{L_{t/2}^{1/\gamma}L^\infty}}{|\alpha|^\gamma}\lesssim \|X_L'\|_{\mathcal{G}_{t}}\lesssim \|X_0'\|_{\tilde {\mathcal{G}_t}}.
		$$
		On the other hand, we have 
		$$
		\|X_N'(t)\|_{\dot C^\gamma}\overset{\eqref{fractionald}}\lesssim \int_0^t \frac{1}{(t-\tau)^{\gamma+1-\theta}}\|\Lambda^\theta N(\tau)\|_{L^\infty}d\tau.
		$$
		Denote $\tilde \gamma=\gamma+1-\theta$, we have 
		\begin{align*}
			t^{\tilde \gamma}\|X(t)\|_{\dot C^\gamma}\lesssim t^{1-\theta}\|X_0'\|_{\tilde {\mathcal{G}_t}}+\int_0^t\frac{\tau^{\tilde \gamma}+(t-\tau)^{\tilde \gamma}}{(t-\tau)^{\tilde \gamma} }\int\|\delta_\beta N (\tau,\cdot)\|_{L^\infty}\frac{d\tau d\beta}{|\beta|^{1+\theta}}.
		\end{align*}
		Combining Lemma \ref{lempropsmoothe} with Proposition \ref{propdeltaN}  we obtain 
	$$\int_0^t\frac{\tau^{\tilde \gamma}+(t-\tau)^{\tilde \gamma}}{(t-\tau)^{\tilde \gamma} }\int\|\delta_\beta N (\tau,\cdot)\|_{L^\infty}\frac{d\beta d\tau}{|\beta|^{1+\theta}}\lesssim t^{1-\theta} \left(1+\kappa(t)\right)^2\|X'\|_{\mathcal{G}_t}^2(1+\|X'\|_{\mathcal{G}_t})^2.$$
		Then one obtains $$
		t^{\tilde \gamma}\|X(t)\|_{\dot C^\gamma }\lesssim t^{1-\theta}\left(\|X_0'\|_{\tilde {\mathcal{G}_t}}+(1+\kappa(t))^2 \|X'\|_{\mathcal{G}_t}^2(1+\|X'\|_{\mathcal{G}_t})^2\right),
		$$
		which yields \eqref{smoresultX}. 
		Similarly, recalling \eqref{formulaY} we have the formula 
		\begin{align*}
			Y'(t,s)&=\int K(t,s-y)\tilde {\mathcal{O}}(s-y)Y_0'(y)dy\\
			&\quad\quad+\int_0^t\int \mathcal{H}\Lambda^{1-\theta}K(t-\tau,s-y)\tilde {\mathcal{O}}(s-y) \Lambda^{\theta} (\Pi\mathfrak{N}(X(\tau,y))) dyd\tau.
		\end{align*}
		Following above estimates, 
		Lemma \ref{lempropsmoothe} and Proposition \ref{propparabolicY} yield \eqref{smoresultY}.
		\subsection{Higher regularity}
		In the following lemma, we suppress the time variable. 
		\begin{lemma}\label{lemhigh}
			Suppose $\kappa(X)\leq C_*$, then	 we have for any $m\in\mathbb{Z}^+$\\
			\textbf{1)} If $X'\in {\dot C^{m-\frac{1}{4}}}$, then 
			$N\in\dot C^{m+\frac{1}{2}}$. In particular,		\begin{align}
				&\|\partial_s^mN\|_{\dot C^\frac{1}{2}}\lesssim \|X'\|_{\dot C^{\frac{1}{4}}}^{4m+2}+\|X'\|_{\dot C^{m-\frac{1}{4}}}^\frac{4m+2}{4m-1}, \ \ \  \label{firh}
			\end{align}
			\textbf{2)} If $X'\in{\dot C^{m}}$, then 
			$N\in{\dot C^{m+\frac{7}{8}}}$. In particular,					
			\begin{align*}
				&\|\partial_s^mN\|_{\dot C^\frac{7}{8}}\lesssim \|X'\|_{\dot C^{\frac{1}{8}}}^{8m+7}+\|X'\|_{\dot C^{m}}^\frac{8m+7}{8m}, \ \ \  
			\end{align*}
			where the implicit constants only depend on $C_*$ and $m$.
		\end{lemma}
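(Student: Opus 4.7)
My plan is to expand $\partial_s^m N$ via the Leibniz / Faà di Bruno rule and bound each resulting term using the elementary Hölder estimates of Lemma 2.1 and the well-stretched bound $\kappa(X)\leq C_*$, then apply a Gagliardo-Nirenberg-type interpolation to convert the intermediate Hölder seminorms into combinations of $\|X'\|_{\dot C^{1/4}}$ and $\|X'\|_{\dot C^{m-1/4}}$. I carry out Part 1 in detail; Part 2 is identical after replacing $1/4$ by $1/8$ and $1/2$ by $7/8$.

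Recall $N(X)(s)=\sum\int H(\tilde\Delta_\alpha X(s))\,E^\alpha X_i(s)\,\delta_\alpha X_j'(s)\,\frac{d\alpha}{\alpha}$. Applying $\partial_s^m$ and distributing across the three factors, I obtain a finite sum of integrals of the schematic form
$$
\int (\partial^r H)(\tilde\Delta_\alpha X(s))\cdot\prod_{i}\tilde\Delta_\alpha(\partial_s^{k_i}X)(s)\cdot E^\alpha(\partial_s^{\ell}X_p)(s)\cdot\delta_\alpha(\partial_s^{j+1}X_q)(s)\,\frac{d\alpha}{\alpha},
$$
with $r+\sum k_i+\ell+j=m$. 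Higher derivatives $\partial^r H$ are rational functions of the form $(\text{polynomial in }\tilde\Delta_\alpha X)/|\tilde\Delta_\alpha X|^{2r+3}$, so under $\kappa\leq C_*$ they are bounded by a constant depending only on $C_*$ and $r$. Each operator $\tilde\Delta_\alpha,\delta_\alpha, E^\alpha$ supplies a factor $|\alpha|^\gamma$ when paired with $\|\cdot\|_{\dot C^\gamma}$ of the underlying function (Lemma 2.1), so the $d\alpha/\alpha$ singularity is absorbed whenever the factors are paired with strictly positive Hölder exponents summing to at least $1$.

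To bound $\|\partial_s^m N\|_{\dot C^{1/2}}$, I form the difference $\delta_\beta(\partial_s^m N)(s)$ and follow the splitting-and-distribution scheme already used in Proposition 4.1: the $\beta$-increment falls on one of the factors at a time, after which the $\alpha$-integral is split into $|\alpha|\leq|\beta|$ and $|\alpha|>|\beta|$. In each region, the integrand is bounded by a product $\prod_i|\alpha|^{\gamma_i n_i}\|X'\|_{\dot C^{\sigma_i}}^{n_i}$ with the $\gamma_i$'s chosen so that the $\alpha$-integral converges and the combined $\alpha,\beta$-dependence produces exactly the factor $|\beta|^{1/2}$. This reduces the estimate to controlling a finite sum of products $\prod_i\|X'\|_{\dot C^{\sigma_i}}^{n_i}$ with $0\leq\sigma_i\leq m-1/4$ and with the total weighted order $\sum n_i\sigma_i$ fixed by scaling.

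The final step is the Gagliardo–Nirenberg interpolation
$$
\|X'\|_{\dot C^{\sigma}}\lesssim\|X'\|_{\dot C^{1/4}}^{\theta}\,\|X'\|_{\dot C^{m-1/4}}^{1-\theta},\qquad \theta=\frac{m-1/4-\sigma}{m-1/2},
$$
valid for $1/4\leq\sigma\leq m-1/4$. Applying it to each factor in each of the finitely many products and using Young's inequality $ab\leq a^p/p+b^{p'}/p'$ to concentrate the weight on one side at a time, every product collapses to either $\|X'\|_{\dot C^{1/4}}^{4m+2}$ or $\|X'\|_{\dot C^{m-1/4}}^{(4m+2)/(4m-1)}$, which is exactly the claimed bound \eqref{firh}. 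The main obstacle will be the combinatorial bookkeeping: tracking the large number of Leibniz summands, preserving the $\alpha=0$ cancellation in the cases where many derivatives collide on the innermost factor $\delta_\alpha X'$ (so the remaining factors must still supply enough powers of $|\alpha|$), and choosing the interpolation exponents consistently across all summands so that the bound collapses cleanly to the two stated limiting norms rather than a messy sum of intermediate seminorms.
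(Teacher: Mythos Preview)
Your overall strategy—Leibniz expansion, pointwise H\"older bounds via Lemma~\ref{holder}, splitting the $\alpha$-integral, then interpolation and Young's inequality—matches the paper's argument for the \emph{lower-order} summands (what the paper calls $S_1$ and $R_0$). However, there is a genuine gap for the \emph{highest-order} terms, i.e.\ when all $m$ derivatives in $\partial_s^m$ land on $E^\alpha X_i$ (the paper's $S_2$) or on $\delta_\alpha X_j'$ (the paper's $S_3$). In those cases the $\delta_\beta$-increment sits on $E^\alpha(\partial_s^m X)$ or on $\partial_s^m X'(\cdot-\alpha)$, and extracting the factor $|\beta|^{1/2}$ pointwise from either would require $X'\in\dot C^{m+1/2}$, not merely $X'\in\dot C^{m-1/4}$. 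No choice of split point in $\alpha$ rescues this: even on the region $|\alpha|\leq|\beta|$ the integrand still carries $\|\partial_s^m X'\|_{L^\infty}$, which is not controlled by the hypothesis. Your remark that ``the remaining factors must still supply enough powers of $|\alpha|$'' does not help, because those remaining factors are $H(\tilde\Delta_\alpha X)$ and $\delta_\alpha X_j'$, which together contribute at most $|\alpha|^{\gamma}$ for $\gamma<1$; this is not enough to absorb both the $d\alpha/\alpha$ singularity and the missing $3/4$-derivative on the top-order factor.

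The paper closes these terms by first rewriting via the identity
\[
E^\alpha g=\alpha\,\partial_\alpha(\Delta_\alpha g)+\delta_\alpha g\Bigl(\tfrac{1}{\alpha}-\tfrac{1}{\tilde\alpha}\Bigr),\qquad
g'(s-\alpha)=\alpha\,\partial_\alpha(\Delta_\alpha g)(s)+\Delta_\alpha g(s),
\]
which turns the main part of $S_2$ and $S_3$ into integrals of the form $\int f_s(\alpha)\,\partial_\alpha\bigl(\Delta_\alpha g\bigr)(s)\,d\alpha$ with $g=\delta_\beta\partial_s^m X$. These are then handled not by a pointwise bound but by the bilinear estimate Lemma~\ref{lemI12}, which is effectively an integration by parts in $\alpha$ shifting roughly one derivative from $g$ onto the low-order factor $f_s$; only then does the estimate close using $\|X'\|_{\dot C^{m-1/4}}$. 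This mechanism is absent from your proposal. A secondary point: the paper does not split at $|\alpha|=|\beta|$ but at a free parameter $\lambda$ chosen as a ratio of norms and optimized afterward—this is what produces the clean two-endpoint form $\|X'\|_{\dot C^{1/4}}^{4m+2}+\|X'\|_{\dot C^{m-1/4}}^{(4m+2)/(4m-1)}$ directly, before any interpolation.
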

		\begin{proof} We note that the proof is essentially an analogy of the proof of Proposition \ref{propdeltaN}. The main difference is that we ignore the time variable in this lemma.  
			
			For simplicity, we only prove \textbf{1)}. The second one can be done similarly. Recall that 
			\begin{equation*}
				N(X(s))=\sum\int H(\tilde\Delta_\alpha X(s)) E^\alpha X_i(s)  \delta_\alpha X_j'(s)\frac{d\alpha}{\alpha}.
			\end{equation*}
			We have 
			\begin{align*}
				\delta_\beta\partial_s^mN(X(s))=&\sum\int \delta_\beta\partial_s^m(H(\tilde\Delta_\alpha X(s))) E^\alpha X_i(s)  \delta_\alpha X_j'(s)\frac{d\alpha}{\alpha}\\
				&+\sum\int H(\tilde\Delta_\alpha X(s)) \delta_\beta E^\alpha \partial_s^mX_i(s)  \delta_\alpha X_j'(s)\frac{d\alpha}{\alpha}\\
				&+\sum\int H(\tilde\Delta_\alpha X(s)) E^\alpha X_i(s)  \delta_\beta \partial_s^mX_j'(s-\alpha)\frac{d\alpha}{\alpha}+R_0\\
				&:=S_1+S_2+S_3+R_0,
			\end{align*}
			where we denote $R_0$ the lower order remainder terms. For the first term, we have 
			\begin{align*}
				&|\delta_\beta\partial_s^m(H(\tilde\Delta_\alpha X(s)))|\lesssim \sum_{k=1}^m(|\tilde \Delta_\alpha\partial_s X|^{m-k}|\delta_\beta\tilde \Delta_\alpha\partial_s^k X|)\\
				&\lesssim |\beta|^\frac{1}{2}|\alpha|^{-\frac{1}{2}}(\min\{\|X'\|_{\dot C^{1-\frac{1}{4m}}}|\alpha|^{-\frac{1}{4m}},\|X'\|_{\dot C^{1-\frac{3}{4m}}}|\alpha|^{-\frac{3}{4m}}\}^m+\|X'\|_{\dot C^{m-\frac{1}{4}}}|\alpha|^{-\frac{1}{4}}),
			\end{align*} 
			and \begin{align*}
				|E^\alpha X_i(s)  \delta_\alpha X_j'(s)|\lesssim\min\{ \|X'\|_{\dot C^\frac{1}{2}}^2|\alpha|,\|X'\|_{\dot C^\frac{1}{4}}^2|\alpha|^\frac{1}{2}\}.
			\end{align*}
			Hence 
			\begin{align*}
				|S_1|
				&\lesssim|\beta|^\frac{1}{2} \int_{|\alpha|\leq \lambda} (\|X'\|_{\dot C^{1-\frac{1}{4m}}}^m+\|X'\|_{\dot C^{m-\frac{1}{4}}}) \frac{d\alpha}{|\alpha|^\frac{3}{4}}\|X'\|_{\dot C^\frac{1}{2}}^2\\
				&\quad\quad+|\beta|^\frac{1}{2}\int_{|\alpha|\geq \lambda}(\|X'\|_{\dot C^{1-\frac{3}{4m}}}^m+\|X'\|_{\dot C^{m-\frac{3}{4}}})\frac{d\alpha}{|\alpha|^\frac{7}{4}}\|X'\|_{\dot C^\frac{1}{4}}^2\\
				&\lesssim |\beta|^\frac{1}{2}\lambda^\frac{1}{4}(\|X'\|_{\dot C^{1-\frac{1}{4m}}}^m+\|X'\|_{\dot C^{m-\frac{1}{4}}})\|X'\|_{\dot C^\frac{1}{2}}^2\\
				&\quad\quad\quad+|\beta|^\frac{1}{2}\lambda^{-\frac{3}{4}}(\|X'\|_{\dot C^{1-\frac{3}{4m}}}^m+\|X'\|_{\dot C^{m-\frac{3}{4}}})\|X'\|_{\dot C^\frac{1}{4}}^2.
			\end{align*}
			Taking $\lambda=[(\|X'\|_{\dot C^{1-\frac{1}{4m}}}^m+\|X'\|_{\dot C^{m-\frac{1}{4}}})\|X'\|_{\dot C^\frac{1}{2}}^2]^{-1}(\|X'\|_{\dot C^{1-\frac{3}{4m}}}^m+\|X'\|_{\dot C^{m-\frac{3}{4}}})\|X'\|_{\dot C^\frac{1}{4}}^2$, and applying interpolation inequality and Young's inequality, we obtain 
			\begin{align*}
				\|S_1\|_{L^\infty}\lesssim |\beta|^\frac{1}{2}(\|X'\|_{\dot C^{\frac{1}{4}}}^{4m+2}+\|X'\|_{\dot C^{m-\frac{1}{4}}}^\frac{4m+2}{4m-1}).
			\end{align*}
			Then we deal with $S_2$, observe that 
			\begin{align*}
				\delta_\beta E^\alpha\partial_s^m X(s)=\alpha \partial_\alpha(\Delta_\alpha \delta_\beta \partial_s^m X(s))+\delta_\alpha\delta_\beta \partial_s^m X(s)\left(\frac{1}{\alpha}-\frac{1}{\tilde \alpha}\right).
			\end{align*}
			Hence we have
			\begin{align*}
				S_2=&\sum\int H(\tilde\Delta_\alpha X(s)) \partial_\alpha(\Delta_\alpha \delta_\beta \partial_s^m X_i(s)) \delta_\alpha X_j'(s)d\alpha\\
				&+\sum\int H(\tilde\Delta_\alpha X(s)) \delta_\alpha\delta_\beta \partial_s^m X_i(s)\left(\frac{1}{\alpha}-\frac{1}{\tilde \alpha}\right) \delta_\alpha X_j'(s)\frac{d\alpha}{\alpha}\\
				:=&S_{2,1}+S_{2,2}.
			\end{align*}
			Applying Lemma \ref{lemI12} with $f_s(\alpha)=H(\tilde\Delta_\alpha X(s))\delta_\alpha X_j'(s)$, $g(s)=\delta_\beta \partial_s^m X_i(s)$, and $\sigma=\frac{4}{5}$, we have 
			\begin{align*}
				\|S_{2,1}\|_{L^\infty}&\lesssim \|X'\|_{\dot C^{\frac{4}{5}+\varepsilon'}}^\frac{1}{2}\|X'\|_{\dot C^{\frac{4}{5}-\varepsilon'}}^\frac{1}{2}\|\delta_\beta \partial_s^m X_i(s)\|_{\dot C^{\frac{1}{5}+\varepsilon'}}^\frac{1}{2}\|\delta_\beta \partial_s^m X_i(s)\|_{\dot C^{\frac{1}{5}-\varepsilon'}}^\frac{1}{2}\\
				&\lesssim |\beta|^\frac{1}{2}\|X'\|_{\dot C^{\frac{4}{5}+\varepsilon'}}^\frac{1}{2}\|X'\|_{\dot C^{\frac{4}{5}-\varepsilon'}}^\frac{1}{2}\| \partial_s^m X_i(s)\|_{\dot C^{\frac{7}{10}+\varepsilon'}}^\frac{1}{2}\| \partial_s^m X_i(s)\|_{\dot C^{\frac{7}{10}-\varepsilon'}}^\frac{1}{2}.
			\end{align*}
			By interpolation inequality and Young's inequality we obtain 
			\begin{align*}
				\|S_{2,1}\|_{L^\infty}\lesssim |\beta|^\frac{1}{2}(\|X' \|_{\dot C^{\frac{1}{4}}}^{4m+2}+\|X'\|_{\dot C^{m-\frac{1}{4}}}^\frac{4m+2}{4m-1}).
			\end{align*}
			For $S_{2,2}$. Recall \eqref{z2}, one has 
			\begin{align*}
				|S_{2,2}|&\lesssim \int \left|H(\tilde\Delta_\alpha X(s)) \delta_\alpha\delta_\beta \partial_s^m X_i(s) \delta_\alpha X_j'(s)\right|\frac{d\alpha}{|\alpha|^2}\\
				&\lesssim |\beta|^\frac{1}{2}\left(\int_{|\alpha|\leq \lambda} \|X'\|_{\dot C^{m-\frac{1}{4}}}\|X'\|_{\dot C^1}\frac{d\alpha}{|\alpha|^\frac{3}{4}}+\int_{|\alpha|\geq \lambda} \|X'\|_{\dot C^{m-\frac{1}{2}}}\|X'\|_{\dot C^\frac{1}{2}}\frac{d\alpha}{|\alpha|^\frac{3}{2}}\right)\\
				&\lesssim |\beta|^\frac{1}{2}\left(\lambda^\frac{1}{4}\|X'\|_{\dot C^{m-\frac{1}{4}}}\|X'\|_{\dot C^1}+\lambda^{-\frac{1}{2}} \|X'\|_{\dot C^{m-\frac{1}{2}}}\|X'\|_{\dot C^\frac{1}{2}}\right).
			\end{align*}
			Let $\lambda =\left(\|X'\|_{\dot C^{m-\frac{1}{2}}}\|X'\|_{\dot C^\frac{1}{2}}\right)^\frac{4}{3}\left(\|X'\|_{\dot C^{m-\frac{1}{4}}}\|X'\|_{\dot C^1}\right)^{-\frac{4}{3}}$. Applying interpolation inequality and Young's inequality again we obtain 
			\begin{align*}
				\|S_{2,2}\|_{L^\infty}\lesssim |\beta|^\frac{1}{2}(\|X'\|_{\dot C^{\frac{1}{4}}}^{4m+2}+\|X'\|_{\dot C^{m-\frac{1}{4}}}^\frac{4m+2}{4m-1}).
			\end{align*}
			Finally, observe that 
			\begin{align*}
				\delta_\beta \partial_s^mX_j'(s-\alpha)=\alpha \partial_\alpha (\Delta_\alpha 	\delta_\beta \partial_s^mX_j')+\Delta_\alpha \delta_\beta \partial_s^mX_j'.
			\end{align*}
			We can estimate $S_3$ and $R_0$ similarly as we did for $S_2$.  We conclude that 
			\begin{align*}
				\|\delta_\beta\partial_s^mN(X(\cdot))\|_{L^\infty}\lesssim |\beta|^\frac{1}{2}(\|X'\|_{\dot C^{\frac{1}{4}}}^{4m+2}+\|X'\|_{\dot C^{m-\frac{1}{4}}}^\frac{4m+2}{4m-1}),
			\end{align*}
			which leads to \eqref{firh}.
		\end{proof}\vspace{0.3cm}\\	
		In the following, we prove that for any $m\in \mathbb{Z}^+$,
		\begin{align}
			&t^{m-\frac{1}{4}}\|X'(t)\|_{\dot C^{m-\frac{1}{4}}}\lesssim 1 \quad\Rightarrow\quad\quad	t^{m}\|X'(t)\|_{\dot C^m}\lesssim 1,\label{step1}\\
			&t^{m}\|X'(t)\|_{\dot C^m}\lesssim 1 \quad\quad\quad\ \Rightarrow\quad\quad 	t^{m+\frac{3}{4}}\|X'(t)\|_{\dot C^{m+\frac{3}{4}}}\lesssim 1.\label{step2}
		\end{align}
		We first prove \eqref{step1}.
		We have the formula
		\begin{align*}
			\partial_s^m	X'(t,s)&=\int \partial_sK(t/2,s-y) \partial_s ^{m}X(t/2,y)dy+\int_{t/2}^t\int \partial_sK(t-\tilde \tau,s-y)\partial_s^mN(\tilde \tau,y)dyd\tilde \tau\\
			&:=(\partial_s^m	X')_L(t,s)+(\partial_s^m	X')_N(t,s).
		\end{align*}
		For the linear part, we have 
		\begin{align*}
			\left\|(\partial_s^m	X')_L(t,\cdot)\right\|_{L^\infty}&\leq\sup_s\int\left| \partial_sK(t/2,s-y) (\partial_s^m	X(t/2,y)-\partial_s^m	X(t/2,s))\right|dy\\
			&\lesssim \int |\partial_sK(t/2,y)||y|^\frac{3}{4}dy\|X'(t/2)\|_{\dot C^{m-\frac{1}{4}}}\lesssim t^{-m},
		\end{align*}
		where in the last inequality we used the fact that 
		\begin{align}\label{kkk}
			\int |\partial_sK(t,y)||y|^{\sigma}dy\lesssim t^{-{(1-\sigma)}}, \ \ \ \forall \sigma\in[0,1).
		\end{align}
		For the nonlinear part, we have 
		\begin{align*}
			\left\|(\partial_s^m	X')_N(t,\cdot)\right\|_{L^\infty}&\leq \sup_s \int_{t/2}^t\int\left| \partial_sK(t-\tilde \tau,s-y)(\partial_s^mN(\tilde \tau,y)-\partial_s^mN(\tilde \tau,s))\right|dyd\tilde \tau\\
			&\lesssim\int_{t/2}^t \int |\partial_sK(t-\tilde \tau,y)|y|^\frac{1}{2}dy \|\partial_s^m N(\tilde \tau)\|_{\dot C^\frac{1}{2}}d\tilde \tau.
		\end{align*}
		By Lemma \ref{lemhigh} we have 
		\begin{align*}
			\|\partial_s^m N(\tilde \tau)\|_{\dot C^\frac{1}{2}}\lesssim \|X'\|_{\dot C^{\frac{1}{4}}}^{4m+2}+\|X'\|_{\dot C^{m-\frac{1}{4}}}^\frac{4m+2}{4m-1}\lesssim t^{-m-\frac{1}{2}}.
		\end{align*}
		Combining this with \eqref{kkk}, we obtain 
		\begin{align*}
			\left\|(\partial_s^m	X')_N(t,\cdot)\right\|_{L^\infty}&	\lesssim t^{-(m+\frac{1}{2})}\int_{t/2}^t (t-\tilde \tau)^{-\frac{1}{2}}d\tau \lesssim t^{-m}.
		\end{align*}
		Hence we obtain
		\begin{align*}
			t^m\|\partial_s^m	X'(t)\|_{L^\infty}\lesssim 1.
		\end{align*}
		Then we prove \eqref{step2}. Note that 
		\begin{align*}
			\left\|(\partial_s^m	X')_L(t,\cdot)\right\|_{\dot C^\frac{3}{4}}&\leq\sup_s\int\left| \Lambda^\frac{3}{4}K(t/2,s-y) \partial_s^m	X'(t/2,y)\right|dy\\
			&\lesssim \int |\Lambda^\frac{3}{4}K(t/2,y)|dy\|\partial_s^mX'(t/2)\|_{L^\infty}\overset{\eqref{fractionald}}\lesssim t^{-m-\frac{3}{4}}.
		\end{align*}
		Moreover, we have
		\begin{align*}
			|\delta_\alpha(\partial_s^m	X')_N(t,s)|&\leq  \int_{t/2}^t\int\left| \partial_sK(t-\tilde \tau,s-y)(\delta_\alpha\partial_s^mN(\tilde \tau,y)-\delta_\alpha\partial_s^mN(\tilde \tau,s))\right|dyd\tilde \tau\\
			&\lesssim|\alpha|^\frac{1}{2}\int_{t/2}^t \int |\partial_sK(t-\tilde \tau,y)|y|^\frac{3}{4}dy \|\partial_s^m N(\tilde \tau)\|_{\dot C^\frac{7}{8}}d\tilde \tau\\
			&\lesssim |\alpha|^\frac{1}{2}t^{-(m+\frac{7}{8})}\int_{t/2}^t (t-\tilde \tau)^{-\frac{1}{4}}d\tau \lesssim|\alpha|^\frac{1}{2} t^{-m-\frac{3}{4}},
		\end{align*}
		which implies 
		$$
		\left\|(\partial_s^m	X')_N(t,\cdot)\right\|_{\dot C^\frac{3}{4}}\lesssim t^{-m-\frac{3}{4}}.
		$$
		This completes the proof of \eqref{step2}.
		
		Combining \eqref{step1}, \eqref{step2} with \eqref{smoresultX}, we obtain the following result
		\begin{lemma}\label{lemhighre}
			Let $X\in\mathcal{G}_T^1$ be a solution to \eqref{eqpeskin} with initial data $X_0\in\tilde{\mathcal{G}}_T^1$. Then  there holds
			\begin{align*}
				t^k \|X'(t)\|_{\dot C^k}\lesssim 1, \quad\quad\quad \forall k\in\mathbb{Z}^+, \ t\in[0,T].
			\end{align*}
		\end{lemma}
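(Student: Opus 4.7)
The plan is to deduce the lemma by combining the smoothing estimate \eqref{smoresultX} (which provides control on $\dot C^{1+\gamma}$ for $\gamma$ strictly less than $\theta$) with the two bootstrap implications \eqref{step1} and \eqref{step2} that were just established. The hypothesis $X\in \mathcal{G}_T^1$ together with $X_0\in\tilde{\mathcal{G}}_T^1$ (and the implicit $\kappa(T)<\infty$ coming from the fact that $X$ is a genuine solution of \eqref{eqpeskin}) make the right-hand side of \eqref{smoresultX} finite, so there is a fixed constant $M=M(\kappa(T),\|X'\|_{\mathcal{G}_T},\|X_0'\|_{\tilde{\mathcal{G}}_T})$ controlling all the subsequent estimates.

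First I would pick $\gamma=\tfrac{3}{4}$, which lies in $[10\varepsilon',\theta-10\varepsilon']$ because $\theta$ is chosen so close to $1$. Applying \eqref{smoresultX} with this $\gamma$ yields
\[
t^{3/4}\,\|X'(t)\|_{\dot C^{3/4}}\lesssim M,\qquad t\in(0,T].
\]
This is the base case of the induction: it is precisely the hypothesis of \eqref{step1} with $m=1$, since $m-\tfrac14=\tfrac34$. Invoking \eqref{step1} therefore gives $t\,\|X'(t)\|_{\dot C^1}\lesssim 1$.

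Next I would iterate. Suppose by induction that for some $m\in\mathbb{Z}^+$ one has $t^{m}\|X'(t)\|_{\dot C^{m}}\lesssim 1$. Applying \eqref{step2} promotes this to $t^{m+3/4}\|X'(t)\|_{\dot C^{m+3/4}}\lesssim 1$. Since $m+\tfrac34=(m+1)-\tfrac14$, this is exactly the premise of \eqref{step1} at level $m+1$, which yields $t^{m+1}\|X'(t)\|_{\dot C^{m+1}}\lesssim 1$. Iterating this two-step cycle gives the desired conclusion for every positive integer $k$, with an implicit constant depending on $k$ but independent of $t\in(0,T]$.

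I do not expect a genuine obstacle in this argument: the two bootstrap implications \eqref{step1} and \eqref{step2} and the base smoothing estimate \eqref{smoresultX} do all the work, and the remainder is a clean induction. The only point that needs minor care is keeping track of how the constants depend on $\kappa(T)$, $\|X'\|_{\mathcal{G}_T}$, $\|X_0'\|_{\tilde{\mathcal{G}}_T}$ and on $k$; these grow in $k$ through the polynomial weights in Lemma \ref{lemhigh}, but since the statement only claims $t^k\|X'(t)\|_{\dot C^k}\lesssim 1$ with a $k$-dependent constant, no uniformity is required.
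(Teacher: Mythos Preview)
Your proposal is correct and matches the paper's own argument exactly: the lemma is obtained by combining the smoothing estimate \eqref{smoresultX} (taking $\gamma=\tfrac{3}{4}$ for the base case) with the two bootstrap implications \eqref{step1} and \eqref{step2}, and then iterating. The only thing the paper leaves implicit that you spell out is the alternating induction between \eqref{step2} and \eqref{step1}.
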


		\section{Proof of the main theorems}
		We first state the  existence theorems for smooth initial data (see \cite{LinTongSolvability2019,MoriWell2019}).
		\begin{theorem}\label{localexistence} Let $\gamma>1$.
			Consider $X_0\in C^{\gamma}$ satisfying $\kappa(X_0)\leq r_0$ for some constant $r_0$. Then there exists $T=T(\|X_0\|_{ C^{\gamma}},r_0)>0$  such that the problem \eqref{peskin} admits a solution $X\in C([0,T]; C^{\gamma-\varepsilon})$ for any   $\varepsilon\in (0,1)$ and $X(t)\in C^\infty$ for any $t\in (0,T]$.
		\end{theorem}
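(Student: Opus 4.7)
The plan is to run a parabolic fixed-point argument on the reformulated equation $\partial_t X + \tfrac{1}{4}\Lambda X = N(X)$ derived in \eqref{eqpeskin}. Setting $X^0 \equiv X_0$, I would iteratively define $X^{n+1}$ via Duhamel,
\begin{equation*}
X^{n+1}(t,s) = \int K(t, s-y) X_0(y)\,dy + \int_0^t \int K(t-\tau, s-y) N(X^n)(\tau, y)\,dy\,d\tau,
\end{equation*}
and propagate two a priori bounds on a time interval $[0, T]$ with $T = T(\|X_0\|_{C^\gamma}, r_0)$: a H\"older bound $\|X^n(t)\|_{C^\gamma} \leq 2\|X_0\|_{C^\gamma}$ and a well-stretched bound $\kappa(X^n(t)) \leq 2 r_0$. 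The first uses $\|K(t,\cdot)\|_{L^1}=1$ together with a H\"older estimate of $N$ in $C^{\gamma-1+\delta}$ whose norm is polynomial in $\|X\|_{C^\gamma}$ and $\kappa(X)$, obtained from the structure of $N$ and Lemma \ref{leH}. The second follows from
\begin{equation*}
\bigl| \delta_\alpha X^{n+1}(t,s) - \delta_\alpha X_0(s) \bigr| \leq |\alpha|\, t \sup_{\tau\in[0,t]} \|\partial_s X^{n+1}(\tau)\|_{L^\infty},
\end{equation*}
which, after dividing by $|\delta_\alpha X_0(s)|$ and taking $T$ small, yields $\kappa(X^{n+1}(t)) \leq 2 r_0$.

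Once these two a priori bounds are secured, I would apply Arzel\`a--Ascoli, using $\gamma > 1$ for the equicontinuity of $\partial_s X^n$, to extract a subsequential limit $X \in C([0,T]; C^{\gamma-\varepsilon})$, and verify it solves \eqref{peskin} in the mild (Duhamel) sense. Uniqueness in this class is immediate from Proposition \ref{Uniqueness}, since $C^\gamma$-regularity gives $X' \in \mathcal{G}_T$ for $\gamma > 1$. The smoothing conclusion $X(t) \in C^\infty$ for $t \in (0, T]$ then follows by bootstrapping: the kernel estimates \eqref{a1}--\eqref{fractionald} combined with the H\"older regularity of $N$ from Lemma \ref{lemhigh} upgrade $X(t)$ step by step to $\dot C^k$ for every $k \in \mathbb{Z}^+$, exactly as in the proof of Lemma \ref{lemhighre}.

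The main obstacle, and the reason this short-time statement is nontrivial, is the simultaneous control of $\|X^n\|_{C^\gamma}$ and $\kappa(X^n)$: because $N(X)$ contains reciprocals $1/|\tilde\Delta_\alpha X|^k$, any H\"older estimate on $N$ degenerates if $\kappa$ blows up, while the propagation of $\kappa$ in turn requires the H\"older norm to remain bounded. The polynomial dependence on $\kappa$ that is made explicit in Lemma \ref{leH} is precisely what lets the coupled Gr\"onwall-type inequalities close on a common time interval $T$ depending only on $\|X_0\|_{C^\gamma}$ and $r_0$. Alternatively, as in \cite{LinTongSolvability2019}, one could perform energy estimates at a higher regularity level $H^{5/2}$ and then recover $C^\gamma$ data by mollification $X_0 \ast \rho_\vartheta$ and passage to the limit $\vartheta \to 0$, using that $\kappa$ and the H\"older norm are lower semicontinuous under this approximation.
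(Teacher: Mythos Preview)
The paper does not prove Theorem~\ref{localexistence}: it is stated as a known result and cited directly from \cite{LinTongSolvability2019,MoriWell2019} (see the sentence immediately preceding the theorem). It is then used as a black box in the proofs of Theorems~\ref{thmlocal} and~\ref{thmglobal} to produce smooth approximate solutions from mollified data. So there is nothing to compare against; your outline is, in spirit, a compressed version of what those references do, in particular the $C^{1,\gamma}$ fixed-point scheme of Mori--Rodenberg--Spirn.

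That said, your sketch has one concrete error. The displayed inequality
\[
\bigl| \delta_\alpha X^{n+1}(t,s) - \delta_\alpha X_0(s) \bigr| \leq |\alpha|\, t \sup_{\tau\in[0,t]} \|\partial_s X^{n+1}(\tau)\|_{L^\infty}
\]
is false as written: the right-hand side does not control the time increment of $\delta_\alpha X^{n+1}$, since $\|\partial_s X^{n+1}(\tau)\|_{L^\infty}$ has no reason to vanish as $t\to 0$, and there is no mechanism producing the factor $t$. What you need is
\[
\bigl| \delta_\alpha X^{n+1}(t,s) - \delta_\alpha X_0(s) \bigr| \leq |\alpha|\,\|\partial_s X^{n+1}(t)-\partial_s X_0\|_{L^\infty},
\]
and then a separate argument that $\|\partial_s X^{n+1}(t)-\partial_s X_0\|_{L^\infty}\to 0$ as $t\to 0$, which follows from the Duhamel representation: the linear part $K(t)\ast X_0'-X_0'$ tends to zero because $X_0'\in C^{\gamma-1}$ is uniformly continuous, and the nonlinear part is $O(t^\theta)$ once you place $N(X^n)$ in $C^\theta$ for some $\theta>0$ (which is exactly the H\"older estimate you describe). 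Also, invoking Proposition~\ref{Uniqueness} for uniqueness requires the smallness $(1+\kappa)^2\|X'\|_{\mathcal{G}_T}\ll 1$, not merely $X'\in\mathcal{G}_T$; for $C^\gamma$ data this smallness does hold on a short enough interval by Lemma~\ref{propparabolic}, but you should say so explicitly rather than asserting that membership in $\mathcal{G}_T$ suffices.
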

		\begin{theorem}\label{globallemma} Let $\gamma>1$.
			There exists a constant $\rho_0>0$ such that, for any $X_0\in  C^{\gamma}$, if $\|\Pi X_0\|_{\dot C^{\gamma}}\leq \rho_0\|\mathcal{P}X_0\|_{\dot C^1}$, then the solution to the Peskin problem  \eqref{peskin} exists for all time and converges to a circle $Z_\infty\in \tilde{\mathcal{V}}$. More precisely, for any $0<\varepsilon<1$ and $t\geq 0$ there holds
			\begin{align*}
				\|\Pi X(t)\|_{C^{\gamma-\varepsilon}}\leq C\|\Pi X_0\|_{C^{\gamma-\varepsilon}}e^{-\frac{t}{4}},\quad \|X(t)-Z_\infty\|_{C^{\gamma-\varepsilon}}\leq C\|\Pi X_0\|_{C^{\gamma-\varepsilon}}e^{-\frac{t}{4}}.
			\end{align*}
		\end{theorem}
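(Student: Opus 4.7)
The plan is to combine the smooth local existence in Theorem \ref{localexistence} with an energy argument that exploits the spectral gap of the linearized operator $\mathcal{L}$ on the range of $\Pi$, together with the vanishing of $\mathfrak{N}$ and $\mathfrak{DN}$ at circles. Split $X(t)=Y(t)+Z(t)$ with $Y=\Pi X$ and $Z=\mathcal{P}X$, and work in the system \eqref{eqglo}
$$\partial_t Y+\mathcal{L}Y=\Pi\mathfrak{N}(Y+Z),\qquad \partial_t Z=\mathcal{P}\mathfrak{N}(Y+Z).$$
Since $Z(t)\in\mathcal{V}$ is controlled by $\|Z'\|_{L^\infty}$, I would from the outset view $Z(t)$ as a slowly-drifting reference circle, keeping $\|Z'(t)\|_{L^\infty}$ bounded away from $0$ and $\infty$ by the smallness hypothesis $\|\Pi X_0\|_{\dot C^\gamma}\leq \rho_0\|\mathcal{P}X_0\|_{\dot C^1}$.

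The key structural input is \eqref{haha1}--\eqref{haha2}: for any $W\in\tilde{\mathcal{V}}$ one has $\mathfrak{N}(W)=0$ and $\mathfrak{DN}[W]=0$. Taylor expanding around the instantaneous circle $Z(t)$ gives
$$\mathfrak{N}(Y+Z)=\int_0^1(1-r)\,\mathfrak{D}^2\mathfrak{N}[rY+Z](Y,Y)\,dr,$$
which is genuinely quadratic in $Y$, with coefficients controlled in $C^{\gamma-\varepsilon}$ by Lemma \ref{leH} and $(\kappa,\|Z'\|_{L^\infty}^{-1})$. Next I would nail down the spectrum of $\mathcal{L}$ restricted to $\mathrm{Range}(\Pi)$. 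The Fourier computation used to derive \eqref{kkkkkernel} shows that on each mode $n\in\mathbb{Z}\setminus\{0\}$ the eigenvalues of the $2\times 2$ symbol of $\mathcal{L}$ are $\tfrac{1}{4}(|n|\pm 1)$; since $\Pi$ removes the modes $|n|\le 1$, the smallest eigenvalue on $\mathrm{Range}(\Pi)$ is exactly $\tfrac{1}{4}$ (attained at $|n|=2$). This yields the semigroup bound
$$\|e^{-t\mathcal{L}}Y_0\|_{\dot C^{\gamma-\varepsilon}}\lesssim e^{-t/4}\|Y_0\|_{\dot C^{\gamma-\varepsilon}}$$
on $\mathrm{Range}(\Pi)$, together with the parabolic smoothing $\|e^{-t\mathcal{L}}Y_0\|_{\dot C^{\gamma}}\lesssim t^{-(\gamma-\gamma')}e^{-t/4}\|Y_0\|_{\dot C^{\gamma'}}$ for $\gamma'\le\gamma$.

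I would then run a bootstrap from Duhamel's formula
$$Y(t)=e^{-t\mathcal{L}}Y_0+\int_0^t e^{-(t-s)\mathcal{L}}\Pi\mathfrak{N}(Y+Z)(s)\,ds.$$
The quadratic estimate from \eqref{haha1}--\eqref{haha2} together with Lemma \ref{leH} gives
$$\|\Pi\mathfrak{N}(Y+Z)(s)\|_{\dot C^{\gamma-\varepsilon-1}}\lesssim\bigl(1+\kappa(s)+\|Z'\|_{L^\infty}^{-1}\bigr)^{C}\|Y(s)\|_{\dot C^{\gamma-\varepsilon}}^2,$$
and combining with the spectral-gap/smoothing bound produces
$$\|Y(t)\|_{\dot C^{\gamma-\varepsilon}}\le C e^{-t/4}\|Y_0\|_{\dot C^{\gamma-\varepsilon}}+C\int_0^t\frac{e^{-(t-s)/4}}{(t-s)^{1-\varepsilon}}\,\|Y(s)\|_{\dot C^{\gamma-\varepsilon}}^2\,ds.$$
Under the hypothesis $\|Y_0\|_{\dot C^\gamma}\le\rho_0\|Z_0'\|_{\dot C^1}$ with $\rho_0$ sufficiently small, a standard Gronwall/continuity argument closes the bootstrap and yields $\|Y(t)\|_{\dot C^{\gamma-\varepsilon}}\le 2Ce^{-t/4}\|Y_0\|_{\dot C^{\gamma-\varepsilon}}$. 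Because $\partial_t Z=\mathcal{P}\mathfrak{N}(Y+Z)$ is quadratic in $Y$, $\|\partial_t Z(t)\|_{\dot C^1}\lesssim e^{-t/2}\|Y_0\|_{\dot C^{\gamma-\varepsilon}}^2$, which is integrable; hence $Z(t)\to Z_\infty$ in $\dot C^1$ for some $Z_\infty\in\tilde{\mathcal{V}}$, with $\|Z(t)-Z_\infty\|_{\dot C^1}\lesssim e^{-t/2}\|Y_0\|_{\dot C^{\gamma-\varepsilon}}^2$. Writing $X(t)-Z_\infty=Y(t)+(Z(t)-Z_\infty)$ gives the second bound.

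The main obstacle is closing the bootstrap cleanly in $C^{\gamma-\varepsilon}$ with $\gamma>1$: the nonlinearity $\mathfrak{N}$ involves derivatives of $X$ (through $\delta_\alpha X'$ and $E^\alpha X$), so the integral $\int_0^t\frac{e^{-(t-s)/4}}{(t-s)^{1-\varepsilon}}\|Y\|_{\dot C^{\gamma-\varepsilon}}^2\,ds$ must genuinely absorb one derivative via the parabolic smoothing of $e^{-(t-s)\mathcal{L}}$, and one must simultaneously maintain $\kappa(X(t))\le 2\kappa(X_0)$ and $\|Z'(t)\|_{L^\infty}\sim\|Z_0'\|_{L^\infty}$ so that the constants in Lemma \ref{leH} do not blow up. The well-stretched propagation follows a posteriori from $\|Y(t)\|_{\dot C^{\gamma-\varepsilon}}$ remaining small and $Z(t)$ being close to a fixed circle, while persistence of $C^\gamma$ regularity and continuation to all of $[0,\infty)$ follow by concatenating Theorem \ref{localexistence} with the just-established a priori exponential decay.
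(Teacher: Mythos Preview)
The paper does not give its own proof of Theorem \ref{globallemma}: it is stated at the beginning of Section 5 alongside Theorem \ref{localexistence} as a known result from the literature, with the explicit attribution ``(see \cite{LinTongSolvability2019,MoriWell2019})'', and is then used as a black box in the proof of Theorem \ref{thmglobal}. So there is no ``paper's proof'' to compare against.

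That said, your sketch is essentially the argument carried out in those references. The spectral picture is correct: under the conjugation $v=\mathcal{O}_s\Pi w$ from \eqref{kkkkkernel} the operator $\mathcal{L}$ on $\mathrm{Range}(\Pi)$ becomes $\tfrac14\Lambda$ acting on functions with no zero mode, so the spectral gap is $\tfrac14$ and the semigroup bound $\|e^{-t\mathcal{L}}\Pi\|_{\dot C^{\gamma-\varepsilon}\to\dot C^{\gamma-\varepsilon}}\lesssim e^{-t/4}$ with smoothing follows. The quadratic vanishing of $\mathfrak{N}$ at circles via \eqref{haha1}--\eqref{haha2} is exactly the right structural input, and the Duhamel bootstrap you wrote is the standard way to close. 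Two small points worth tightening if you were to write this out in full: first, make sure the bootstrap simultaneously controls $\kappa(X(t))$ and $\|Z'(t)\|_{L^\infty}^{\pm 1}$ (you flag this yourself), since the constants in the quadratic estimate blow up otherwise; second, to conclude $Z_\infty\in\tilde{\mathcal{V}}$ rather than merely $Z_\infty\in\mathcal{V}$ you need $\|Z'_\infty\|_{L^\infty}>0$, which follows because $\|Z(t)-Z_0\|_{\dot C^1}$ stays of size $O(\|Y_0\|^2)\ll\|Z_0'\|_{L^\infty}$.
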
\vspace{0.3cm}
		\begin{proof}[Proof of Theorem \ref{thmlocal}] 
			Note that 	$\kappa_0=\liminf_{\vartheta\rightarrow 0}\kappa(X_0\ast\rho_\vartheta)<+\infty$, hence there exists a sequence $\{\vartheta_m\}_m$ such that $\lim_{m\rightarrow+\infty}\vartheta_m=0 $ and $\sup_m\kappa(X_0\ast\rho_{\vartheta_m})\leq 2\kappa_0$. 
			Denote $X_{0,m}=X_0\ast \rho_{\vartheta_m}$, where $\rho_\vartheta$ is the standard mollifier. By Theorem \ref{localexistence}, there exist $T_1=T_1(\|X_0\|_{L^\infty}, \kappa_0, \vartheta_m)>0$ and a solution $X_m\in C([0,T_1];\dot C^\frac{3}{2})$ with initial data $X_{0,m}$. Without loss of generality, let $T_2=\sup\left\{\tau:X_{m}\in C([0,\tau];\dot C^\frac{3}{2}) \right\}$. Let $T_0=\min\{T^*,T_2\}$. Denote  $\kappa_m(t)=\kappa_{X_{m}}(t)$. 
			By Lemma \ref{estMf22} and Proposition \ref{propdeltaN}, we have 
			for any $t\in[0,T_0]$
			\begin{align*}
				\|X_m'\|_{\mathcal{G}_t}\leq& C\left(1+\kappa_m(t)\right)^2 \|X_m'\|_{\mathcal{G}_t}^2\left(1+\|X_m'\|_{\mathcal{G}_t}\right)^2+\|X_{0,m}'\|_{\tilde {\mathcal{G}}_{T_0}}.
			\end{align*}
			Fix $0<\xi_0\leq10^{-10}(2+\tilde{C}+\kappa_0+\kappa_0^{-1})^{-10}$ for $\tilde{C}\geq C$.
			Denote
			\begin{align*}
				\tilde T_0=\sup\left\{t:t\leq T_0, \|X_m'\|_{\mathcal{G}_t}\leq 3\xi_0,\kappa_m(t)\leq 3\kappa_0\right\}.
			\end{align*}
			Note that $\|X_{0,m}'\|_{\tilde {\mathcal{G}}_{T_0}}\leq \frac{3}{2}\|X_0'\|_{\tilde {\mathcal{G}}_{T^*}}\leq \frac{3}{2}\xi_0$.	By continutity we have $\tilde T_0>0$.
			We want to prove that $	\tilde T_0=T_0$. By the standard bootstrap argument, it suffices to prove 
			\begin{align}\label{goal1}
				\|X_m'\|_{\mathcal{G}_{\tilde T_0}}	\leq 2\xi_0,~~ \kappa_m(\tilde T_0)\leq 2\kappa_0.
			\end{align}
			Then for any $t\in[0,\tilde T_0]$, we have 
			\begin{align*}
				\|X_m'\|_{\mathcal{G}_t}\leq&4C \left(1+3\kappa_0\right)^{2} \xi_0^2(1+2\xi_0)^2+\frac{3}{2}\xi_0\leq 2\xi_0.
			\end{align*}
			It remains to estimate $\kappa_m(t)$. We first obtain from Proposition \ref{propQT} that $t\in[0,\tilde T_0]$
			\begin{align*}
				Q_{X_m}(t)\leq C_1(1+\kappa_m(t))^4\xi_0\leq C_1(1+3\kappa_0)^4\xi_0.
			\end{align*}
			By \eqref{smoresultX} we obtain $t\in[0,\tilde T_0]$
			\begin{align*}
				\|X_m(t)\|_{\dot C^{\frac{3}{2}}}&\leq C_2 t^{-\frac{1}{2}}\left( \|X_0'\|_{\tilde {\mathcal{G}}_t}+(1+\kappa_m(t))^2 \|X_m'\|_{\mathcal{G}_t}^2(1+\|X_m'\|_{\mathcal{G}_t})^2\right)\\&\leq C_2 t^{-\frac{1}{2}}\left( \frac{3}{2}\xi_0+9(1+3\kappa_0)^2 \xi_0^2(1+3\xi_0)^2\right)\leq 2C_2\xi_0 t^{-\frac{1}{2}}.
			\end{align*} We fix $\tilde{C}\geq 10( C+C_1+C_2)$ in the defintion of $\xi_0$.
			Applying Lemma \ref{lemkappa} with $\varepsilon	=2(C_1+C_2)(1+3\kappa_0)^4\xi_0$. Then we obtain
			\begin{align*}
				\kappa_m(t)\leq 2\kappa_0,   \ \ \forall t\in[0,\tilde T_0],
			\end{align*}
			which completes the proof of \eqref{goal1}. Hence for any $t\in[0,\tilde T_0]$, there holds $\|X_m'\|_{\mathcal{G}_t}\leq 2\xi_0,~~ \kappa_m(t)\leq  2\kappa_0.$ Hence $T_0=\tilde T_0$.
			
			Next we prove that $T_0=T^*$. If this is not true, then $T_0=T_2$. Then we have $\|X'_m\|_{\mathcal{G}_{T_2}}	\leq 2\xi_0,\kappa_m(T_2)\leq 2\kappa_0$.
			Moreover, by the smoothing effect \eqref{smoresultX},  one has 
			$$
			\|X_m(T_2)\|_{\dot C^{\frac{3}{2}+\varepsilon'}}\lesssim T_2^{-\frac{1}{2}-\varepsilon'}.
			$$
			Then applying Theorem \ref{localexistence}, there exists $\delta>0$ such that $X_m\in C([0,T_2+\delta];\dot C^\frac{3}{2})$, which contradicts the definition of $T_2$. Hence $X_m\in C([0,T^*];\dot C^\frac{3}{2})$. Moreover, we have 
			\begin{equation*}
				||X_m||_{L^\infty_{t}L^\infty}\leq  C\left(1+\kappa_m(t)\right)^2 \|X_m'\|_{\mathcal{G}_t}^2\left(1+\|X_m'\|_{\mathcal{G}_t}\right)^2+C||X_0||_{L^\infty}.
			\end{equation*} Hence $||X_m||_{L^\infty_{T^*}L^\infty}\lesssim ||X_0||_{L^\infty}+1$.  By standard compactness argument, we are able to pass to the limit $m\rightarrow +\infty$ to get a solution $X\in C((0,T^*],\dot C^\frac{3}{2})$ of the Cauchy problem \eqref{peskin} with initial data $X_0$. The solution also satisfies the above estimates. Moreover, Lemma \ref{lemhighre} also implies that 	$$t^k \|X'(t)\|_{\dot C^k}\lesssim_k \xi_0, \quad\quad\quad \forall k\in\mathbb{Z}^+, \ t\in[0,T^*].$$ Then we complete the proof of the theorem.
		\end{proof}		\vspace{0.3cm}\\
		In the following, we give a proof of Theorem \ref{thmglobal}.	\vspace{0.3cm}\\
		\begin{proof}[Proof of Theorem \ref{thmglobal}] There exists a sequence $\{\vartheta_m\}_m$ such that $\lim_{m\rightarrow+\infty}\vartheta_m=0 $ and $\sup_m\kappa(Y_0\ast \rho_{\vartheta_m}+Z_0)\leq \frac{2}{c_0}$. Denote $Y_{0,m}=Y_0\ast \rho_{\vartheta_m}$.  Then $X_{0,m}=Y_{0,m}+Z_0$ is smooth. By Theorem \ref{localexistence}, there exist $T_1>0$ and a solution $X_m=Y_m+Z_m\in C([0,T_1];\dot C^\frac{3}{2})$. Without loss of generality, let $T_2=\sup\left\{\tau:X_m\in C([0,\tau];\dot C^\frac{3}{2}) \right\}$. Denote  $\kappa_m(t)=\kappa_{X_m}(t)$. 	For any $t\in(0,1)$, Lemma  \ref{propdeltaNY} and Proposition \ref{propparabolicY} yield
			\begin{align}
				&\|Y_m'\|_{\mathcal{G}_t}\leq C\Big(1+\sup_{\tau\in[0,t]}(\|Z'_m(\tau)\|_{L^\infty}^{-1})+\kappa_m(t)\Big)^{5}\nonumber\\
				&\quad\quad\quad\quad\quad\times(1+ \|Z'_m\|_{L^\infty_tL^\infty}+\|Y_m'\|_{\mathcal{G}_t})^{5}\|Y_m'\|_{\mathcal{G}_t}^2+\|Y_{0,m}'\|_{\tilde {\mathcal{G}}_t}.\label{estGY}
			\end{align}
			Moreover, 
			\begin{equation}\label{estGX}
				\|X_m'\|_{\mathcal{G}_t}\leq 	\|Y_m'\|_{\mathcal{G}_t}+Ct^{\varepsilon'}||Z'_m||_{L^\infty_{t}L^\infty},
			\end{equation}
			\begin{equation}\label{linf'}
				||X_m||_{L^\infty_{t}L^\infty}\leq  C\left(1+\kappa_m(t)\right)^{2}(1+\|X_m'\|_{\mathcal{G}_t})^{2}\|X_m'\|_{\mathcal{G}_t}^2+||X_0||_{L^\infty}.
			\end{equation}
			Recalling Proposition \ref{propQT} one has
			\begin{equation}\label{estQX}
				Q_{X_m}(t)\leq  C(1+\kappa_m (t))^4\|X_m'\|_{\mathcal{G}_t}(1+\|X_m'\|_{\mathcal{G}_t})^2.
			\end{equation}
			Since
			$
			\partial_t Z_m=\mathcal{P}\mathfrak{N}(Y_m+Z_m)$, 
			so for any $t\in(0,1)$, there holds
			$$
			\left|	\|Z_m (t)\|_{\dot C^\frac{1}{2}}-	\|Z_0\|_{\dot C^\frac{1}{2}}\right|\lesssim \sup_{\alpha}\frac{\|\delta_\alpha \mathfrak{N}(Y_m +Z_m )\|_{L^2_tL^\infty}}{|\alpha|^\frac{1}{2}}.$$
			Recall that $Z_m =\mathcal{P}X_m \in\mathcal{V}=\operatorname{span}\{e_r,e_t,e_x,e_y\}$ belongs to a finite dimensional space. Hence all the norms are equivalent. Specially, we have $\|Z\|_{\dot C^\sigma}=c_\sigma \|Z'\|_{L^\infty}$ for any $Z\in\mathcal{V}$. Combining this with Proposition \ref{propparabolicY}, we obtain
			\begin{align}\label{estZ}
				&\sup_{\tau\in[0,t]}\left|	\|Z'_m (\tau)\|_{L^\infty}-\|Z_0'\|_{L^\infty}\right|\\
				\leq& C \Big(1+\sup_{\tau\in[0,t]}(\|Z'_m (\tau)\|_{L^\infty}^{-1})+\kappa_m (t)\Big)^{5}\Big(1+ \|Z'_m \|_{L^\infty_tL^\infty}+\|Y_m'\|_{\mathcal{G}_t}\Big)^{5}\|Y_m'\|_{\mathcal{G}_t}^2.\nonumber
			\end{align}	 
			Fix $0<\xi_1\leq (10+\tilde{C}+c_0^{-1})^{-\frac{100}{\varepsilon'}}$ for $\tilde{C}\geq C$ and let  $T_0=\min\{T_2, \xi_1^\frac{2}{3}\}$. Define
			\begin{align*}
				\tilde T_0=\sup&\Big\{t:t\leq T_0, \|Y_m'\|_{\mathcal{G}_t}\leq 3\xi_1,  \kappa_m (t)\leq 5c_0^{-1},\\
				&\quad\quad\quad\quad\quad\quad\sup_{\tau\in[0,t]}\left|\|Z'_m (\tau)\|_{L^\infty}-\|Z_0'\|_{L^\infty}\right|\leq 2\xi_1\Big\}.
			\end{align*}
			Since
			$
			\kappa_m (0)\leq2c_0^{-1}
			$, we have $\tilde T_0>0$.	We want to prove that $	\tilde T_0=T_0$. By the standard bootstrap argument, it suffices to prove for any $t\in[0,\tilde T_0]$ 
			\begin{equation}\label{goal2}
				\|Y_m'\|_{\mathcal{G}_t}\leq 2\xi_1, ~\kappa_m (t)\leq 4c_0^{-1},~ \sup_{\tau\in[0,t]}\left|\|Z'_m (\tau)\|_{L^\infty}-\|Z_0'\|_{L^\infty}\right|\leq \xi_1.
			\end{equation}
			Note that $\|Y_{0,m}'\|_{\tilde {\mathcal{G}}_t}\leq \frac{3}{2}\|Y_{0}'\|_{\tilde {\mathcal{G}}}\leq \frac{3}{2}\xi_1$. Then \eqref{estGY} implies
			$$
			\|Y_m'\|_{\mathcal{G}_t}\leq9 C^3\left(2+7c_0^{-1}\right)^{5}(1+2c_0^{-1}+3\xi_1)^{5}\xi_1^2+\xi_1\leq 2\xi_1,~~~\forall t\in[0,\tilde T_0].
			$$
			By  \eqref{estZ} and using the fact that $\|Z_0'\|_{L^\infty} \in [c_0,\frac{1}{c_0}]$, we have for any $t\in[0,\tilde T_0]$ 
			\begin{align*}
				\sup_{\tau\in[0,t]}\left|	\|Z'_m (\tau)\|_{L^\infty}-\|Z_0'\|_{L^\infty}\right|\leq 9C^3\left(2+7c_0^{-1}\right)^{5}(1+2c_0^{-1}+3\xi_1)^{5}\xi_1^2\leq \xi_1.
			\end{align*}
			It remains to estimate $\kappa_m (t)$. By \eqref{estGX}
			and \eqref{estQX}, we have for any $t\in[0,\tilde T_0]$ 
			\begin{align*}
				\quad Q_{X_m} (t)&\leq C (1+\kappa_m (t))^4\left(\|Y_m'\|_{\mathcal{G}_t}+Ct^{\varepsilon'}||Z'_m||_{L^\infty_{t}L^\infty}\right)\left(1+\|Y_m'\|_{\mathcal{G}_t}+C||Z'_m||_{L^\infty_{t}L^\infty}\right)^2\\
				&\leq C\left(1+5c_0^{-1}\right)^4\left(3\xi_1+Ct^{\varepsilon'}(1+c_0^{-1})\right)\left(1+C+3\xi_1+Cc_0^{-1}
				\right)^{2}\leq \xi_1^{\frac{\varepsilon'}{2}}.
			\end{align*}
			Moreover, by \eqref{smoresultY} we have for any $t\in[0,\tilde T_0]$ and $\varepsilon_1\in [0,\varepsilon']$
			\begin{align}\nonumber
				\|Y_m (t)\|_{\dot C^{\frac{3}{2}+\varepsilon_1}}&\leq C_1 t^{-\frac{1}{2}-\varepsilon_1} \Big\{\Big(1+ \|Z'\|_{L^\infty_tL^\infty}+\|Y'\|_{\mathcal{G}_t}\Big)^{5}\|Y'\|_{\mathcal{G}_t}^2\\
				&\quad\quad\quad\quad\quad\quad\times\Big(1+\kappa(t)+\sup_{\tau\in[0,t]}(\|Z'(\tau)\|_{L^\infty}^{-1})\Big)^{5}+\|Y_0'\|_{\tilde {\mathcal{G}}_t}\Big\}\nonumber\\
				&\leq 2C_1\xi_1t^{-\frac{1}{2}-\varepsilon_1}.\label{esym}
			\end{align}
			This leads to 
			\begin{align*}
				\|X_m (t)\|_{\dot C^\frac{3}{2}}\leq 2C_1\xi_1t^{-\frac{1}{2}}+C_2\|Z_0'\|_{L^\infty}\leq \xi_1^{\frac{\varepsilon'}{2}}t^{-\frac{1}{2}},
			\end{align*}
			for any $t\in[0,\tilde T_0]$, 	provided $\tilde{C}\geq C+C_1+C_2$.	Hence we can apply Lemma \ref{lemkappa} with $\varepsilon=\xi_1^{\frac{\varepsilon'}{2}}$, which yields
			\begin{align*}
				\kappa_m (t)\leq 2\kappa_m (0)\leq 4c_0^{-1},
			\end{align*}
			which completes the proof of \eqref{goal2}. We obtain $\tilde T_0=T_0$.\medskip\\
			We claim that $T_0=\xi_1^\frac{2}{3}$. If this is not true, then $T_0=T_2$. One has $\kappa_m (T_2)\leq 4c_0^{-1}$. Moreover, by the smoothing effect \eqref{smoresultY}, we have
			$$\|Y_m (T_2)\|_{\dot C^{\frac{3}{2}+\varepsilon'}}\leq 2C_1\xi_1T_2^{-\frac{1}{2}-\varepsilon'}.$$ Then $\|X_m (T_2)\|_{\dot C^{\frac{3}{2}+\varepsilon'}}\overset{\eqref{goal2}}\lesssim T_2^{-\frac{1}{2}-\varepsilon'}+c_0^{-1}+1$.
			Applying Theorem \ref{localexistence}, there exists $\delta>0$ such that $X_m \in C([0,T_2+\delta];\dot C^\frac{3}{2})$. This contradicts the definition of $T_2$. Hence we have $T_0=\xi_1^\frac{2}{3}$ and $X_m \in C([0,T_0];\dot C^\frac{3}{2})$. Note that $T_0$ is independent of $m $. Moreover, by \eqref{linf'} one has, $||X_m||_{L^\infty_{T_0}L^\infty}\lesssim ||X_0||_{L^\infty}+1$.  By standard compactness argument, we are able to pass to the limit $m \rightarrow\infty$ to get a solution $X\in C((0,T_0],\dot C^\frac{3}{2})$ of the Cauchy problem \eqref{peskin} with initial data $X_0$. The solution also satisfies the above estimates. Then we obtain \textbf{1)} and \textbf{2)} in Theorem \ref{thmglobal}.	Moreover, by \eqref{esym}
			\begin{align*}
				\|Y(T_0)\|_{\dot C^{\frac{3}{2}+\varepsilon'}}\leq 2C_1\xi_1T_0^{-\frac{1}{2}-\varepsilon'}\leq \xi_1^\frac{1}{4}.
			\end{align*}
			We can further choose $\xi_1$ such that $\xi_1^\frac{1}{4}\leq \rho_0\|Z_0'\|_{L^\infty}$. Applying Theorem \ref{globallemma} one gets \textbf{3)} in Theorem \ref{thmglobal}. This completes the proof of Theorem \ref{thmglobal}. 
		\end{proof}	\vspace{0.3cm}\\
	
		\begin{proof}[Proof of Proposition \ref{Uniqueness}]
			Denote $W=X-\bar X$, we have 
			\begin{align*}
				\partial_t W+\frac{1}{4}\Lambda W&=\sum \int \left(H(\tilde\Delta_\alpha X)E^\alpha X_{i} \delta_\alpha X'_{j}-H(\Delta_\alpha \bar X)E^\alpha \bar X_{i} \delta_\alpha\bar X'_{j}\right)\frac{d\alpha}{\tilde\alpha}:=\tilde N(X,\bar X).
			\end{align*}
			We have 
			\begin{align*}
				\tilde N(X,\bar X)(t,s)=&\int H(\tilde\Delta_\alpha X(s))E^\alpha X_i(s)\delta_\alpha W'_j(s)\frac{d\alpha}{\alpha}+\int H(\tilde\Delta_\alpha \bar X(s))E^\alpha W_i(s) \delta_\alpha \bar X_j'(s)\frac{d\alpha}{\alpha}\\
				&+\int [H(\tilde\Delta_\alpha X)-H(\tilde\Delta_\alpha \bar X)]E^\alpha \bar X_i(s)\delta_\alpha \bar X'_j(s) \frac{d\alpha}{\alpha}.
			\end{align*}
			Following the estimates in Proposition \ref{propparabolicY}, we obtain for any $0<t<T$,
			\begin{align*}
				&\sup_{0\leq\mu\leq\frac{2}{3}\atop\theta-\varepsilon'\leq \mu+a\leq1-\varepsilon'}\sup_\beta \frac{\|t^\mu\delta_\beta \tilde N\|_{L^\frac{1}{a}_TL^\infty}}{|\beta|^{\mu+a}}\\
				&\quad\lesssim\left(1+\kappa_X(T)+\kappa_{\bar X}(T)\right)^{2}\|W'\|_{\mathcal{G}_T}(\|X'\|_{\mathcal{G}_T}+\|\bar X'\|_{\mathcal{G}_T})(1+\|X'\|_{\mathcal{G}_T}+\|\bar X'\|_{\mathcal{G}_T})^5.
			\end{align*}
		Hence one has
			\begin{align*}
				\|W'\|_{\mathcal{G}_T}\leq& \|W_0'\|_{\tilde{\mathcal{G}}_T}+C\|W'\|_{\mathcal{G}_T}\left(1+\kappa_X(T)+\kappa_{\bar X}(T)\right)^{2}\\
				&\quad\quad\quad\quad\quad\times(\|X'\|_{\mathcal{G}_T}+\|\bar X'\|_{\mathcal{G}_T})(1+\|X'\|_{\mathcal{G}_T}+\|\bar X'\|_{\mathcal{G}_T})^5.
			\end{align*}
			By $\left(1+\kappa_X(T)+\kappa_{\bar X}(T)\right)^{2}(\|X'\|_{\mathcal{G}_T}+\|\bar X'\|_{\mathcal{G}_T})(1+\|X'\|_{\mathcal{G}_T}+\|\bar X'\|_{\mathcal{G}_T})^5\ll 1$, we obtain the result.
		\end{proof}\vspace{0.1cm}
		\section{Appendix}
	We will  prove Lemma \ref{lemI12}. First, we need the following lemma: 
		\begin{lemma}	\label{lemnew} 
			Denote $$\Phi_0(x)=\frac{1}{|x|^{\sigma_1}}\mathbf{1}_{|x|\leq1}+\frac{1}{|x|^{\sigma_2}}\mathbf{1}_{|x|>1}$$
			for some $\sigma_1\in(-1,1)\backslash\{0\}, \sigma_2\in (0,1)$.
			Let $\Phi:\mathbb{R}\to (0,\infty) $ satisfy 
			\begin{align}\label{condition}
				\sum_{m=0}^3	|x|^m\left|\frac{d^m}{dx^m} \Phi(x)\right|\lesssim \Phi_0(x), ~~~\forall x\in\mathbb{R}\backslash\{0\}.
			\end{align}
			Then there holds
			$$
			|(\mathcal{F}\Phi)(\xi)|+	|\xi|\left|\frac{d}{d\xi}(\mathcal{F}\Phi)(\xi)\right|\lesssim  \frac{\Phi_0(1/|\xi|)}{|\xi|}.$$
		\end{lemma}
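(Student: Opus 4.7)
The plan is a standard splitting of the integral defining $\mathcal{F}\Phi(\xi)$ at the scale $R := 1/|\xi|$, combined with repeated integration by parts on the tail. First I would write
\[
\mathcal{F}\Phi(\xi)=\int_{|x|\leq R}e^{-i\xi x}\Phi(x)\,dx+\int_{|x|>R}e^{-i\xi x}\Phi(x)\,dx =: I_{\mathrm{near}}(\xi)+I_{\mathrm{far}}(\xi).
\]
For $I_{\mathrm{near}}$ I would ignore the oscillation and use $|\Phi|\lesssim\Phi_0$ from \eqref{condition}, so that $|I_{\mathrm{near}}(\xi)|\lesssim\int_{|x|\leq R}\Phi_0(x)\,dx$. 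Evaluating this with the split definition of $\Phi_0$, one gets $\sim R^{1-\sigma_1}$ when $R\leq 1$ and $\sim R^{1-\sigma_2}$ when $R>1$ (the latter relies on $\sigma_2<1$), which in both cases equals $\Phi_0(R)/|\xi|$ up to a constant. The hypothesis $\sigma_1<1$ keeps the small-$|x|$ integral finite.

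For $I_{\mathrm{far}}$ I would integrate by parts twice, using $e^{-i\xi x}=\frac{-1}{i\xi}\partial_x e^{-i\xi x}$. The boundary contribution at infinity vanishes thanks to $\sigma_2>0$ (so $\Phi$ and $\Phi'$ decay), and the boundary contributions at $|x|=R$ are controlled by $|\Phi(\pm R)|/|\xi|\lesssim\Phi_0(R)/|\xi|$ and $|\Phi'(\pm R)|/\xi^{2}\lesssim R^{-1}\Phi_0(R)/\xi^{2}=\Phi_0(R)/|\xi|$, both matching the target. What remains is $\xi^{-2}\int_{|x|>R}|\Phi''(x)|\,dx$, which by \eqref{condition} is bounded by $\xi^{-2}\int_{|x|>R}\Phi_0(x)/|x|^{2}\,dx$; a direct computation splitting at $|x|=1$ (when $R<1$) and using $\sigma_1>-1$, $\sigma_2<1$ gives again $\Phi_0(R)/|\xi|$ after accounting for the harmless lower-order term $|\xi|^{-3}$ which is dominated by $|\xi|^{\sigma_1-2}$ precisely because $\sigma_1>-1$.

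For the derivative bound, I would use $\frac{d}{d\xi}\mathcal{F}\Phi(\xi)=-i\mathcal{F}(x\Phi)(\xi)$ and repeat the same scheme with $\Psi(x):=x\Phi(x)$. The Leibniz rule and \eqref{condition} give $|x|^{m}|\Psi^{(m)}(x)|\lesssim|x|\,\Phi_0(x)$ for $m=0,1,2,3$, so $\Psi$ satisfies an analogous Sobolev-type bound but with $\tilde\Phi_0(x)=|x|\Phi_0(x)$. Now the near part contributes $\int_{|x|\leq R}|x|\Phi_0(x)\,dx\sim R\Phi_0(R)=R^{2}\Phi_0(R)/R=\Phi_0(R)/\xi^{2}$, which after multiplication by $|\xi|$ is exactly the target. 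On the far part the boundary terms of the first two integrations by parts decay at infinity (we need three derivatives here, which is why \eqref{condition} is stated up to $m=3$), and the remaining integral $\xi^{-3}\int_{|x|>R}|\Psi'''(x)|\,dx\lesssim\xi^{-3}\int_{|x|>R}\Phi_0(x)/|x|^{2}\,dx$ is handled exactly as in the previous step.

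The main (though modest) obstacle will be the bookkeeping of the case $R>1$ versus $R\leq 1$ and keeping track of which of the exponents $\sigma_1,\sigma_2$ is active in each integral. The inequalities $\sigma_1\in(-1,1)\setminus\{0\}$ and $\sigma_2\in(0,1)$ are each used in specific places: $\sigma_1>-1$ to control $\int_R^{1}|x|^{-\sigma_1-2}\,dx$ against $R^{-\sigma_1-1}$; $\sigma_1<1$ so that the unweighted near integral converges; $\sigma_2>0$ to make boundary terms at infinity vanish and to majorize the constant coming from $\int_{1}^{\infty}|x|^{-\sigma_2-2}\,dx$; and $\sigma_2<1$ so that $\int_{1\leq|x|\leq R}|x|^{1-\sigma_2}\,dx$ is controlled by its upper endpoint. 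Once these are verified the proof is a routine calculation.
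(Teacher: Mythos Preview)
Your approach is correct and leads to the same conclusion, but it is genuinely different from the paper's argument. The paper avoids boundary terms entirely by using a smooth dyadic partition of unity $1=\sum_{n\in\mathbb Z}\chi_n$ with $\operatorname{supp}\chi_n\subset\{|x|\sim 2^{-n}\}$: on each compactly supported piece one integrates by parts $k=0,1,2$ times to get
\[
\Bigl|\int\chi_n\Phi\,e^{-ix\xi}\,dx\Bigr|\lesssim 2^{-n}\Phi_0(2^{-n})\min\{1,2^{n}/|\xi|\}^2,
\]
and then sums the dyadic pieces; the derivative bound is obtained ``similarly'' by inserting the extra factor $x\sim 2^{-n}$. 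Your single split at $R=1/|\xi|$ is more elementary (no partition of unity), at the cost of having to track the boundary contributions at $|x|=R$ and at infinity, and to handle the four regimes $(R\lessgtr 1)\times(|x|\lessgtr 1)$ by hand. Both methods exploit exactly the same structural ingredients: the pointwise bound \eqref{condition} and the fact that the exponents $\sigma_1,\sigma_2$ sit in the open interval that makes the relevant power integrals finite.

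Two minor remarks on your writeup. First, in the paragraph on $I_{\mathrm{far}}$ for $\mathcal F\Phi$, the ``lower-order term'' coming from $\int_{1}^{\infty}$ is $\xi^{-2}$, not $\xi^{-3}$, and the comparison is with $\Phi_0(R)/|\xi|=|\xi|^{\sigma_1-1}$ (not $|\xi|^{\sigma_1-2}$); the inequality $\sigma_1>-1$ is still exactly what makes it work, so this is only a typographical slip. Second, two integrations by parts actually suffice also in the derivative step: the bulk $\xi^{-2}\int_{|x|>R}|x|^{-1}\Phi_0(x)\,dx$ is $\sim R^{2-\sigma_1}$ in both cases $\sigma_1\gtrless 0$ (when $\sigma_1>0$ the constant from $\int_1^\infty$ is absorbed by $R^{-\sigma_1}>1$; when $\sigma_1<0$ one uses $R^{2}\le R^{2-\sigma_1}$). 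Your choice of three integrations by parts is harmless and is covered by the hypothesis $m\le 3$, but it is not strictly required.
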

		\begin{proof} We use the idea in \cite[Proof of Lemma 3.8]{HungNguyen}.
			Let $1=\sum_{n=-\infty}^{+\infty}\chi_n(x)$ be the standard partition of unity in $\mathbb{R}\backslash\{0\}$, where  $\text{supp} \chi_n \subset \{|x|\sim 2^{-n}\}$ and $|\frac{d^m}{dx^m}\chi_n(x)|\lesssim 2^{nm}$ for any $m\in \mathbb{N}$. Then we have
			$$
			(\mathcal{F}\Phi)(\xi)=\sum_{n=-\infty}^{+\infty}\int \chi_n(x) \Phi(x)e^{-ix\xi} dx.$$
			By \eqref{condition}, integrate by parts one obtains
			\begin{align*}
				|\xi|^k	\left|\int \chi_n \Phi(x)e^{-ix\xi} dx\right|\lesssim  \int |\partial^k(\chi_n \Phi)| dx\lesssim 2^{(k-1)n}\Phi_0(2^{-n}),
			\end{align*}
			for $k=0,1,2$.
			Hence we obtain
			\begin{equation*}
				\left|\int \chi_n \Phi(x)e^{-ix\xi} dx\right|\lesssim \Phi_0(2^{-n})2^{-n}\min\left\{1,\frac{2^{n}}{|\xi|}\right\}^2.
			\end{equation*}
			Then by the definition of $\Phi_0$, it is easy to check that$$
			|\mathcal{F} \Phi(\xi)|\lesssim \sum_{n=-\infty}^\infty \Phi_0(2^{-n})2^{-n}\min\left\{1,\frac{2^{n}}{|\xi|}\right\}^2\lesssim  \frac{\Phi_0(1/|\xi|)}{|\xi|}.$$
			Similarly, we have 
			$$
			\left|\frac{d}{d\xi}(\mathcal{F}\Phi)(\xi)\right|\lesssim  \frac{\Phi_0(1/|\xi|)}{|\xi|^2},$$
			this completes the proof.
		\end{proof}	\vspace{0.3cm}\\
		\begin{proof}[Proof of Lemma \ref{lemI12}]
			Let $\phi(\xi)=\phi(|\xi|)$ be a smooth  function in $\mathbb{R}\backslash\{0\}$ satisfying 
			\begin{align*}
				\phi(|\xi|)=\begin{cases}
					|\xi|^{1-\sigma-\varepsilon},~~~|\xi|\leq 1,\\
					|\xi|^{1-\sigma+\varepsilon},~~~|\xi|\geq 2.
				\end{cases}
			\end{align*}
			Let $g_1=\Lambda^\phi g$, where $\Lambda^\phi$ is the Fourier multiplier with
			symbol $\phi(\xi)$, i.e. $(\mathcal{F}g_1)(\xi)=\phi(\xi)(\mathcal{F}g)(\xi)$. Then we have 
			$g=G\ast g_1,$
			where $G=\frac{1}{(2\pi)^2}\mathcal{F}\left(\frac{1}{\phi}\right)$. We can choose $\phi$ such that the function $\frac{1}{\phi}$ satisfies condition \eqref{condition} in Lemma \ref{lemnew} with $\Phi=\Phi_0=\frac{1}{\phi}$. Applying Lemma \ref{lemnew} we obtain 
			$$
			\left|G(x)\right|+|x|\left|\frac{d}{dx}G(x)\right|\lesssim \frac{1}{\phi(1/|x|)|x|},$$
			which implies 
			\begin{align}\label{estofG}
				\left|\frac{d^k}{dx^k}G(x)\right|\lesssim\min\left\{|x|^{2\varepsilon},1\right\}|x|^{-k-\sigma-\varepsilon}, ~k=0,1.
			\end{align}
			Let  $f_1=\Lambda^\sigma f$,  then $f=\frac{c_\sigma}{|.|^{1-\sigma}}\ast f_1$. 
			We have 
			\begin{align}
				\label{1}	&\left|\int f(\alpha)\partial_\alpha \tilde g(\alpha)d\alpha\right|\lesssim \left|\iint f_1(z) \partial_\alpha\left(\frac{1}{|\alpha-z|^{1-\sigma}}\right)\Delta_\alpha (G\ast g_1)(0)d\alpha dz\right|\\
				&\lesssim ||f_1||_{L^\infty} ||g_1||_{L^\infty}\iint \left|	\int \partial_\alpha\left(\frac{1}{|\alpha-z|^{1-\sigma}}\right)\left(G(\alpha-y)-G(y)\right)\frac{d\alpha}{\alpha}\right|dydz.\nonumber
			\end{align}
			We first prove that 
			\begin{align}\label{estM}
				E=	\iint \left|	\int \partial_\alpha\left(\frac{1}{|\alpha-z|^{1-\sigma}}\right)\left(G(|\alpha-y|)-G(|y|)\right)\frac{d\alpha}{\alpha}\right|dydz<\infty.
			\end{align}
			Let $\chi(x)$ be a smooth positive symmetric function such that $\mathbf{1}_{|x|\leq 1/2}\leq \chi\leq \mathbf{1}_{|x|\leq 1}$. Then
			\begin{align*}
				1&=\chi(4|\alpha|/|z|)+\left(\chi(|\alpha|/(4|z|))-\chi(4|\alpha|/|z|)\right)+\left(1-\chi(|\alpha|/(4|z|))\right)\\
				&:=\chi_1+\chi_2+\chi_3,\\
				1&=[\chi(|\alpha|/(4|y|))-\chi(4|\alpha|/|y|)]+[1-\chi(|\alpha|/(4|y|))+\chi(4|\alpha|/|y|)]\\
				&:=\psi_1+\psi_2.
			\end{align*}
			One has
			\begin{align*}
				E_1&=	\iint \left|	\int \chi_1\partial_\alpha\left(\frac{1}{|\alpha-z|^{1-\sigma}}\right)\left(G(|\alpha-y|)-G(|y|)\right)\frac{d\alpha}{\alpha}\right|dydz\\&\leq   	\iiint \mathbf{1}_{|\alpha|\leq |z|/2} \frac{1}{|z|^{2-\sigma}}|G(|\alpha-y|)-G(|y|)|\frac{d\alpha dydz}{|\alpha|}.
			\end{align*}
			Note that 
			\begin{align}
				|G(\alpha-y)-G(y)|\lesssim&\min\left\{
				\frac{|\alpha|}{|y|},1\right\}\min\left\{|y|^{2\varepsilon},1\right\}|y|^{-\sigma-\varepsilon}\nonumber\\
				&\quad+\mathbf{1}_{|\alpha-y|\leq \frac{1}{2} |y|}\min\left\{|\alpha-y|^{2\varepsilon},1\right\}|\alpha-y|^{-\sigma-\varepsilon}.\label{z1}
			\end{align}
			Thus, we have
			\begin{align*}
				E_1&\lesssim   	\iiint_{|\alpha|\leq |z|/2} \frac{1}{|z|^{2-\sigma}}\min\left\{
				\frac{1}{|y|},\frac{1}{|\alpha|}\right\}\min\left\{|y|^{2\varepsilon},1\right\}|y|^{-\sigma-\varepsilon}d\alpha dydz\\
				&\quad\quad\quad\quad+ 	\iiint\mathbf{1}_{|y|\lesssim |z|} \mathbf{1}_{|\alpha|\lesssim  |y|} \frac{1}{|z|^{2-\sigma}}\frac{ 1}{|y|}\min\left\{|\alpha|^{2\varepsilon},1\right\}|\alpha|^{-\sigma-\varepsilon}d\alpha dydz\\
				&\lesssim   	\iint\frac{1}{|\alpha|^{1-\sigma}}\min\left\{
				\frac{1}{|y|},\frac{1}{|\alpha|}\right\}\min\left\{|y|^{2\varepsilon},1\right\}|y|^{-\sigma-\varepsilon} d\alpha dy\\
				&\quad\quad\quad\quad+ 	\iint_{|y|\lesssim |z|}\frac{1}{|z|^{2-\sigma}}\min\left\{|y|^{2\varepsilon},1\right\}|y|^{-\sigma-\varepsilon} dydz\\
				&\lesssim   	\int \min\left\{|y|^{2\varepsilon},1\right\} \frac{dy}{|y|^{1+\varepsilon}}\lesssim 1.
			\end{align*}
			Moreover, integrate by parts  we have
			\begin{align*}
				E_2&=	\iint \left|	\int \chi_2\partial_\alpha\left( \frac{1}{|\alpha-z|^{1-\sigma}}\right)\left(G(|\alpha-y|)-G(|y|)\right)\frac{d\alpha}{\alpha}\right|dydz\\&\leq   	\iiint   \left|\frac{1}{|\alpha-z|^{1-\sigma}} -\frac{1}{|y-z|^{1-\sigma}}\right| \left|\partial_\alpha\left[\frac{\chi_2\psi_1}{\alpha}\left(G(|\alpha-y|)-G(|y|)\right)\right]\right|d\alpha dydz\\&\quad\quad+	\iiint  \frac{1}{|\alpha-z|^{1-\sigma}} \left|\partial_\alpha\left[\frac{\chi_2\psi_2}{\alpha}\left(G(|\alpha-y|)-G(|y|)\right)\right]\right|d\alpha dydz\\ &:=E_{2,1}+E_{2,2}.
			\end{align*}
			By \eqref{estofG} we have
			\begin{align*}
				&\left|\partial_\alpha\left[\frac{\chi_2\psi_1}{\alpha}\left(G(|\alpha-y|)-G(|y|)\right)\right]\right|\lesssim \frac{\mathbf{1}_{|\alpha|\sim |z|\sim |y|}}{|z||\alpha-y|^{1+\sigma+\varepsilon}}\min\left\{|\alpha-y|^{2\varepsilon},1\right\},\\
				&\left|\partial_\alpha\left[\frac{\chi_2\psi_2}{\alpha}\left(G(|\alpha-y|)-G(|y|)\right)\right]\right|\lesssim \frac{\mathbf{1}_{|\alpha|\sim |z|}}{|z|}\frac{|y|^{-\sigma-\varepsilon}}{|y|+|\alpha|}\min\left\{|y|^{2\varepsilon},1\right\}.
			\end{align*}
			Moreover, there holds for any $\sigma,\gamma\in(0,1)$ $$
			\left|\frac{1}{|\alpha-z|^{1-\sigma}} -\frac{1}{|y-z|^{1-\sigma}}\right|\lesssim |\alpha-y|^\gamma\left(\frac{1}{|\alpha-z|^{1-\sigma+\gamma}}+\frac{1}{|y-z|^{1-\sigma+\gamma}}\right).$$
			Let $\gamma>0$ such that $\sigma-\varepsilon<\gamma<\sigma$. Denote $\gamma_1=1-\sigma+\gamma$, we have
			$$
			E_{2,1}\lesssim  	\iiint_{|\alpha|\sim |z|\sim |y|}  \left(\frac{1 }{|\alpha-z|^{\gamma_1}}+\frac{1 }{|y-z|^{\gamma_1}}\right) \frac{\min\left\{|\alpha-y|^{2\varepsilon},1\right\}}{|\alpha-y|^{1+\sigma+\varepsilon-\gamma}}\frac{d\alpha dydz}{|z|}.$$
			Hence 
			\begin{align*}
				E_{2,1}&\lesssim  	\iint_{|z|\sim |y|} \min\{|z|^{2\varepsilon},1\}\frac{dydz}{|z|^{2+\varepsilon}}+	\iint_{ |z|\sim |y|}  \frac{\min\{|z|^{2\varepsilon},1\}}{|y-z|^{\gamma_1}} \frac{ dydz}{|z|^{1-\gamma+\sigma+\varepsilon}}\\
				&\lesssim\int \min\{|z|^{2\varepsilon},1\}\frac{dz}{|z|^{1+\varepsilon}}+\iint_{ |y|\lesssim |z|}  \frac{1}{|y|^{\gamma_1}} \min\{|z|^{2\varepsilon},1\}\frac{ dydz}{|z|^{1-\gamma+\sigma+\varepsilon}}\\
				&\lesssim \int \min\{|z|^{2\varepsilon},1\}\frac{dz}{|z|^{1+\varepsilon}}
				\lesssim  1.
			\end{align*}
			Moreover, we have
			\begin{align*}
				E_{2,2}&\lesssim	\iiint   \frac{\min\left\{|y|^{2\varepsilon},1\right\}}{|\alpha-z|^{1-\sigma}} \frac{\mathbf{1}_{|\alpha|\sim |z|}}{|z|}\frac{|y|^{-\sigma-\varepsilon}}{|y|+|z|}d\alpha dydz\\
				&\lesssim \iint   \frac{\min\left\{|y|^{2\varepsilon},1\right\}}{|z|^{1-\sigma}}\frac{|y|^{-\sigma-\varepsilon}}{|y|+|z|} dydz\lesssim \int \min\left\{|y|^{2\varepsilon},1\right\} \frac{dy}{|y|^{1+\varepsilon}}\lesssim1.
			\end{align*}
			Finally we estimate
			\begin{align*}
				E_3&=	\iint \left|\int \chi_3\partial_\alpha \left(\frac{1}{|\alpha-z|^{1-\sigma}}\right)\left(G(|\alpha-y|)-G(|y|)\right)\frac{d\alpha}{\alpha}\right|dydz
				\\&\lesssim 	\iiint_{|\alpha|\geq2|z|} \frac{1}{|\alpha|^{3-\sigma}}|G(|\alpha-y|)-G(|y|)|d\alpha dydz.
			\end{align*}
			By \eqref{z1} we have 
			\begin{align*}
				E_3\lesssim &	\iiint_{|\alpha|>2|z|}\min\left\{
				\frac{|\alpha|}{|y|},1\right\}\min\left\{|y|^{2\varepsilon},1\right\}|y|^{-\sigma-\varepsilon}\frac{d\alpha dy dz}{|\alpha|^{3-\sigma}}\\
				&\quad\quad+\iiint_{|\alpha|>2|z|}\mathbf{1}_{|\alpha-y|\leq \frac{1}{2} |y|}\min\left\{|\alpha-y|^{2\varepsilon},1\right\}|\alpha-y|^{-\sigma-\varepsilon}\frac{d\alpha dy dz}{|\alpha|^{3-\sigma}}\\
				\lesssim &\iint\min\left\{
				\frac{|\alpha|}{|y|},1\right\}\frac{\min\left\{|y|^{2\varepsilon},1\right\}}{|y|^{\sigma+\varepsilon}}\frac{d\alpha dy }{|\alpha|^{2-\sigma}}+\iint_{|\alpha|\lesssim |y|}\frac{\min\left\{|\alpha|^{2\varepsilon},1\right\}}{|\alpha|^{\sigma+\varepsilon}}\frac{d\alpha dy }{|y|^{2-\sigma}}\\
				\lesssim& \int \min\left\{|y|^{2\varepsilon},1\right\}\frac{dy}{|y|^{1+\varepsilon}}\lesssim 1.
			\end{align*}
			This  completes the proof of \eqref{estM}.\medskip\\ We claim that  
			\begin{align}\label{zzzz}
				\|g_1\|_{L^\infty}\lesssim \|\Lambda^{1-\sigma+2\varepsilon}g\|_{L^\infty}+\|\Lambda^{1-\sigma-2\varepsilon}g\|_{L^\infty}.
			\end{align}
			To prove this, observe that 
			$$
			\phi(\xi)=\frac{\chi(\xi)\phi(\xi)}{|\xi|^{1-\sigma-2\varepsilon}}|\xi|^{1-\sigma-2\varepsilon}+\frac{(1-\chi(\xi))\phi(\xi)}{|\xi|^{1-\sigma+2\varepsilon}}|\xi|^{1-\sigma+2\varepsilon}.$$
			Denote $ \phi_1(\xi)=\frac{\chi(\xi)\phi(\xi)}{|\xi|^{1-\sigma-2\varepsilon}}$ and $ \phi_2(\xi)=\frac{(1-\chi(\xi))\phi(\xi)}{|\xi|^{1-\sigma+2\varepsilon}}$. There holds
			\begin{align*}
				\mathcal{F}g_1(\xi)=\phi(\xi)\mathcal{F}g(\xi)= \phi_1(\xi)|\xi|^{1-\sigma-2\varepsilon}\mathcal{F}g(\xi)+ \phi_2(\xi)|\xi|^{1-\sigma+2\varepsilon}\mathcal{F}g(\xi).
			\end{align*}
			Hence 
			\begin{align}\label{zxc}
				\|g_1\|_{L^\infty}\lesssim \|\mathcal{F}\phi_1\|_{L^1}\|\Lambda^{1-\sigma-2\varepsilon}g\|_{L^\infty}+\|\mathcal{F}\phi_2\|_{L^1}\|\Lambda^{1-\sigma+2\varepsilon}g\|_{L^\infty}.
			\end{align}
			Set $$
			\phi_0(\xi)=|\xi|^\varepsilon\mathbf{1}_{|\xi|\leq 1}+|\xi|^{-\varepsilon}\mathbf{1}_{|\xi|> 1}.
			$$
			Then we have 
			$$
			\sum_{m=0}^3	|\xi|^m\left|\frac{d^m}{d\xi^m} \phi_1(\xi)\right|+\sum_{m=0}^3	|\xi|^m\left|\frac{d^m}{d\xi^m} \phi_2(\xi)\right|\lesssim \phi_0(\xi), ~~~\forall \xi\in\mathbb{R}\backslash\{0\}.$$
			By Lemma \ref{lemnew} we obtain
			\begin{align*}
				|(\mathcal{F}\phi_k)(x)|\lesssim \frac{\phi_0(1/|x|)}{|x|}, ~~~~k=1,2.
			\end{align*}
			Then we obtain
			\begin{align*}
				&\|\mathcal{F}\phi_1\|_{L^1}+\|\mathcal{F}\phi_2\|_{L^1}\lesssim \int_{|x|\geq 1}\frac{dx}{|x|^{1+\varepsilon}}+ \int_{|x|\leq 1}\frac{dx}{|x|^{1-\varepsilon}}\lesssim 1.
			\end{align*}
			This together with \eqref{zxc} implies \eqref{zzzz}.
			Combining \eqref{1}, \eqref{estM} and \eqref{zzzz}  we obtain
			\begin{align*}
				\left|\int f(\alpha)\partial_\alpha \tilde g(\alpha)d\alpha\right|\lesssim \|\Lambda^\sigma f\|_{L^\infty}(\|\Lambda^{1-\sigma-2\varepsilon}g\|_{L^\infty}+\|\Lambda^{1-\sigma+2\varepsilon}g\|_{L^\infty}).
			\end{align*}
			For any $\lambda>0$, denote $f_\lambda(\alpha)=f(\lambda\alpha)$ and $g_\lambda(s)=g(\lambda s)$. Then 
			\begin{align*}
				\left|\int f(\alpha)\partial_\alpha \tilde g(\alpha)d\alpha\right|&=\frac{1}{\lambda}\left|\int f_\lambda(\alpha)\partial_\alpha \tilde g_\lambda(\alpha)d\alpha\right|\\
				&\lesssim \frac{1}{\lambda}\|\Lambda^\sigma f_\lambda\|_{L^\infty}(\|\Lambda^{1-\sigma-2\varepsilon}g_\lambda\|_{L^\infty}+\|\Lambda^{1-\sigma+2\varepsilon}g_\lambda\|_{L^\infty}).
			\end{align*}
			It is easy to check that for any $\gamma\in(0,1)$
			$$\|\Lambda^\gamma f_\lambda\|_{L^\infty}= \lambda^\gamma \|\Lambda^\gamma f\|_{L^\infty}.$$
			Hence we get $$
			\left|\int f(\alpha)\partial_\alpha \tilde g(\alpha)d\alpha\right|\lesssim \|\Lambda^\sigma f\|_{L^\infty}\left(\lambda^{-2\varepsilon}\|\Lambda^{1-\sigma-2\varepsilon}g\|_{L^\infty}+\lambda^{2\varepsilon}\|\Lambda^{1-\sigma+2\varepsilon}g\|_{L^\infty}\right).$$
			Take $\lambda=\left(\|\Lambda^{1-\sigma-2\varepsilon}g\|_{L^\infty}\|\Lambda^{1-\sigma+2\varepsilon}g\|_{L^\infty}^{-1}\right)^\frac{1}{4\varepsilon}$, we obtain the result.
		\end{proof}	\vspace{0.3cm}\\
		\begin{lemma}\label{propparabolic} Let  $f(s)=h'(s)$ for some function $h$. 
			Then for any $\eta\in(0,1)$ and $T>0$, we have
			\begin{align*}
				\|f\|_{\tilde {\mathcal{G}}_T}\lesssim\|f-f\ast \rho_\eta\|_{\tilde {\mathcal{G}}_T}+(T^{\varepsilon'}+T)\eta^{-3}||h||_{L^\infty},
			\end{align*}
		where $\rho_\eta(x)=\eta^{-1}\rho(\frac{x}{\eta})$ is the standard mollifier.
		\end{lemma}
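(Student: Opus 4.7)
The strategy is the standard splitting $f=(f-f\ast\rho_\eta)+f\ast\rho_\eta$. By the triangle inequality for $\|\cdot\|_{\tilde{\mathcal{G}}_T}$, it suffices to prove that the mollified piece obeys
$$\|f\ast\rho_\eta\|_{\tilde{\mathcal{G}}_T}\lesssim (T^{\varepsilon'}+T)\eta^{-3}\|h\|_{L^\infty}.$$
The idea is that $g:=f\ast\rho_\eta=(h\ast\rho_\eta)'$ is smooth (with explicit $\eta$-dependent derivative bounds), so we do not need to exploit any subtle property of $f$ itself; we just pay in powers of $\eta^{-1}$.

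Since $f=h'$, writing the derivative on the mollifier gives $\|g^{(k)}\|_{L^\infty}=\|h\ast\rho_\eta^{(k+1)}\|_{L^\infty}\lesssim \eta^{-k-1}\|h\|_{L^\infty}$ for $k=0,1,2$. Interpolation (using \eqref{interpfrac2} or the standard $\|\Lambda^\sigma u\|_{L^\infty}\lesssim\|u\|_{L^\infty}^{1-\sigma}\|u'\|_{L^\infty}^{\sigma}$ for $\sigma\in(0,1)$) gives
$$\|\Lambda^{b-\varepsilon'}g\|_{L^\infty}\lesssim \eta^{-1-(b-\varepsilon')}\|h\|_{L^\infty},\qquad \|\Lambda^{b-\varepsilon'}g'\|_{L^\infty}\lesssim \eta^{-2-(b-\varepsilon')}\|h\|_{L^\infty}.$$
Using $\|K(t,\cdot)\|_{L^1}=1$ and estimating $\delta_\alpha$ either as $2\|\cdot\|_{L^\infty}$ or as $|\alpha|\|\partial_s\cdot\|_{L^\infty}$, I get the pointwise-in-$t$ bound
$$\|\delta_\alpha\Lambda^{b-\varepsilon'}(K(t,\cdot)\ast g)\|_{L^\infty}\lesssim \|h\|_{L^\infty}\min\bigl\{\eta^{-1-(b-\varepsilon')},\,|\alpha|\,\eta^{-2-(b-\varepsilon')}\bigr\}.$$

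Taking the $L^{1/b}_T$ norm of $t^\mu$ times this contributes only $\|t^\mu\|_{L^{1/b}_T}\lesssim T^{\mu+b}$ (since the inner bound is $t$-independent). Dividing by $|\alpha|^{\mu+\varepsilon'}$ and splitting into the two regimes $|\alpha|\le\eta$ and $|\alpha|>\eta$, a short case analysis (in each regime the sup is attained at $|\alpha|=\eta$) yields the uniform bound
$$\sup_\alpha \frac{\|t^\mu\delta_\alpha\Lambda^{b-\varepsilon'}(K\ast g)\|_{L^{1/b}_TL^\infty}}{|\alpha|^{\mu+\varepsilon'}}\lesssim T^{\mu+b}\eta^{-1-\mu-b}\|h\|_{L^\infty}.$$
It remains to check that the admissible range $0\le\mu\le\tfrac{2}{3}$, $2\varepsilon'\le b\le\theta-\mu-\varepsilon'$ yields $T^{\mu+b}\le T^{\varepsilon'}+T$ and $\eta^{-1-\mu-b}\le\eta^{-3}$. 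Indeed $\mu+b\in[2\varepsilon',\theta-\varepsilon']\subset(\varepsilon',1)$, so $T^{\mu+b}\le T^{\varepsilon'}$ if $T\le 1$ and $T^{\mu+b}\le T$ otherwise; moreover $1+\mu+b\le 1+(\theta-\varepsilon')<2$, and since $\eta\in(0,1)$ we have $\eta^{-2}\le\eta^{-3}$.

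Combining these gives the claimed bound on $\|f\ast\rho_\eta\|_{\tilde{\mathcal{G}}_T}$, and the triangle inequality finishes the proof. There is no real obstacle here: the lemma is essentially a quantitative smoothing estimate, and the only thing to watch is the bookkeeping of the exponents of $T$ and $\eta$ across the admissible $(\mu,b)$-range so that the loose bounds $T^{\varepsilon'}+T$ and $\eta^{-3}$ suffice.
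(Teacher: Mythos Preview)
Your proof is correct and follows essentially the same approach as the paper: split $f=(f-f\ast\rho_\eta)+f\ast\rho_\eta$, then bound $\|f\ast\rho_\eta\|_{\tilde{\mathcal{G}}_T}$ by exploiting the smoothness of $h\ast\rho_\eta$ and paying in powers of $\eta^{-1}$, with the time integrability contributing $T^{\mu+b}$. Your argument is in fact more carefully quantified than the paper's (you obtain $\eta^{-1-\mu-b}\le\eta^{-2}$ before relaxing to $\eta^{-3}$, whereas the paper simply asserts the $\eta^{-3}|\alpha|^{\mu+\varepsilon'}$ bound), but the underlying mechanism is identical.
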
 
		\begin{proof}
			For simplicity, denote $f_\eta(t,s)=f\ast\rho_\eta$. 
			Fix $b,\mu$ such that $0\leq\mu\leq\frac{2}{3}$, $ 2\varepsilon'\leq b\leq\theta-\mu-\varepsilon'$.
			We have 
			\begin{align*}
				\|\delta_\alpha \Lambda^{b-\varepsilon'} (K(t,\cdot)\ast f_\eta)\|_{L^\infty}\lesssim \eta^{-3}|\alpha|^{\mu+\varepsilon'}\|h\|_{L^\infty}.
			\end{align*}
		Hence 
			\begin{align*}
				\sup_{\alpha\in\mathbb{R}}\frac{\|t^\mu\delta_\alpha \Lambda^{b-\varepsilon'} (K(t,\cdot)\ast f_\eta)\|_{L_T^\frac{1}{b}L^\infty}}{|\alpha|^{\mu+\varepsilon'}}\lesssim \eta^{-3}T^{\mu+b}\|h\|_{L^\infty},
			\end{align*}
			which implies $\|f\ast \rho_\eta \|_{\tilde {\mathcal{G}}_T}\lesssim (T^{\varepsilon'}+T)\eta^{-3}||h||_{L^\infty}$. We complete the proof.
		\end{proof}	\vspace{0.3cm}\\
	For initial data $X_0\in  (C^2)^{\dot B^1_{\infty,\infty}}\cap L^\infty$, we have $\lim_{\eta\to 0}\|X_0'-X_0'\ast\rho_\eta\|_{\tilde{\mathcal{G}}}=0$.
	For any $\xi_0>0$, we can choose $\eta$ small enough and $T^*=T^*(\eta, \|X_0\|_{L^\infty})$ small enough such that 
$$	\|X_0'-X_0'\ast\rho_\eta\|_{\tilde{\mathcal{G}}}\leq \frac{\xi_0}{3}, \ \ \ \text{and}\ \ \  (T^*)^{\varepsilon'}\eta^{-3}||X_0||_{L^\infty}\leq \frac{\xi_0}{3}.
$$		By Lemma \ref{propparabolic},   we have $\|X_0'\|_{\tilde{\mathcal{G}}_{T^*}}\leq \xi_0$. Applying this to Theorem \ref{thmlocal} we obtain Corollary \ref{corvmo}.
		\vspace{0.5cm}\\
		\noindent	\textbf{Acknowledgments:} This project is supported by the ShanghaiTech University startup fund,Academy of Mathematics and Systems Science,
Chinese Academy of Sciences startup fund, and the National Natural Science Foundation of China (12050410257).
		
	\end{document}